\newtheorem{thm}{Theorem} [section]
\newtheorem*{thm*}{Theorem}
\newtheorem{lem}[thm]{Lemma}
 \newtheorem{defn}[thm]{Definition}
 \theoremstyle{remark}
\newtheorem{remark}[thm]{Remark}
\newtheorem{remark*}[thm]{Remark}
\newtheorem{acknowledgment*}[thm] {Acknowledgment}
\newcommand{\thmref}[1]{Theorem~\ref{#1}}
\newcommand{\lemref}[1]{Lemma~\ref{#1}}
 \renewcommand{\sectionmark}[1]{}
\newcommand{\doe}{\overset{\text{def}}{=}}
 \newcommand{\supp} {\operatorname{supp}}
\newcommand{\loc} {\operatorname{loc}}
 \date{}
\begin{document}

\title{Spaces Admissible for
 the Sturm-Liouville Equation}
\author[N.A. Chernyavskaya]{N.A. Chernyavskaya}\address{Department of Mathematics, Ben-Gurion University of the
Negev, P.O.B. 653, Beer-Sheva, 84105, Israel}
\email{nina@math.bgu.ac.il}
\author[L.A. Shuster]{L.A. Shuster}
 \address{Department of Mathematics,
 Bar-Ilan University, 52900 Ramat Gan, Israel}
 \email{miriam@macs.biu.ac.il}

\begin{abstract}
We consider  the equation
\begin{equation}\label{1}
-y''(x)+q(x)y(x)=f(x),\quad x\in \mathbb R
\end{equation}
where $f\in L_p^{\loc}(\mathbb R),$ $p\in[1,\infty)$ and $0\le q\in L_1^{\loc}(\mathbb R).$ By a solution of \eqref{1} we mean any function $y,$ absolutely continuous together with its derivative and satisfying \eqref{1} almost everywhere in $\mathbb R.$ Let positive and continuous functions $\mu(x)$ and $\theta(x)$ for $x\in\mathbb R$ be given. Let us introduce the spaces
\begin{align*}
L_p(\mathbb R,\mu)&=\left\{f\in L_p^{\loc}(\mathbb R):\|f\|_{L_p(\mathbb R,\mu)}^p=\int_{-\infty}^\infty|\mu(x)f(x)|^pdx<\infty\right\},\\
L_p(\mathbb R,\theta)&=\left\{f\in L_p^{\loc}(\mathbb R):\|f\|_{L_p(\mathbb R,\theta)}^p=\int_{-\infty}^\infty|\theta(x)f(x)|^pdx<\infty\right\}.
\end{align*}

In the present paper, we obtain requirements to the functions $\mu,\theta$ and $q$ under which
\begin{enumerate}
\item[1)] for every function $f\in L_p(\mathbb R,\theta)$ there exists a unique solution \eqref{1} $y\in L_p(\mathbb R,\mu)$ of \eqref{1};
    \item[2)] there is an absolute constant $c(p)\in(0,\infty)$ such that regardless of he choice of a function $f\in L_p(\mathbb R,\theta)$ the solution of \eqref{1} satisfies the inequality
        $$\|y\|_{L_p(\mathbb R,\mu)}\le c(p)\|f\|_{L_p(\mathbb R,\theta)}.$$
        \end{enumerate}
\end{abstract}

\keywords{Admissible spaces, Sturm-Liouville equation}
\subjclass[2010]{34B05, 34B24, 34K06}
\maketitle

\baselineskip 20pt

\section{Introduction}\label{introduction}
\setcounter{equation}{0} \numberwithin{equation}{section}

In the present paper, we consider the equation
\begin{equation}\label{1.1}
-y''(x)+q(x)y(x )=f(x),\quad x\in \mathbb R
\end{equation}
where $f\in L_p^{\loc}(\mathbb R)$, $p\in[1,\infty)$ and
\begin{equation}\label{1.2}
0\le q \in L_1^{\loc}(\mathbb R).
\end{equation}

Our general goal is to determine a space frame within which equation \eqref{1.1}  always has a unique stable solution. To state the problem in  a more precise way, let us fix two positive continuous functions $\mu(x)$ and $\theta(x),$ $x\in\mathbb R,$ a number $p\in[1,\infty)$, and introduce the spaces $L_p(\mathbb R,\mu)$ and $L_p(\mathbb R,\theta):$
\begin{align}
&L_p(\mathbb R,\mu)=\left\{f\in L_p^{\loc}(\mathbb R):\|f\|_{L_p(\mathbb R ,\mu)}^p=\int_{-\infty}^\infty|\mu(x)f(x)|^pdx<\infty\right\}\label{1.3}\\
&L_p(\mathbb R,\theta)=\left\{f\in L_p^{\loc}(\mathbb R):\|f\|_{L_p(\mathbb R ,\theta)}^p=\int_{-\infty}^\infty|\theta(x)f(x)|^pdx<\infty\right\}.\label{1.4}
\end{align}

For brevity, below we write $L_{p,\mu}$ and $L_{p,\theta},$ \ $\|\cdot\|_{p,\mu}$ and $\|\cdot\|_{p,\theta}$, instead of $L_p(\mathbb R,\mu),$ $L_p(\mathbb R,\theta)$ and $\|\cdot\|_{L_p(\mathbb R,\mu)}$, $\|\cdot\|_{L_p(\mathbb R,\theta)},$ respectively (for $\mu=1$ we use the standard notation $L_p$ $(L_p:=L_p(\mathbb R))$ and $\|\cdot\|_p$ $(\|\cdot\|_p:=\|\cdot\|_{L_p}).$
In addition, below by a solution of \eqref{1.1} we understand any function $y,$ absolutely continuous together with its derivative and satisfying equality \eqref{1.1} almost everywhere on $\mathbb R$.

Let us introduce the following main definition (see \cite[Ch.5, \S50-51]{12}:
\begin{defn}\label{defn1.1}
We say that the spaces $L_{p,\mu}$ and $L_{p,\theta}$ make a pair $\{L_{p,\mu},L_{p,\theta}\}$ admissible for equation \eqref{1.1} if the following requirements hold:

I) for every function $f\in L_{p,\theta}$ there exists a unique solution $y\in L_{p,\mu}$ of \eqref{1.1};

II) there is a constant $c(p)\in (0,\infty)$ such that regardless of the choice of a function $f\in L_{p,\theta}$ the solution $y\in L_{p,\mu}$ of \eqref{1.1} satisfies the inequality
\begin{equation}\label{1.5}
\|y\|_{p,\mu}\le c(p)\|f\|_{p,\theta}.
\end{equation}
\end{defn}

Let us in addition we make the following conventions: For brevity we say ``problem I)--II)" or ``question on I)--II)" instead of ``problem (or question) on conditions for the functions $\mu$ and $\theta$ under which requirements I)--II) of Definition \ref{defn1.1} hold." We say ``the pair $\{L_{p,\mu};L_{p,\theta}\}$ admissible for \eqref{1.1}" instead of ``the pair of spaces $\{L_{p,\mu};L_{p,\theta}\}$ admissible for equation \eqref{1.1}", and we often omit the word ``equation" before \eqref{1.1}.
By $c,\, c(\cdot)$ we denote absolute positive constants which are not essential for exposition and may differ even within a single chain of calculations.
Our general requirement \eqref{1.2} is assumed to be satisfied throughout the paper, is not referred to, and does not appear in the statements.

Let us return to Definition \ref{defn1.1}.
The question on the admissibility of the pair $\{L_p,L_p\}$ for \eqref{1.1} was studied in \cite{3,6}  (in \cite{3,6} for $\mu\equiv\theta\equiv1$ in the case where I)--II) were valid, we said that equation \eqref{1.1} is correctly solvable in $L_p.$ We maintain this terminology in the present paper.) Let us quote the main result of \cite{3,6} (in terms of Definition \ref{defn1.1}).

\begin{thm} \label{thm1.2} \cite{3}
The pair $\{L_p,L_p\}$ is admissible for \eqref{1.1} if and only if there is $a\in(0,\infty)$ such that $q_0(a)>0.$ Here
\begin{equation}\label{1.6}
q_0(a)=\inf_{x\in\mathbb R}\int_{x-a}^{x+a}q(t)dt.
\end{equation}
\end{thm}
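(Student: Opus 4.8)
The statement is an ``if and only if,'' so the plan is to treat the two implications separately, both built on the Green function of the homogeneous operator. First I would fix a fundamental system of the homogeneous equation $-z''+qz=0$. Since $q\geq 0$ and $q\in L_1^{\loc}(\mathbb{R})$, this equation is nonoscillatory and admits a positive recessive (principal) solution $u$ at $+\infty$ (decreasing there, growing at $-\infty$) and a positive recessive solution $v$ at $-\infty$, normalized so that their Wronskian equals $1$. With this normalization the only candidate for a solution of \eqref{1.1} in $L_p$ is $y=Gf$, where
\begin{equation*}
(Gf)(x)=u(x)\int_{-\infty}^x v(t)f(t)\,dt+v(x)\int_x^\infty u(t)f(t)\,dt .
\end{equation*}
Thus $G$ has the nonnegative kernel $G(x,t)=u(x)v(t)$ for $t\le x$ and $G(x,t)=v(x)u(t)$ for $t\ge x$, and requirements I)--II) of Definition \ref{defn1.1} reduce to the boundedness of the integral operator $G$ on $L_p$, together with uniqueness: since each of $u,v$ grows to $+\infty$ at one end, no nontrivial combination $\alpha u+\beta v$ lies in $L_p(\mathbb{R})$, so the $L_p$ solution is unique whenever it exists.

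For the sufficiency, assume $q_0(a)>0$ for some $a\in(0,\infty)$. I would establish the two Schur bounds
\begin{equation*}
\sup_x\int_{-\infty}^\infty G(x,t)\,dt\le C,\qquad \sup_t\int_{-\infty}^\infty G(x,t)\,dx\le C,
\end{equation*}
with $C=C(a,q_0(a))$, and then invoke the generalized Young/Schur inequality, which for a nonnegative kernel with both marginals bounded by $C$ gives $\|Gf\|_p\le C\|f\|_p$ for every $p\in[1,\infty)$; this is exactly \eqref{1.5}, and with the uniqueness remark it yields admissibility. The key point is that $\int G(x,t)\,dt=u(x)\int_{-\infty}^x v(t)\,dt+v(x)\int_x^\infty u(t)\,dt$ is uniformly bounded precisely when $u$ and $v$ decay at a rate independent of the base point. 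This is where the hypothesis enters: the lower bound $\int_{\xi-a}^{\xi+a}q\ge q_0(a)>0$ on every window of length $2a$ forces $u$ and $v$ to decay geometrically along $\mathbb{R}$ at a rate depending only on $a$ and $q_0(a)$, which I would extract from the Riccati/comparison form of the equations satisfied by $u'/u$ and $v'/v$.

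For the necessity I would argue by contraposition: assume $q_0(a)=0$ for every $a>0$ and produce functions $f$ for which \eqref{1.5} fails. For each $n$ choose $a_n\to\infty$ and a center $x_n$ with $\int_{x_n-a_n}^{x_n+a_n}q$ arbitrarily small; on the long interval $(x_n-a_n,x_n+a_n)$ the operator then behaves almost like the free operator $-d^2/dx^2$, whose solutions grow, so a normalized bump $f$ supported there produces a solution whose $L_p$-norm grows without bound relative to $\|f\|_p$. Making this quantitative amounts to exhibiting points $x_n$ at which $\int G(x_n,t)\,dt\to\infty$, which contradicts the uniform bound forced by admissibility and thereby closes the equivalence.

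The main obstacle is the sufficiency direction, specifically the passage from the averaged lower bound $q_0(a)>0$ to the uniform decay of $u$ and $v$ and then to the uniform bound on $\int G(x,t)\,dt$. Controlling the near-diagonal contribution, where the kernel $u(x)v(t)$ is largest, requires the most care, since there $q$ need not be pointwise large even though its average is; the decay must be wrung out of the integral condition alone. Once these uniform solution estimates are in hand, the Schur test and the contrapositive construction are comparatively routine.
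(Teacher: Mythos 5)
This theorem is not proved in the present paper: it is quoted from \cite{3}, and inside this paper it corresponds to Theorem~\ref{thm2.6} (correct solvability iff \eqref{2.1} holds and $\hat d_0<\infty$) combined with the elementary equivalence $\hat d_0<\infty\Leftrightarrow\exists\, a:\ q_0(a)>0$. Measured against that machinery, your \emph{sufficiency} half is sound in outline and can be completed: $q_0(a)>0$ implies \eqref{2.1}, so the FSS $\{u,v\}$ of Theorem~\ref{thm2.2} exists, and the decay you defer to a ``Riccati/comparison'' argument is genuinely extractable. For instance, from $v'u-u'v=1$ one gets $v'(x+2a)\ge q_0(a)v(x)$, hence $u(x+2a)\le 1/(q_0(a)v(x))$, and $v(x+2a)\le v(x)+2a/u(x+2a)$, so $\rho=uv\le 1/q_0(a)+2a$; together with the geometric growth $v(x)\ge(1+2aq_0(a))\,v(x-4a)$ (and its mirror for $u$) this bounds $\sup_x\int G(x,t)\,dt$ and the Schur test then gives boundedness on every $L_p$ at once. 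This parallels the paper's own route via the Davies--Harrell formula \eqref{2.11} and Otelbaev's inequalities \eqref{2.12} with $\hat d_0<\infty$. (You also need the reduction of admissibility to boundedness of $G$ plus uniqueness; that is Theorem~\ref{thm2.5}/Lemma~\ref{lem4.2}, i.e. a citable but not free step, handled in the paper through Lemma~\ref{lem5.1}-type arguments.)

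The genuine gap is in the \emph{necessity} direction, in two places. First, your entire framework presupposes \eqref{2.1}: it is false that $0\le q\in L_1^{\loc}(\mathbb R)$ alone yields a positive FSS with the stated monotonicity and Wronskian $1$ (take $q\equiv 0$, or $q$ vanishing on a half-line). Under the contrapositive hypothesis ``$q_0(a)=0$ for all $a$'', condition \eqref{2.1} may simply fail, and then there is no kernel $G(x,t)$ to reason about; refuting admissibility in that case requires a separate construction (this is exactly what the paper's Theorem~\ref{thm4.1} does, with a nontrivial page-long argument). Second, even when \eqref{2.1} holds but $\hat d_0=\infty$, your contradiction is with ``the uniform bound on $\int G(x_n,t)\,dt$ forced by admissibility''. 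But $\sup_x\int G(x,t)\,dt<\infty$ is the $L_\infty\to L_\infty$ (equivalently, by symmetry of $G$ and Theorem~\ref{thm2.9}, the $L_1\to L_1$) boundedness condition. Admissibility of $\{L_p,L_p\}$ for a fixed $p\in(1,\infty)$ only gives boundedness of $G$ on that $L_p$, and for $1<p<\infty$ boundedness on $L_p$ does not formally imply the Schur condition; the assertion that boundedness for one $p$ forces it for $p=1$ (i.e. the $p$-independence of correct solvability) is essentially the theorem you are proving, so as framed the argument is circular. To close it you must make the ``behaves like the free operator'' heuristic quantitative: e.g. take $f_n=\chi_{[x_n-a_n,x_n+a_n]}$ on windows where $\int q$ is small and prove a lower bound of the type $\|Gf_n\|_p\ge c\,a_n^2\|f_n\|_p$, or pass through the Muckenhoupt-type conditions of Theorems~\ref{thm2.7}--\ref{thm2.8} applied to $u,v$ and show they blow up as $\hat d(x_n)\to\infty$. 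That quantitative step is the real content of the necessity proof in \cite{3,6} and is absent from your sketch.
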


Below we continue the investigation started in \cite{3,6}.

Our goal is as follows: given equation \eqref{1.1}, to determine requirements to the weights $\mu$ and $\theta$ under which the pair $\{L_{p,\mu};L_{p,\theta}\},$ $p\in[1,\infty),$ is admissible for \eqref{1.1}. Such an approach to the inversion of \eqref{1.1} allows to study this equation also in the case where \thmref{thm1.2} is not applicable, for example, in the following three cases:
\begin{enumerate}
\item[1)]\ $q_0(a)>0$ for some $a\in(0,\infty),$ $f\notin L_p,$ $p\in[1,\infty);$
\item[2)]\ $q_0(a)=0$ for all $a\in(0,\infty),$ $f\in L_p,$ $p\in[1,\infty);$
\item[2)]\ $q_0(a)=0$ for all $a\in(0,\infty),$ $f\notin L_p,$ $p\in[1,\infty).$
\end{enumerate}

 Our main result (see \thmref{thm4.3} in \S4 below) reduces the stated problem to the question on the boundedness of a certain integral operator $S: L_p\to L_p$ (see \eqref{4.3} in \S4). {}From this criterion, under additional requirements to the functions $\mu,$ $\theta$ and $q,$ one can deduce some concrete particular conditions which control the solution of our problem. See \S4 for such restrictions.

 We now describe the structure of the paper. Section~2 contains preliminaries; in Section~3 we give various technical assertions; all our results and relevant comments are presented in Section~4; all the proofs are collected in Section~5; and Section~6 contains an example of the presented statements.

 \section{Preliminaries}

 Recall that our standing assumption \eqref{1.2} is not included in the statements.

 \begin{lem}\label{lem2.1} \cite{4} Suppose that the following condition holds:
 \begin{equation}\label{2.1}
 \int_{-\infty}^x q(t)dt>0,\quad \int_x^\infty q(t)dt>0,\quad \forall x\in\mathbb R.
 \end{equation}
 Then for any given $x\in\mathbb R,$ each of the equations in $d\ge0$
 \begin{equation}\label{2.2}
 \int_0^{\sqrt 2 d}\int_{x-t}^{x+t}q(\xi)d\xi dt=2,\qquad d\int_{x-d}^{x+d}q(\xi)d\xi=2
 \end{equation}
 has a unique finite positive solution. Denote these solutions by $d(x)$ and $\hat d(x)$, respectively. We have the inequalities
 \begin{equation}\label{2.3}
 \frac{d(x)}{\sqrt 2}\le \hat d(x)\le \sqrt 2 d(x),\quad x\in\mathbb R.
 \end{equation}
 \end{lem}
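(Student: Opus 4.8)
The plan is to fix $x\in\mathbb R$ and analyze each of the two equations in \eqref{2.2} as an equation in the single unknown $d\ge0$, reducing everything to elementary monotonicity together with the intermediate value theorem. I introduce the auxiliary function $\psi(t)=\int_{x-t}^{x+t}q(\xi)\,d\xi$ for $t\ge0$, and write the two left-hand sides as $F(d)=\int_0^{\sqrt2 d}\psi(t)\,dt$ and $G(d)=d\,\psi(d)$. Since $q\in L_1^{\loc}(\mathbb R)$ and $q\ge0$, the function $\psi$ is continuous, non-decreasing, and $\psi(0)=0$; hence $F,G$ are continuous with $F(0)=G(0)=0$. The role of hypothesis \eqref{2.1} is to make $\psi$ eventually strictly positive: because $\int_x^\infty q>0$ and $\int_{-\infty}^x q>0$, there is a finite $t^*$ with $\psi(t)>0$ for all $t>t^*$, and by monotonicity $\psi$ then stays positive.

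I would first show $F(d)\to\infty$ and $G(d)\to\infty$ as $d\to\infty$. Writing $Q=\int_{\mathbb R}q\in(0,\infty]$, if $Q=\infty$ then $\psi(t)\to\infty$ and divergence is immediate; if $Q<\infty$ then $\psi(t)\to Q>0$, so $\psi(t)\ge Q/2$ for large $t$ and both $F$ and $G$ grow at least linearly in $d$. Next, $G$ is strictly increasing on $(t^*,\infty)$ and $F$ is strictly increasing on $(t^*/\sqrt2,\infty)$, since there the factor $\psi$, resp. the integrand at the upper limit, is positive. As each function is continuous, vanishes at $0$, is non-decreasing, tends to $\infty$, and is strictly increasing once positive, the level sets $\{d:F(d)=2\}$ and $\{d:G(d)=2\}$ are singletons lying in the strictly increasing region; this produces the unique finite positive solutions $d(x)$ and $\hat d(x)$.

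For the two-sided bound \eqref{2.3} I would sandwich the integral $F$ by the monotone $\psi$ evaluated at the endpoints and then compare with $G$. Put $d=d(x)$, so $\int_0^{\sqrt2 d}\psi=2$. For the upper estimate, monotonicity gives $2=\int_0^{\sqrt2 d}\psi(t)\,dt\le \sqrt2 d\,\psi(\sqrt2 d)=G(\sqrt2 d)$; since $G(\hat d)=2\le G(\sqrt2 d)$ and $G$ is strictly increasing where positive, $\hat d\le\sqrt2\,d$. For the lower estimate, restrict the integral to $[d/\sqrt2,\sqrt2 d]$ and bound $\psi$ below by $\psi(d/\sqrt2)$:
\[
2=\int_0^{\sqrt2 d}\psi(t)\,dt\ge \psi\!\left(\tfrac{d}{\sqrt2}\right)\left(\sqrt2 d-\tfrac{d}{\sqrt2}\right)=\tfrac{d}{\sqrt2}\,\psi\!\left(\tfrac{d}{\sqrt2}\right)=G\!\left(\tfrac{d}{\sqrt2}\right),
\]
where I used the identity $\sqrt2 d-d/\sqrt2=d/\sqrt2$. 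Hence $G(d/\sqrt2)\le2=G(\hat d)$, so $d/\sqrt2\le\hat d$, and \eqref{2.3} follows.

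The argument is essentially elementary, so genuine care is needed only in two places. First, the divergence $F,G\to\infty$ and the finiteness of $t^*$ rely precisely on the strict positivity in \eqref{2.1}, not merely on $q\ge0$. Second, converting the non-strict monotonicity of $F,G$ into true uniqueness (and into the implications $G(a)\le G(b)\Rightarrow a\le b$ used above) requires noting that at any point where the value equals $2>0$ the relevant factor $\psi$ is positive, so strict monotonicity holds there and the level set cannot be a nondegenerate interval. The algebraic coincidence $\sqrt2 d-d/\sqrt2=d/\sqrt2$ is what yields the symmetric constant $\sqrt2$ in \eqref{2.3}, and I expect identifying this choice of sub-interval to be the one non-mechanical step.
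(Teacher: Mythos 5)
Your proof is correct. Note that the paper itself gives no proof of Lemma \ref{lem2.1} --- it is quoted from reference [4] --- so there is no internal argument to compare against; your elementary route via the monotone function $\psi(t)=\int_{x-t}^{x+t}q(\xi)\,d\xi$, the intermediate value theorem, and the observation that $F$ and $G$ become strictly increasing wherever they are positive, is a sound and self-contained reconstruction. You correctly isolate the two delicate points: condition \eqref{2.1} is what forces $\psi$ to become (and stay) positive at some finite $t^*$, giving both divergence of $F,G$ and uniqueness of the level-$2$ points; and the comparison $2=\int_0^{\sqrt2 d}\psi\le\sqrt2\,d\,\psi(\sqrt2 d)$ together with $2\ge\int_{d/\sqrt2}^{\sqrt2 d}\psi\ge\frac{d}{\sqrt2}\,\psi\bigl(\frac{d}{\sqrt2}\bigr)$, combined with strict monotonicity of $G$ at points where it is positive, yields exactly the two-sided bound \eqref{2.3} with the constant $\sqrt2$.
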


 Note that the functions $d(x)$ and $\hat d(x)$ were introduced by the authors (see \cite{1,4}) and M. Otelbaev (see \cite{10}), respectively. Analysing our assertions and requirements (see \S4 below), it is useful to take into account that the function $q^*(x)\doe d^{-2}(x)$ $(d^{-2}:=1/d^2)$ can be interpreted as a composed (in the sense of function theory) average of the function $q(\xi),$ $\xi\in\mathbb R$, at the point $\xi=x$ with step $d(x).$ Indeed, denote
 $$S_x(q)(t)=\frac{1}{2t}\int_{x-t}^{x+t}q(\xi)d\xi,\quad t>0, \quad x\in\mathbb R,$$
 $$M(f)(\eta)=\frac{1}{\eta^2}\int_0^{\sqrt 2\eta}tf(t)dt,\quad \eta>0.$$
Clearly, $S_x(q)(t)$ is the Steklov average with step $t>0$ of the function $q(\xi),$ $\xi\in\mathbb R$, at the point $\xi=x,$ and $M(f)(\eta)$ is the average of the function $f(t),$ $t>0$ with step $\eta>0$ at the point $t=0.$ Now, using
\begin{align*}q^*(x)&=\frac{1}{d^2(x)}=\frac{1}{2d^2(x)}\int_0^{\sqrt 2d(x)}\int_{x-t}^{x+t}q(\xi)d\xi dt\\
&=\frac{1}{d^2(x)}\int_0^{\sqrt 2 d(x)}t\left[\frac{1}{2t}\int_{x-t}^{x+t}q(\xi)d\xi\right]dt=M(S_x(q))(d(x)).
\end{align*}

Similarly, the function $\hat q^*(x)\doe \hat d(x)^{-2}$ $x\in\mathbb R,$ can be interpreted as the Steklov average of the function $q(\xi),$ $\xi\in\mathbb R,$ at the point $\xi=x$ with step $\hat d(x).$ Indeed (see \eqref{2.1}), we have
$$\hat q^*(x)=\frac{1}{\hat d^2(x)}=\frac{1}{2\hat d(x)}\int_{x-\hat d(x)}^{x+\hat d(x)} q(\xi)d\xi=S_x(q)(\hat d(x)).$$

\begin{thm}\label{thm2.2} \cite{2}
Suppose that \eqref{2.1} holds. Then the equation
\begin{equation}\label{2.4}
z''(x)=q(x)z(x),\quad x\in\mathbb R,
\end{equation}
has a fundamental system of solutions (FSS) $\{u(x),v(x)\},$ $x\in\mathbb R,$ such that
\begin{gather}
u(x)>0,\quad v(x)>0,\quad u'(x)<0,\quad v'(x)>0,\quad \forall x\in\mathbb R,\label{2.5}\\
v'(x)u(x)-u'(x)v(x)=1,\quad \forall x\in\mathbb R,\label{2.6}\\
\lim_{x\to-\infty}\frac{v(x)}{u(x)}=\lim_{x\to\infty}\frac{u(x)}{v(x)}=0,\label{2.7}\\
|\rho'(x)|<1,\quad \forall x\in\mathbb R,\quad \rho(x)\doe u(x)v(x).\label{2.8}
\end{gather}
\end{thm}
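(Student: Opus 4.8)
The plan is to build $u$ and $v$ as the recessive (principal) solutions of \eqref{2.4} at $+\infty$ and $-\infty$ respectively, and then read off \eqref{2.5}--\eqref{2.8} from their monotonicity together with a normalization of the Wronskian. The first observation I would record is that, since $q\ge0$, equation \eqref{2.4} is disconjugate on $\mathbb R$: every nontrivial solution has at most one zero. Indeed, a solution $z$ with two zeros $a<b$ may be taken positive on $(a,b)$, but then $z''=qz\ge0$ there, so $z$ is convex on $[a,b]$ and vanishes at both endpoints, forcing $z\le0$ on $(a,b)$ -- a contradiction. The same convexity remark shows that any positive solution of \eqref{2.4} is convex, which is the structural fact driving everything below.

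Next I would construct $u$ by a monotone limiting argument. For $b>0$ let $u_b$ solve \eqref{2.4} on $[0,b]$ with $u_b(0)=1$ and $u_b(b)=0$; disconjugacy makes this boundary value problem uniquely solvable and forces $u_b>0$ on $[0,b)$. Convexity then gives $u_b'\le u_b'(b)<0$ on $[0,b]$, so $0\le u_b\le1$ and $u_b$ is strictly decreasing. A comparison via disconjugacy applied to the difference $u_{b_2}-u_{b_1}$ (which solves \eqref{2.4} and vanishes at $0$) shows that $u_b(x)$ is nondecreasing in $b$ for each fixed $x$; being bounded by $1$, it converges, together with its derivatives, to a solution $u$ of \eqref{2.4} with $u(0)=1$, $0\le u\le1$ and $u'\le0$ on $[0,\infty)$. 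That $u>0$ and $u'<0$ strictly on all of $\mathbb R$ is exactly where assumption \eqref{2.1} enters: if $u$ or $u'$ vanished on some half-line, uniqueness and convexity would force $q\equiv0$ there, contradicting $\int_x^\infty q>0$; and for $x\le0$ the monotonicity of $u'$ gives $u'(x)\le u'(0)<0$. Reflecting $x\mapsto-x$ (which preserves \eqref{2.4} and \eqref{2.1}) yields in the same way a solution $v>0$ with $v'>0$ on $\mathbb R$. This establishes \eqref{2.5}. Since $u'<0<v'$, the pair $\{u,v\}$ is linearly independent, hence an FSS; as the coefficient of $z'$ in \eqref{2.4} is zero, the Wronskian $v'u-u'v$ is constant and, by \eqref{2.5}, strictly positive, so after rescaling one of the solutions we may normalize it to $1$, which is \eqref{2.6}.

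Finally, \eqref{2.7} follows from the growth afforded by \eqref{2.5}: convexity gives $u(x)\ge u(0)+|u'(0)|\,|x|\to\infty$ as $x\to-\infty$, while $v$, being increasing, stays bounded on $(-\infty,0]$, so $v/u\to0$; the mirror estimate gives $u/v\to0$ as $x\to+\infty$. For \eqref{2.8} I would argue purely algebraically: writing $\rho=uv$ and using \eqref{2.6} in the form $uv'-u'v=1$, one gets $uv'=\tfrac12(\rho'+1)$ and $u'v=\tfrac12(\rho'-1)$; the signs in \eqref{2.5} give $uv'>0$ and $u'v<0$, i.e.\ $\rho'>-1$ and $\rho'<1$, which is $|\rho'|<1$. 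I expect the genuine difficulty to be concentrated in the construction step -- proving that the limit $u$ exists and that \eqref{2.1} forces the strict inequalities $u>0$ and $u'<0$ (and their mirror images for $v$) rather than merely non-strict ones; once \eqref{2.5} and \eqref{2.6} are in hand, the remaining properties \eqref{2.7}--\eqref{2.8} are routine.
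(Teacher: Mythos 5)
The paper gives no proof of \thmref{thm2.2}: it is quoted verbatim from \cite{2}, so there is no internal argument to compare against. Judged on its own merits, your proof is correct and follows the classical route used in that literature (constructing $u$ and $v$ as the principal/recessive solutions at $+\infty$ and $-\infty$, obtained as monotone limits of solutions of boundary value problems on $[0,b]$); the only steps you gloss over are routine: (i) the convergence $u_b\to u$ \emph{together with derivatives} should be justified, e.g.\ via the Volterra representation $u_b(x)=1+u_b'(0)x+\int_0^x(x-s)q(s)u_b(s)\,ds$ and monotone convergence, which forces $u_b'(0)$ to converge; and (ii) on $(-\infty,0]$ your appeal to monotonicity of $u'$ presupposes convexity, i.e.\ $u''=qu\ge0$, which is only guaranteed where $u\ge0$, so positivity and convexity for $x\le0$ must be obtained simultaneously by a short continuation argument (on the maximal interval of positivity to the left of $0$ one has $u'\le u'(0)<0$, hence $u\ge1$ there, so that interval is all of $(-\infty,0]$) --- after which your derivations of \eqref{2.6}, \eqref{2.7}, and of \eqref{2.8} via $2uv'=\rho'+1$, $2u'v=\rho'-1$ are all correct.
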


Let us introduce the Green function of equation \eqref{1.1}:
\begin{equation}\label{2.9}
G(x,t)=\begin{cases} u(x)v(t),\ & x\ge t\\
u(t)v(x),\ &x\le t\end{cases}
\end{equation}

\begin{thm}\label{thm2.3}\cite{8}
For $x,t\in\mathbb R$, we have the Davies-Harrell representations for the solution $\{u(x),v(x)\}$ and the Green function $G(x,t):$
\begin{gather}\label{2.10}
u(x)=\sqrt{\rho(x)}\exp\left(-\frac{1}{2}\int_{x_0}^x\frac{d\xi}{\rho(\xi)}\right),\quad v(x)=\sqrt{\rho(x)}\exp\left(\frac{1}{2}\int_{x_0}^x\frac{d\xi}{\rho(\xi)}\right),\\
\label{2.11}
G(x,t)=\sqrt{\rho(x)\rho(t)}\exp\left(-\frac{1}{2}\left|\int_x^t\frac{d\xi}{\rho(\xi)}\right|\right).
\end{gather}
\end{thm}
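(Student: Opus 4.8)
The plan is to derive everything from the Wronskian normalization \eqref{2.6} together with the positivity in \eqref{2.5}, reducing the whole statement to a single integration. First I would note that since $\rho(x)=u(x)v(x)>0$ throughout, the logarithmic derivative of the ratio $v/u$ is controlled precisely by the Wronskian:
\[
\frac{d}{dx}\ln\frac{v(x)}{u(x)}=\frac{v'(x)u(x)-u'(x)v(x)}{u(x)v(x)}=\frac{1}{\rho(x)},
\]
where the last equality uses \eqref{2.6} and $\rho=uv$. This identity is the heart of the matter: it turns the representation problem into an elementary quadrature.

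Next I would fix the base point $x_0$ and exploit the residual scaling freedom in the FSS. Any fundamental system of \eqref{2.4} satisfying \eqref{2.5}--\eqref{2.6} is determined only up to the transformation $u\mapsto cu$, $v\mapsto c\1 v$ with $c>0$, which preserves all the sign conditions of \eqref{2.5}, the product $\rho=uv$, and hence the Green function \eqref{2.9}. I therefore normalize so that $u(x_0)=v(x_0)$ (equivalently $u(x_0)=v(x_0)=\sqrt{\rho(x_0)}$). Integrating the identity above from $x_0$ to $x$ then gives
\[
\ln\frac{v(x)}{u(x)}=\int_{x_0}^x\frac{d\xi}{\rho(\xi)},\qquad\text{i.e.}\qquad \frac{v(x)}{u(x)}=\exp\left(\int_{x_0}^x\frac{d\xi}{\rho(\xi)}\right).
\]
Combining this with $u(x)v(x)=\rho(x)$ and extracting positive square roots (legitimate by the positivity in \eqref{2.5}) isolates $u$ and $v$ separately: multiplying the two relations yields $v^2=\rho\exp(\int_{x_0}^x d\xi/\rho)$ and dividing yields $u^2=\rho\exp(-\int_{x_0}^x d\xi/\rho)$, which are exactly the formulas \eqref{2.10}.

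Finally, for the Green function I would substitute \eqref{2.10} into the two branches of \eqref{2.9}. For $x\ge t$ the factor $u(x)v(t)$ collapses, after combining the exponentials, to
\[
u(x)v(t)=\sqrt{\rho(x)\rho(t)}\exp\left(-\frac{1}{2}\int_{t}^{x}\frac{d\xi}{\rho(\xi)}\right),
\]
and the symmetric computation for $x\le t$ gives $u(t)v(x)=\sqrt{\rho(x)\rho(t)}\exp\left(-\frac{1}{2}\int_{x}^{t}\frac{d\xi}{\rho(\xi)}\right)$. Because $\rho>0$, the inner integral is monotone in its endpoints, so in the first case $\int_t^x d\xi/\rho=\left|\int_x^t d\xi/\rho\right|$ and in the second $\int_x^t d\xi/\rho=\left|\int_x^t d\xi/\rho\right|$; both branches therefore coincide with the single formula \eqref{2.11}.

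I would flag that there is no genuine obstacle here: positivity of $\rho$ (from \eqref{2.5}) is precisely what makes the logarithm, the square-root extraction, and the sign of the Green-function integral all unambiguous. The only point deserving care is the normalization $u(x_0)=v(x_0)$; without it the individual formulas in \eqref{2.10} would each acquire a constant factor $c^{\pm1}$, although \eqref{2.11} is insensitive to this choice since it depends only on the scaling-invariant objects $\rho$ and $G$.
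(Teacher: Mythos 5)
The paper contains no proof of this statement to compare against: Theorem \ref{thm2.3} is quoted from Davies--Harrell \cite{8}, with the existence and uniqueness of the crossing point $x_0$ attributed to \cite{2}, and nothing about it appears in the proofs of Section~5. Judged on its own, your derivation is correct and complete. The key identity $\frac{d}{dx}\ln\frac{v}{u}=\frac{v'u-u'v}{uv}=\frac{1}{\rho}$ from \eqref{2.6}, integration from $x_0$, the algebra $v^2=\rho\cdot\frac{v}{u}$, $u^2=\rho\cdot\frac{u}{v}$ with positive square roots justified by \eqref{2.5}, and the substitution of \eqref{2.10} into the two branches of \eqref{2.9} (with the sign of the exponent fixed by $\rho>0$) do yield \eqref{2.10} and \eqref{2.11} exactly.

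One adjustment to align with the statement as the paper formulates it: there $x_0$ is not an arbitrary base point to which one rescales the FSS, but is \emph{defined} as the unique solution of $u(x)=v(x)$ for the given FSS of Theorem \ref{thm2.2}. Existence and uniqueness of this point follow from \eqref{2.6} and \eqref{2.7}: one has $\left(\frac{v}{u}\right)'=\frac{1}{u^2}>0$, so $v/u$ is strictly increasing, and by \eqref{2.7} it tends to $0$ at $-\infty$ and to $+\infty$ at $+\infty$, hence crosses $1$ exactly once. With that reading your integration constant vanishes by the very definition of $x_0$, no rescaling is needed, and the rest of your computation goes through verbatim. Relatedly, your side claim that \eqref{2.5}--\eqref{2.6} determine the FSS only up to $u\mapsto cu$, $v\mapsto c^{-1}v$ is not actually needed (and, as stated, its uniqueness half would require \eqref{2.7} to justify); your argument only uses the trivial direction, namely that this rescaling preserves the hypotheses and the Green function.
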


Here $x_0$ is a unique solution of the equation $u(x)=v(x),$ $x\in\mathbb R$ (see \cite{2}), the function $\rho$ is defined in \eqref{2.8}.

\begin{thm}\label{thm2.4} \cite{4}
Suppose that \eqref{2.1} holds. Then we have the Otelbaev ienqualities:
\begin{equation}\label{2.12}
\frac{d(x)}{2\sqrt 2}\le\rho(x)\le\sqrt 2 d(x),\quad x\in\mathbb R.
\end{equation}
\end{thm}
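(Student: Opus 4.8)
The plan is to translate the definition of $d(x)$ in \eqref{2.2} into a statement about $\rho(x)=u(x)v(x)$ by deriving a nonlinear identity that $\rho$ inherits from the factorization $\{u,v\}$ of \eqref{2.4}. Writing $u''=qu$, $v''=qv$ and using the Wronskian normalization \eqref{2.6}, a direct computation gives $(\rho')^2-1=4\rho\,u'v'$, whence
\[
2\rho\rho''-(\rho')^2+1=4q\rho^2,\qquad\text{i.e.}\qquad q=\frac{\rho''}{2\rho}+\frac{1-(\rho')^2}{4\rho^2}.
\]
This is the bridge: it expresses $q$ through $\rho$ and lets me compute the very averages of $q$ that define $d(x)$.

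Next I would insert this identity into the double integral in \eqref{2.2}. Interchanging the order of integration turns $\int_0^{D}\int_{x-t}^{x+t}q\,d\xi\,dt$ into the triangular average $\Phi(D):=\int_{x-D}^{x+D}(D-|\xi-x|)q(\xi)\,d\xi$, which is nondecreasing in $D$ and equals $2$ precisely at $D=\sqrt2\,d(x)$. Integrating the $\rho''/2\rho$ term by parts against the tent weight $D-|\xi-x|$ (whose boundary values vanish) produces the master identity
\[
\Phi(D)=\tfrac12\log\frac{\rho(x+D)\rho(x-D)}{\rho(x)^2}+\int_{x-D}^{x+D}(D-|\xi-x|)\,\frac{1+(\rho'(\xi))^2}{4\rho(\xi)^2}\,d\xi .
\]
Thus both inequalities reduce to comparing $\Phi$ with $2$ at arguments comparable to $\rho(x)$: since $\Phi$ is increasing, $\rho(x)\le\sqrt2\,d(x)$ is equivalent to $\Phi(\rho(x))\le2$, and $\rho(x)\ge d(x)/(2\sqrt2)$ to $\Phi(4\rho(x))\ge2$.

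To estimate $\Phi$ I would use the only quantitative control available, $|\rho'|<1$ from \eqref{2.8}, i.e. $\rho$ is $1$-Lipschitz: $\rho(x)-|\xi-x|\le\rho(\xi)\le\rho(x)+|\xi-x|$ on $[x-D,x+D]$. The upper bound $\rho(x)\le\sqrt2\,d(x)$ is the soft direction, and there it is cleaner to pass to the single integral defining $\hat d(x)$ in \eqref{2.2} via the Riccati functions $w=-u'/u$ and $\tilde w=v'/v$, which satisfy $w'=w^2-q$, $\tilde w'=q-\tilde w^2$, $w,\tilde w>0$ and $w+\tilde w=1/\rho$. Integrating these over $[x-\hat d,x+\hat d]$ and comparing with $\int_{x-\hat d}^{x+\hat d}q=2/\hat d$ works cleanly when $\hat d(x)<\rho(x)$, since then the interval is shorter than $\rho(x)$ and the Lipschitz bound keeps $\rho$ bounded away from $0$; the case $\hat d(x)\ge\rho(x)$ is immediate. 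Invoking \eqref{2.3} then converts the bound on $\hat d(x)$ into one on $d(x)$.

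The lower bound $\rho(x)\ge d(x)/(2\sqrt2)$ is the delicate one and is where the main obstacle lies, since it requires $\Phi(4\rho(x))\ge2$ over an interval of length comparable to $\rho(x)$ itself. There the Lipschitz lower envelope $\rho(x)-|\xi-x|$ drops to $0$ well before the endpoints and gives no usable lower bound on $\rho$. Consequently, if $\rho$ falls off at near-maximal rate toward an endpoint, the bulk integral $\int(D-|\xi-x|)(4\rho^2)^{-1}$ grows large while the boundary logarithm $\tfrac12\log(\rho(x+D)\rho(x-D)/\rho(x)^2)$ becomes strongly negative at exactly the same place; the two effects compensate and must be estimated jointly rather than term by term. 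Carrying out this compensation, and optimizing the multiple of $\rho(x)$ chosen for $D$, is what upgrades the soft comparability $\rho(x)\asymp d(x)$ to the sharp constants $1/(2\sqrt2)$ and $\sqrt2$; this is the technical heart of the argument, the remaining steps being the bookkeeping outlined above.
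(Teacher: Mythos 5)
The paper never proves Theorem~\ref{thm2.4}: it is quoted from \cite{4}, so your attempt can only be judged against the statement itself, and as written it does not establish it. Your algebraic preparation is correct and useful: the identity $q=\frac{\rho''}{2\rho}+\frac{1-(\rho')^2}{4\rho^2}$ does follow from $u''=qu$, $v''=qv$ and \eqref{2.6}; the double integral in \eqref{2.2} does equal the tent average $\Phi(D)=\int_{x-D}^{x+D}(D-|\xi-x|)q(\xi)\,d\xi$ with $\Phi(\sqrt2\,d(x))=2$; your master identity is right; and Lemma~\ref{lem3.5} legitimizes the reduction of \eqref{2.12} to the two claims $\Phi(\rho(x))\le2$ and $\Phi(4\rho(x))\ge2$. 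But the two estimates that actually produce the constants $\sqrt2$ and $1/(2\sqrt2)$ are exactly the parts you do not carry out, and they are the proof.

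For the lower bound you concede this yourself: the joint compensation between the negative boundary logarithm and the divergent bulk integral is declared ``the technical heart of the argument'' and is nowhere performed, so nothing in the proposal shows $\Phi(4\rho(x))\ge2$. For the upper bound, the Riccati detour through $\hat d$ cannot deliver the stated constant. Writing $w=\frac{1-\rho'}{2\rho}$, $\tilde w=\frac{1+\rho'}{2\rho}$ and integrating $q=\tilde w^2+\tilde w'$ (or $q=w^2-w'$) over $[x-\hat d,x+\hat d]$ in the case $\hat d(x)<\rho(x)$, with the Lipschitz bound $\rho\ge\rho(x)-\hat d(x)$, gives $\frac{2}{\hat d}\le\frac{2\hat d}{(\rho(x)-\hat d)^2}+\frac{1}{\rho(x)-\hat d}$, i.e.\ only $\rho(x)\le C\hat d(x)$ with some $C\ge2$; and any bound $\rho\le C\hat d$ with $C>1$ is fatal, since \eqref{2.3} turns it into $\rho\le C\sqrt2\,d$, not $\rho\le\sqrt2\,d$. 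You would need $\rho\le\hat d$ with constant exactly $1$, which is sharp (two narrow bumps of mass $m$ centred at $\pm L$, with vanishing background and $mL\to1$, give $\rho(0)/\hat d(0)\to1$), so no such one-line integration can reach it. Ironically, the route you set up and then abandoned does close this half: taking $D=\rho(x)$ in your master identity, using the Lipschitz envelopes from \emph{both} endpoints, $\rho(\xi)\ge\max\bigl(\rho(x)-|\xi-x|,\,\rho(x\pm D)-(D-|\xi-x|)\bigr)$, and splitting the bulk integral where the two envelopes cross makes the logarithmic divergence cancel against the boundary logarithm and yields $\Phi(\rho(x))\le\frac12+\log2<2$, hence $\rho\le\sqrt2\,d$ by Lemma~\ref{lem3.5}. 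So the upper bound is salvageable along your own tent-identity line, but the lower bound remains entirely open; the proposal is a plan, not a proof.
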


Two-sided, sharp by order estimates of the function $\rho$ were first obtained by M. Otelbaev (see \cite{10}), and therefore all such inequalities are referred to by his name. Note that the inequalities given in \cite{10} are expressed in terms of another auxiliary function, more complicated than $d(x),$ $x\in\mathbb R$, and are proven under auxiliary requirements to the function $q.$

Let us introduce the Green operator
\begin{equation}\label{2.13}
(Gf)(x)=\int_{-\infty}^\infty G(x,t)f(t)dt,\quad x\in\mathbb R.
\end{equation}

\begin{thm}\label{thm2.5} \cite{6}
Suppose that \eqref{2.1} holds, and let $p\in[1,\infty).$  Then  equation \eqref{1.1} is correctly solvable in $L_p$ (or, in other words, the pair $\{L_p,L_p\}$ is admissible for \eqref{1.1}) if and only if the operator $G:L_p\to L_p$ is bounded. In the latter case, for $f\in L_p,$ the solution $y\in L_p$ of \eqref{1.1} is of the form $y=Gf.$
\end{thm}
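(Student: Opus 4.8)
The plan is to treat the concrete integral operator $G$ and the abstract solution map on an equal footing and connect them through variation of parameters. First I would record the pointwise identity coming from the fundamental system $\{u,v\}$ of \thmref{thm2.2}: splitting \eqref{2.13} at $t=x$ gives
\[
(Gf)(x)=u(x)\int_{-\infty}^x v(t)f(t)\,dt+v(x)\int_x^\infty u(t)f(t)\,dt.
\]
Differentiating twice, the boundary terms cancel by the Wronskian relation \eqref{2.6}, and using $u''=qu$, $v''=qv$ one obtains $-(Gf)''+q\,(Gf)=f$ a.e. To justify this for a given $f\in L_{p}$ I would note that $\int_{-\infty}^x v|f|=u(x)^{-1}\int_{-\infty}^x G(x,t)|f(t)|\,dt\le u(x)^{-1}(G|f|)(x)$, which is finite for a.e.\ $x$ and hence, by monotonicity in $x$ and local integrability, for every $x$; the symmetric bound handles $\int_x^\infty u|f|$. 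Thus $Gf$ is $C^1$ with absolutely continuous derivative and is a genuine solution of \eqref{1.1}.

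Next I would prove a uniqueness lemma: the homogeneous equation $w''=qw$ has no nonzero solution in $L_{p}$. Every such $w$ equals $c_1u+c_2v$; since $u'<0$ and $v'>0$ by \eqref{2.5}, the function $u$ is bounded below by a positive constant on any half-line $(-\infty,x_*]$ and $v$ likewise on $[x_*,\infty)$, so neither $u$ nor $v$ lies in $L_{p}$ near the corresponding end. Combined with the domination $u/v\to 0$ at $+\infty$ and $v/u\to 0$ at $-\infty$ from \eqref{2.7}, this gives $|w|\ge\tfrac12|c_2|v$ near $+\infty$ and $|w|\ge\tfrac12|c_1|u$ near $-\infty$, forcing $c_1=c_2=0$. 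This settles the ($\Leftarrow$) direction at once: if $G$ is bounded on $L_{p}$, then for each $f$ the function $y=Gf$ is a solution in $L_{p}$ with $\|y\|_p\le\|G\|\,\|f\|_p$ (requirement II, $c(p)=\|G\|$), and the uniqueness lemma yields requirement I.

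For the ($\Rightarrow$) direction, correct solvability supplies a well-defined linear solution map $T\colon L_{p}\to L_{p}$ with $\|Tf\|_p\le c(p)\|f\|_p$. I would first identify $Tf=Gf$ on compactly supported $f$: both are solutions of \eqref{1.1}, so their difference is $c_1u+c_2v$; but for $f$ supported in $[-R,R]$ one has $Gf=\bigl(\int uf\bigr)v$ on $(-\infty,-R)$ and $Gf=\bigl(\int vf\bigr)u$ on $(R,\infty)$, and feeding this into the end-behavior argument above forces $c_1=c_2=0$, so $\|Gf\|_p=\|Tf\|_p\le c(p)\|f\|_p$. Finally, since $G(x,t)>0$, for $f\ge 0$ I would approximate by $f_n=f\chi_{[-n,n]}\uparrow f$ and invoke monotone convergence to get $\|Gf\|_p=\lim_n\|Gf_n\|_p\le c(p)\|f\|_p$; splitting a general $f$ into its positive and negative parts and using $|Gf|\le G|f|$ then gives boundedness of $G$ on all of $L_{p}$, while density of the compactly supported functions yields $y=Tf=Gf$ throughout.

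I expect the ($\Rightarrow$) direction to be the crux, and within it two points demand care. The first is pinning down that the $L_{p}$ solution is exactly $Gf$ rather than $Gf$ plus a homogeneous term; this is precisely where the sign conditions \eqref{2.5} and the ratio limits \eqref{2.7} are genuinely used, through the recessive/dominant dichotomy of $u$ and $v$ at each end. The second is upgrading the a priori estimate from compactly supported $f$ to all of $L_{p}$, which relies essentially on the positivity of the Green kernel to run the monotone-convergence extension. By comparison, the ($\Leftarrow$) direction and the variation-of-parameters verification are routine.
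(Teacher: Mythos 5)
Your proof is correct, but it takes a genuinely different route from the paper's. (The paper quotes this theorem from \cite{6} and proves its weighted generalization, \thmref{thm4.3}, whose specialization $\mu\equiv\theta\equiv1$ is exactly the present statement; that is the proof to compare against.) The shared core is your identification step: for compactly supported $f$ the unique $L_p$ solution equals $Gf$, forced by the recessive/dominant dichotomy of $u$ and $v$ at $\pm\infty$ via \eqref{2.5} and \eqref{2.7} --- this is precisely the paper's Lemma~\ref{lem5.1}, proved by the same end-behavior contradiction, and your homogeneous-equation lemma matches Lemma~\ref{lem4.2} at $\mu\equiv1$. Where you diverge is in upgrading the a priori bound on compactly supported data to boundedness of $G$ on all of $L_p$: the paper feeds explicit test functions (such as $f=\theta^{-p'}u^{p'-1}\chi_{[x_1,x_2]}$, see \eqref{5.19}) into the estimate to verify the Muckenhoupt-type conditions of Theorems~\ref{thm2.7} and \ref{thm2.8} for the two Hardy components $S_1,S_2$ of $G$, and then sums them, which forces a separate treatment of $p=1$ through Theorem~\ref{thm2.9}; you instead exploit positivity of the Green kernel and monotone convergence to pass from $f\ge 0$ compactly supported to general $f$, with $|Gf|\le G|f|$ finishing the job. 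Your route is more elementary and handles every $p\in[1,\infty)$ uniformly in one stroke; the paper's route is heavier but yields the explicit Hardy/Muckenhoupt quantities $M_p,\tilde M_p$ that become the raw material for the weighted admissibility criteria later in the paper (Theorems~\ref{thm4.3} and \ref{thm4.7}), which a pure monotone-convergence argument does not produce. The sufficiency directions are essentially parallel: the operator bound gives a.e.\ (hence, by monotonicity in $x$, everywhere) convergence of the defining integrals, the solution property by variation of parameters, and estimate \eqref{1.5}, with uniqueness supplied by the homogeneous-equation lemma.
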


\begin{thm}\label{thm2.6} \cite{3}
For $p\in[1,\infty)$, equation \eqref{1.1} is correctly solvable in $L_p$ (i.e., the pair
$\{L_p,L_p\}$ is admissible for \eqref{1.1}) if and only if equalities \eqref{2.1} hold and $\hat d_0<\infty.$
Here
\begin{equation}\label{2.14}
\hat d_0=\sup_{x\in\mathbb R}\hat d(x).
\end{equation}
\end{thm}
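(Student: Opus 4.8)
The plan is to reduce this biconditional to \thmref{thm1.2}, which already characterizes correct solvability in $L_p$ by the existence of some $a\in(0,\infty)$ with $q_0(a)>0$. Thus it suffices to prove that the condition ``\eqref{2.1} holds and $\hat d_0<\infty$'' is equivalent to ``there is $a\in(0,\infty)$ such that $q_0(a)>0$''; the analytic content (boundedness of the Green operator, the Davies--Harrell estimates) is then entirely absorbed into \thmref{thm1.2}, and what remains is elementary monotonicity bookkeeping.

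For the forward implication, assume \eqref{2.1} and $\hat d_0<\infty$. By \lemref{lem2.1}, condition \eqref{2.1} guarantees that $\hat d(x)$ exists as a finite positive solution of $d\int_{x-d}^{x+d}q(\xi)\,d\xi=2$ for every $x$, so that $\hat d_0$ is a well-defined positive quantity. I would take $a=\hat d_0$. Since $\hat d(x)\le\hat d_0=a$ and $q\ge 0$, the monotonicity of $t\mapsto\int_{x-t}^{x+t}q$ gives
$$\int_{x-a}^{x+a}q(\xi)\,d\xi\ge\int_{x-\hat d(x)}^{x+\hat d(x)}q(\xi)\,d\xi=\frac{2}{\hat d(x)}\ge\frac{2}{\hat d_0}$$
for every $x$, whence $q_0(a)\ge 2/\hat d_0>0$.

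For the reverse implication, assume $q_0(a)>0$ for some $a\in(0,\infty)$. First, \eqref{2.1} follows by shifting the averaging window: for each $x$,
$$\int_x^\infty q\ge\int_x^{x+2a}q=\int_{(x+a)-a}^{(x+a)+a}q\ge q_0(a)>0,$$
and symmetrically $\int_{-\infty}^x q>0$. Next I would bound $\hat d(x)$ uniformly by a case split. If $\hat d(x)>a$, then by monotonicity $\int_{x-\hat d(x)}^{x+\hat d(x)}q\ge\int_{x-a}^{x+a}q\ge q_0(a)$, so from the defining relation $\hat d(x)\int_{x-\hat d(x)}^{x+\hat d(x)}q=2$ we obtain $\hat d(x)\le 2/q_0(a)$; and if $\hat d(x)\le a$ there is nothing to prove. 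Hence $\hat d(x)\le\max\{a,\,2/q_0(a)\}$ for all $x$, giving $\hat d_0<\infty$.

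Combining the two implications with \thmref{thm1.2} yields the stated criterion. I do not expect a genuine obstacle here: the only points requiring care are invoking \lemref{lem2.1} to ensure $\hat d(x)$ is well defined before one speaks of $\hat d_0$, and the elementary case split in the uniform bound for $\hat d(x)$. The substantive difficulty of the theorem --- the equivalence between admissibility of $\{L_p,L_p\}$ and a pointwise integral condition on $q$ --- is precisely what \thmref{thm1.2} (and, through \thmref{thm2.5}, the operator-theoretic machinery behind it) supplies.
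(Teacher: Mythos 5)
Your proposal is correct, but a comparison with ``the paper's own proof'' is not possible here: Theorem~\ref{thm2.6} is quoted as a preliminary from \cite{3}, with no proof given in this paper --- exactly as Theorem~\ref{thm1.2} is. What you have actually done is show that the two quoted criteria from \cite{3} are equivalent to one another by elementary means, so that whichever of them one takes as the ``hard'' analytic input, the other follows for free. Both directions of your equivalence check out: given \eqref{2.1} and $\hat d_0<\infty$, the choice $a=\hat d_0$ together with the defining relation $\hat d(x)\int_{x-\hat d(x)}^{x+\hat d(x)}q(\xi)\,d\xi=2$ and monotonicity of $t\mapsto\int_{x-t}^{x+t}q$ (valid since $q\ge0$) gives $q_0(a)\ge 2/\hat d_0>0$; conversely, $q_0(a)>0$ yields \eqref{2.1} by the window-shifting argument (which is needed \emph{before} invoking Lemma~\ref{lem2.1}, and you correctly order it that way), and then the case split $\hat d(x)\le a$ versus $\hat d(x)>a$ gives the uniform bound $\hat d(x)\le\max\{a,2/q_0(a)\}$. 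Combined with Theorem~\ref{thm1.2}, this proves the statement. The one caveat worth recording is that your proof is relative: all the genuine analytic content (correct solvability $\Leftrightarrow$ boundedness of the Green operator, etc.)\ is absorbed into Theorem~\ref{thm1.2}, which this paper also does not prove; so your argument establishes the equivalence of the two formulations rather than an independent proof of either --- which is a perfectly sensible reading of what a ``proof'' of a cited preliminary can mean in this context.
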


\begin{thm}\label{thm2.7} \cite{11}
 Let $\mu$ and $\theta$ be continuous positive functions in $\mathbb R,$ and let $H$ be an integral operator
 \begin{equation}\label{2.15}
 (Hf)(t)=\mu(t)\int_t^\infty\theta(\xi)f(\xi)d\xi,\quad t\in\mathbb R.
 \end{equation}
 For $p\in(1,\infty)$, the operator $H:L_p\to L_p$ is bounded if and only if $H_p<\infty.$ Here $H_p=\sup\limits_{x\in\mathbb R}H_p(x),$
  \begin{equation}\label{2.16}
 H_p(x)=\left(\int_{-\infty}^x\mu(t)^pdt\right)^{1/p}\cdot\left(\int_x^\infty\theta(t)^{p'}dt\right)
 ^{1/p'},\quad p'=\frac{p}{p-1}.
 \end{equation}
 In addition,
  \begin{equation}\label{2.17}
 H_p\le\|H\|_{p\to p}\le (p)^{1/p}(p')^{1/p'}H_p.
 \end{equation}
 \end{thm}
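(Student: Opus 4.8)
The statement is a weighted Hardy inequality of Muckenhoupt type for the ``reversed'' Hardy operator $H$, and my plan is to prove the two inequalities of \eqref{2.17} separately, the equivalence being a byproduct: $H$ is bounded iff $\|H\|_{p\to p}<\infty$, and the sandwich $H_p\le\|H\|_{p\to p}\le p^{1/p}(p')^{1/p'}H_p$ then shows this is the same as $H_p<\infty$. Throughout I would set
\[
U(x)=\int_{-\infty}^x\mu(t)^p\,dt,\qquad W(x)=\int_x^\infty\theta(t)^{p'}\,dt,
\]
so that $H_p(x)=U(x)^{1/p}W(x)^{1/p'}$ and the hypothesis $H_p<\infty$ reads $U(x)W(x)^{p-1}\le H_p^p$ for all $x$. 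Since $|Hf|\le H|f|$ pointwise, it suffices to treat $0\le f$.

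For the lower bound $H_p\le\|H\|_{p\to p}$, which also gives the ``only if'' direction, I would test $H$ on $f=\theta^{\,p'-1}\chi_{[x,N]}$. For $t\le x$ one has $\int_t^\infty\theta f=\int_x^N\theta^{p'}$, whence $\|Hf\|_p^p\ge U(x)\big(\int_x^N\theta^{p'}\big)^p$, while $\|f\|_p^p=\int_x^N\theta^{(p'-1)p}=\int_x^N\theta^{p'}$ because $(p'-1)p=p'$. Dividing gives $\|H\|_{p\to p}^p\ge U(x)\big(\int_x^N\theta^{p'}\big)^{p-1}$; letting $N\to\infty$ and taking the supremum over $x$ yields $\|H\|_{p\to p}\ge H_p$ (and if $W(x)=\infty$ for some $x$ the same computation forces $\|H\|_{p\to p}=\infty$).

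The substantive direction is the upper bound. Assuming $H_p<\infty$, I would estimate $\int\mu^p\big(\int_t^\infty\theta f\big)^p\,dt$ by inserting the weight $W^{-\beta}$, with a parameter $\beta<0$ to be optimized, and applying H\"older to the inner integral:
\[
\int_t^\infty\theta f\,d\xi\le\Big(\int_t^\infty f^p W^{-\beta p}\,d\xi\Big)^{1/p}\Big(\int_t^\infty\theta^{p'}W^{\beta p'}\,d\xi\Big)^{1/p'}.
\]
Since $W'=-\theta^{p'}$, the second factor integrates explicitly to $\big(W(t)^{\beta p'+1}/(\beta p'+1)\big)^{1/p'}$ (valid for $\beta p'+1>0$). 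Raising to the $p$-th power, multiplying by $\mu(t)^p$, integrating in $t$ and applying Tonelli's theorem, I reduce matters to the inner integral $\int_{-\infty}^\xi\mu(t)^pW(t)^{\gamma}\,dt$ with $\gamma=\beta p+(p-1)$. Integrating this by parts (using $U'=\mu^p$) and invoking $U(t)\le H_p^pW(t)^{1-p}$ gives $\int_{-\infty}^\xi\mu^pW^{\gamma}\le\frac{p-1}{p-1-\gamma}H_p^pW(\xi)^{\gamma-(p-1)}$, where the constraint $\gamma<p-1$, i.e. $\beta<0$, is exactly what makes the boundary term at $-\infty$ vanish and the resulting integral converge.

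The algebraic miracle is that the powers of $W(\xi)$ cancel: the factor $W(\xi)^{-\beta p}$ from H\"older meets $W(\xi)^{\gamma-(p-1)}$ from the inner estimate, and $\gamma-(p-1)=\beta p$ since $p/p'=p-1$, so what remains is precisely $\int f^p$. This works for every admissible $\beta\in(-1/p',0)$, leaving $\|Hf\|_p^p\le C(\beta)\,H_p^p\|f\|_p^p$ with an explicit $C(\beta)$; minimizing over $\beta$ puts the optimum at $\beta=-1/(pp')$ and produces the constant $p^{1/p}(p')^{1/p'}$ of \eqref{2.17}. I expect the main obstacle to be the rigorous justification of the boundary terms and of the Tonelli/integration-by-parts steps when $U(\infty)$ or $W(-\infty)$ is infinite; this I would handle by first working on truncated intervals $[-N,N]$, proving the bound uniformly in $N$, and passing to the limit by monotone convergence. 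The hypothesis $p\in(1,\infty)$ is essential, since it is exactly what makes $p'$ finite so that the H\"older step and the dual weight $\theta^{p'}$ are available.
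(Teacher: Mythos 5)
The paper itself offers no proof of this theorem: it is quoted verbatim from Kufner--Persson \cite{11} as a known criterion, so your proposal has to stand on its own merits --- and it does. What you have reconstructed is essentially the classical Muckenhoupt-type argument for the weighted Hardy inequality, correctly adapted to the ``reversed'' operator on the whole line. The lower bound via $f=\theta^{p'-1}\chi_{[x,N]}$ is exactly right: $(p'-1)p=p'$ gives $\|f\|_p^p=\int_x^N\theta^{p'}$, and the degenerate cases ($W(x)=\infty$ or $U(x)=\infty$, in your notation $U(x)=\int_{-\infty}^x\mu^p$, $W(x)=\int_x^\infty\theta^{p'}$) are absorbed into the same computation. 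In the upper bound the bookkeeping checks out: $(\beta p'+1)(p-1)=\beta p+(p-1)=\gamma$; the inner estimate $\int_{-\infty}^\xi\mu^pW^{\gamma}\le\frac{p-1}{p-1-\gamma}H_p^p\,W(\xi)^{\gamma-(p-1)}$ follows by parts together with $UW^{p-1}\le H_p^p$ (note the boundary term at $-\infty$ enters with a favorable sign, so it may simply be dropped rather than shown to vanish); the exponents cancel because $\gamma-(p-1)=\beta p$; and since $p-1-\gamma=-\beta p$, the constant is $C(\beta)=\frac{p-1}{(-\beta p)(\beta p'+1)^{p-1}}$. Your choice $\beta=-1/(pp')$ is indeed the critical point (because $p+p'=pp'$) and yields $C=(p-1)(p')^{p}=p(p')^{p-1}$, hence $\|H\|_{p\to p}\le p^{1/p}(p')^{1/p'}H_p$, exactly the constant in \eqref{2.17}. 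Two small points worth making explicit when you write this up: under $H_p<\infty$ the positivity of $\mu$ forces $W(x)<\infty$ for every $x$ and $W(+\infty)=0$, which is what legitimizes the closed-form integration of $\theta^{p'}W^{\beta p'}$; and the Tonelli and limiting steps are unproblematic since all integrands are nonnegative and $U,W$ are $C^1$. So: correct, self-contained, and in substance the proof found in the cited monograph --- the paper merely chose citation over reproof for this classical fact.
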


 \begin{thm}\label{thm2.8} \cite{11}
  Let $\mu$ and $\theta$ be continuous positive functions in $\mathbb R,$ and let $\tilde H$ be an integral operator
   \begin{equation}\label{2.18}
  (\tilde Hf)(t)=\mu(t)\int_{-\infty}^t\theta(\xi)f(\xi)d\xi,\quad t\in\mathbb R.
 \end{equation}
 For $p\in(1,\infty)$ the operator $\tilde H:L_p\to L_p$ is bounded if and only if $\tilde H_p<\infty.$ Here $\tilde H_p=\sup_{x\in\mathbb R}\tilde H_p(x)$
  \begin{equation}\label{2.19}
  \tilde H_p(x)=\left[\int_{-\infty}^x\theta(t)^{p'}dt\right]^{1/p'}\cdot\left[\int_x^\infty\mu(t)^p dt\right]^{1/p},\quad p'=\frac{p}{p-1}.
 \end{equation}
 In addition,
  \begin{equation}\label{2.20}
 \tilde H_p\le\|\tilde H\|_{p\to p}\le(p)^{1/p}(p')^{1/p'}\tilde H_p.
 \end{equation}
\end{thm}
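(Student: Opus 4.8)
The plan is to reduce Theorem 2.8 to the already-established Theorem 2.7 by means of the reflection $x\mapsto -x$, which is an isometry of $L_p$. Introduce the operator $R$ on $L_p(\mathbb R)$ defined by $(Rf)(x)=f(-x)$; since $\|Rf\|_p=\|f\|_p$ for every $f\in L_p$ and $R^2=\id$, the operator $R$ is an isometric involution, so that $\|RAR\|_{p\to p}=\|A\|_{p\to p}$ for any bounded $A$. The key observation is that conjugating $\tilde H$ by $R$ produces an operator of the type treated in Theorem 2.7, but with the reflected weights $\mu(-\cdot)$ and $\theta(-\cdot)$.

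First I would compute $R\tilde H R$ explicitly. Writing $(R\tilde H Rf)(t)=(\tilde H Rf)(-t)$ and substituting $\xi\mapsto-\eta$ in the inner integral, one finds
\[
(R\tilde H Rf)(t)=\mu(-t)\int_t^\infty\theta(-\eta)f(\eta)\,d\eta.
\]
This is precisely the operator $H$ of Theorem 2.7 associated with the continuous positive weights $\mu_*(t):=\mu(-t)$ and $\theta_*(t):=\theta(-t)$. Hence $\tilde H:L_p\to L_p$ is bounded if and only if $H$ (for the weights $\mu_*,\theta_*$) is bounded, and their operator norms coincide.

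Next I would apply Theorem 2.7 to the pair $\mu_*,\theta_*$. Its criterion is the finiteness of $\sup_x\big(\int_{-\infty}^x\mu_*(t)^p\,dt\big)^{1/p}\big(\int_x^\infty\theta_*(t)^{p'}\,dt\big)^{1/p'}$. Substituting $t\mapsto-s$ in both integrals gives $\int_{-\infty}^x\mu_*^p=\int_{-x}^\infty\mu^p$ and $\int_x^\infty\theta_*^{p'}=\int_{-\infty}^{-x}\theta^{p'}$; replacing the supremum variable $x$ by $-x$, which again ranges over all of $\mathbb R$, turns this quantity into exactly $\tilde H_p$ of \eqref{2.19}. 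Because $R$ preserves the $L_p$ norm, the two-sided norm bound \eqref{2.17} for $H$ transfers verbatim to $\tilde H$, yielding \eqref{2.20} with the same constant $(p)^{1/p}(p')^{1/p'}$.

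The argument is essentially bookkeeping, so I do not expect a genuine obstacle; the only point requiring care is the careful tracking of orientation and limits of integration under the two changes of variables, one inside the operator and one in the defining supremum, so that the roles of $\mu$ and $\theta$ and of the exponents $p$ and $p'$ match \eqref{2.19} exactly rather than appearing with swapped limits. Once the conjugation identity and this substitution are verified, the reduction is complete.
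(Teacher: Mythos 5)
Your proposal is correct, but note that the paper gives no proof of this statement at all: Theorem~\ref{thm2.8}, like Theorem~\ref{thm2.7}, is simply quoted from \cite{11}, where the two dual Hardy-type inequalities are established independently by direct arguments. Your argument is therefore a genuinely different route: it derives Theorem~\ref{thm2.8} as a corollary of Theorem~\ref{thm2.7}, so that only one of the pair needs to be imported from the literature. The mechanism checks out completely: $(Rf)(x)=f(-x)$ is an isometric involution of $L_p$, hence $\|R\tilde HR\|_{p\to p}=\|\tilde H\|_{p\to p}$, and your identity
\begin{equation*}
(R\tilde HRf)(t)=\mu(-t)\int_t^\infty\theta(-\eta)f(\eta)\,d\eta
\end{equation*}
exhibits $R\tilde HR$ as the operator $H$ of Theorem~\ref{thm2.7} for the reflected weights $\mu_*(t)=\mu(-t)$, $\theta_*(t)=\theta(-t)$, which are again continuous and positive. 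The substitution $t\mapsto-s$ in both integrals of \eqref{2.16} and the replacement $x\mapsto-x$ in the supremum turn $H_p$ for the pair $\{\mu_*,\theta_*\}$ into exactly $\tilde H_p$ of \eqref{2.19} --- this bookkeeping is also precisely why $\mu$ and $\theta$ appear with exchanged limits of integration in \eqref{2.19} relative to \eqref{2.16} --- and since both the operator norm and the supremum are preserved exactly, not merely up to constants, the two-sided estimate \eqref{2.17} transfers to \eqref{2.20} with the identical constant $(p)^{1/p}(p')^{1/p'}$. What your reduction buys is economy and automatic consistency of the constants between the two theorems; its only cost is that it presupposes Theorem~\ref{thm2.7}, which the paper in any case states as known.
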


\begin{thm}\label{thm2.9} \cite{10a}
 Let  $-\infty\le a<b\le \infty,$ let $K(x,t)$ be a continuous function for $s,t\in(a,b), $ and let $K$ be an integral operator
 \begin{equation}\label{2.21}
 (Kf)(t)=\int_a^bK(s,t)f(s)ds,\quad t\in(a,b).
 \end{equation}
 Then we have the inequality
  \begin{equation}\label{2.22}
 \|K\|_{L_1(a,b)\to L_1(a,b)}=\sup_{s\in(a,b)}\int_a^b|K(s,t)|dt.
 \end{equation}
 \end{thm}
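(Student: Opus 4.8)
The plan is to establish separately the two inequalities $\|K\|_{L_1(a,b)\to L_1(a,b)}\le M$ and $\|K\|_{L_1(a,b)\to L_1(a,b)}\ge M$, where I abbreviate $M\doe\sup_{s\in(a,b)}\int_a^b|K(s,t)|\,dt$. Together these give the claimed equality \eqref{2.22}.

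For the upper bound I would take an arbitrary $f\in L_1(a,b)$ and estimate directly, using the triangle inequality for integrals:
\[
\|Kf\|_{L_1(a,b)}=\int_a^b\left|\int_a^bK(s,t)f(s)\,ds\right|dt\le\int_a^b\int_a^b|K(s,t)|\,|f(s)|\,ds\,dt.
\]
Since the integrand is nonnegative, Tonelli's theorem permits interchanging the order of integration, and then I factor out the supremum:
\[
\|Kf\|_{L_1(a,b)}\le\int_a^b|f(s)|\left(\int_a^b|K(s,t)|\,dt\right)ds\le M\int_a^b|f(s)|\,ds=M\|f\|_{L_1(a,b)}.
\]
This yields $\|K\|_{L_1(a,b)\to L_1(a,b)}\le M$ (vacuous but harmless when $M=\infty$).

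For the lower bound the idea is to feed the operator a sequence of normalized bumps concentrating at a point where the inner integral nearly attains $M$. Fix any $s_0\in(a,b)$, choose intervals $I_n\subset(a,b)$ shrinking to $\{s_0\}$, and set $f_n=|I_n|^{-1}\mathbf 1_{I_n}$, so that $\|f_n\|_{L_1(a,b)}=1$. Then
\[
\|Kf_n\|_{L_1(a,b)}=\int_a^b\left|\frac{1}{|I_n|}\int_{I_n}K(s,t)\,ds\right|dt,
\]
and by continuity of $K$ the averaged kernel $|I_n|^{-1}\int_{I_n}K(s,t)\,ds$ converges, for each fixed $t$, to $K(s_0,t)$ as $n\to\infty$. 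Applying Fatou's lemma to the nonnegative integrands gives
\[
\liminf_{n\to\infty}\|Kf_n\|_{L_1(a,b)}\ge\int_a^b\liminf_{n\to\infty}\left|\frac{1}{|I_n|}\int_{I_n}K(s,t)\,ds\right|dt=\int_a^b|K(s_0,t)|\,dt.
\]
Since the left-hand side is bounded above by $\|K\|_{L_1(a,b)\to L_1(a,b)}$ and $s_0\in(a,b)$ is arbitrary, taking the supremum over $s_0$ yields $\|K\|_{L_1(a,b)\to L_1(a,b)}\ge M$, and combined with the upper bound this proves the theorem.

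The upper bound is routine — essentially a single application of Tonelli. The delicate step, and the one I expect to be the main obstacle, is the lower bound, where one must pass to the limit under an integral over the possibly unbounded domain $(a,b)$ and through the absolute value. I would resolve this by invoking Fatou's lemma rather than dominated convergence: this avoids having to produce an integrable majorant uniformly in $n$ and handles the infinite-interval cases $a=-\infty$ or $b=\infty$ without modification; the continuity hypothesis on $K$ enters only through the pointwise convergence of the kernel averages to $K(s_0,\cdot)$.
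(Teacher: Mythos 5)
Your proof is correct. Note, however, that the paper does not prove this statement at all: Theorem~2.9 is quoted as a known result from Kantorovich--Akilov (reference [10a] in the bibliography), so there is no internal proof to compare against; what you have supplied is a complete, self-contained argument for a fact the authors simply cite. Both halves of your argument are sound. The upper bound $\|K\|_{L_1\to L_1}\le M$ is indeed a one-line application of Tonelli's theorem, valid also when $M=\infty$. For the lower bound, your bump-function argument works: for $f_n=|I_n|^{-1}\mathbf{1}_{I_n}$ with $I_n\subset(a,b)$ shrinking to $s_0$, the averages $|I_n|^{-1}\int_{I_n}K(s,t)\,ds$ converge pointwise in $t$ to $K(s_0,t)$ by continuity of $K$ in its first variable, and Fatou's lemma is exactly the right tool to pass to the limit, since it requires no integrable majorant and is insensitive to $a=-\infty$ or $b=\infty$; this gives $\|K\|_{L_1\to L_1}\ge\int_a^b|K(s_0,t)|\,dt$ for every $s_0$, hence $\ge M$. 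One further point your argument quietly (and correctly) resolves: for general measurable kernels this norm identity holds with an \emph{essential} supremum in $s$, and it is the continuity hypothesis that upgrades it to the pointwise supremum appearing in \eqref{2.22} --- your construction uses continuity precisely at that step, so the hypothesis is used where it must be.
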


 \section{Auxiliary assertions}

 In this section, we mainly present the properties of the function $d(x)$, $x\in\mathbb R$ (see \lemref{lem2.1}). Here we assume that condition \eqref{2.1} is satisfied, and we do not include it in the statements.

 \begin{lem}\label{lem3.1}
 The function $d(x)$ is continuously differentiable for all $x\in\mathbb R,$ and the following inequality holds:
 \begin{equation}\label{3.1}
 \sqrt 2|d'(x)|\le 1,\quad x\in\mathbb R.
 \end{equation}
 \end{lem}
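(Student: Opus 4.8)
The plan is to treat $d(x)$ as a function defined implicitly and to apply the implicit function theorem. Introduce
$$F(x,d)=\int_0^{\sqrt 2 d}\int_{x-t}^{x+t}q(\xi)\,d\xi\,dt,$$
so that by \lemref{lem2.1} the value $d(x)$ is the unique positive root of $F(x,d)=2$. The first task is to put $F$ into a shape that can be differentiated even though $q$ is merely $L_1^{\loc}$. Fixing a base point and setting $\Phi(s)=\int_{x_0}^{s}q(\xi)\,d\xi$, which is absolutely continuous and hence continuous, the substitutions $u=x+t$ and $w=x-t$ in the inner integral give
$$F(x,d)=\int_x^{x+\sqrt 2 d}\Phi(u)\,du-\int_{x-\sqrt 2 d}^{x}\Phi(w)\,dw.$$

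From this representation the partial derivatives follow at once from the fundamental theorem of calculus, and they are continuous in $(x,d)$ because $\Phi$ is continuous:
$$\frac{\partial F}{\partial d}=\sqrt 2\int_{x-\sqrt 2 d}^{x+\sqrt 2 d}q(\xi)\,d\xi,\qquad \frac{\partial F}{\partial x}=\int_x^{x+\sqrt 2 d}q(\xi)\,d\xi-\int_{x-\sqrt 2 d}^{x}q(\xi)\,d\xi.$$
Thus $F\in C^1(\mathbb R\times(0,\infty))$. At the point $(x,d(x))$ the inner integrand $g(t)=\int_{x-t}^{x+t}q$ is nonnegative and nondecreasing in $t$, and $\int_0^{\sqrt 2 d(x)}g(t)\,dt=F(x,d(x))=2>0$ forces $g(\sqrt 2 d(x))=\int_{x-\sqrt 2 d(x)}^{x+\sqrt 2 d(x)}q>0$; hence $\partial F/\partial d\neq 0$ there. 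The implicit function theorem then produces a local $C^1$ solution of $F(x,d)=2$, which by the uniqueness in \lemref{lem2.1} coincides with $d(x)$, so $d\in C^1(\mathbb R)$.

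The inequality is then read off from the derivative formula
$$d'(x)=-\frac{\partial F/\partial x}{\partial F/\partial d}=-\frac{\displaystyle\int_x^{x+\sqrt 2 d}q-\int_{x-\sqrt 2 d}^{x}q}{\displaystyle\sqrt 2\int_{x-\sqrt 2 d}^{x+\sqrt 2 d}q}.$$
Writing $a=\int_x^{x+\sqrt 2 d}q\ge 0$ and $b=\int_{x-\sqrt 2 d}^{x}q\ge 0$, the denominator is $\sqrt 2\,(a+b)$ and the numerator is $-(a-b)$; since $|a-b|\le a+b$ for nonnegative $a,b$, this yields $|d'(x)|\le 1/\sqrt 2$, i.e. $\sqrt 2\,|d'(x)|\le 1$.

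The main obstacle is the regularity bookkeeping under the weak hypothesis $q\in L_1^{\loc}$: one cannot differentiate under the inner integral directly, since $q(x\pm t)$ has no pointwise meaning, so the rewriting of $F$ through the continuous antiderivative $\Phi$ is exactly what legitimizes both the $C^1$ conclusion and the clean formula for $d'(x)$. Once that reduction is in place, checking $\partial F/\partial d\neq 0$ via \eqref{2.1} (through $F(x,d(x))=2>0$) and deriving the bound from $|a-b|\le a+b$ are both elementary.
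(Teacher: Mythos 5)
Your proof is correct and follows essentially the same route as the paper: both apply the implicit function theorem to $F(x,d)=2$ and then bound $|d'(x)|$ via the identity $d'(x)=-\bigl(\int_x^{x+\sqrt2 d}q-\int_{x-\sqrt2 d}^x q\bigr)\big/\bigl(\sqrt2\int_{x-\sqrt2 d}^{x+\sqrt2 d}q\bigr)$ together with $|a-b|\le a+b$. The only difference is that the paper defers the differentiability bookkeeping to cited references, whereas you carry it out explicitly (the rewriting through the antiderivative $\Phi$ and the nondegeneracy check $\partial F/\partial d\neq 0$), which makes your version self-contained.
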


 \begin{remark}\label{rem3.2}
 It is interesting to compare estimate \eqref{2.8} (see also \eqref{2.12}) with estimate \eqref{3.1}.
 \end{remark}

 \begin{lem}\label{lem3.3}
 For $x\in\mathbb R,$ we have the inequalities
 \begin{equation}\label{3.2}
 4^{-1}d(x)\le d(t)\le 4d(x),\quad\text{if}\quad |t-x|\le d(x).
 \end{equation}
 \end{lem}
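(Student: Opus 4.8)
The plan is to derive everything from the uniform Lipschitz bound already established in Lemma~\ref{lem3.1}. By \eqref{3.1} we have $|d'(\xi)| \le 1/\sqrt{2}$ for every $\xi \in \mathbb{R}$, so $d$ is globally Lipschitz with constant $1/\sqrt{2}$. Fixing $x$ and any $t$ with $|t-x| \le d(x)$, I would apply the fundamental theorem of calculus (legitimate since $d \in C^1(\mathbb{R})$ by Lemma~\ref{lem3.1}) on the segment joining $x$ and $t$, obtaining
$$
|d(t) - d(x)| = \left|\int_x^t d'(\xi)\,d\xi\right| \le \frac{1}{\sqrt{2}}\,|t-x| \le \frac{1}{\sqrt{2}}\,d(x),
$$
where the final inequality is precisely the hypothesis $|t-x|\le d(x)$. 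Note that this estimate is insensitive to whether $t<x$ or $t>x$.

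Rearranging this single inequality produces both bounds simultaneously:
$$
\left(1 - \frac{1}{\sqrt{2}}\right) d(x) \le d(t) \le \left(1 + \frac{1}{\sqrt{2}}\right) d(x).
$$
It then remains only to compare numerical constants: since $1 - 1/\sqrt{2} = 0.2929\ldots > 1/4$ and $1 + 1/\sqrt{2} = 1.7071\ldots < 4$, the claimed two-sided estimate \eqref{3.2} follows at once. In fact the argument yields the sharper factors $1 \pm 1/\sqrt{2}$; the rounder constants $1/4$ and $4$ in the statement are presumably chosen for convenience of later use.

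I expect no genuine obstacle here: once Lemma~\ref{lem3.1} supplies the uniform derivative bound, the result is an immediate consequence of Lipschitz continuity together with the hypothesis $|t-x|\le d(x)$, and the only thing left to verify is the elementary pair of inequalities $1/4 \le 1 - 1/\sqrt{2}$ and $1 + 1/\sqrt{2} \le 4$. Should one prefer to avoid invoking the $C^1$-regularity, the same conclusion follows from the mean value theorem applied to $d$ on $[\min(x,t),\max(x,t)]$, but the integral form above is the cleanest route.
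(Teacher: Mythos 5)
Your proof is correct and follows essentially the same route as the paper: the paper also deduces $|d(t)-d(x)|\le d(x)/\sqrt{2}$ from the derivative bound \eqref{3.1} (via Lagrange's formula rather than the integral form, a trivial difference) and then relaxes the factors $1\pm 1/\sqrt{2}$ to $1/4$ and $4$. Nothing is missing.
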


  \begin{lem}\label{lem3.4}
 For $x\in\mathbb R,$ we have the inequalities (see \thmref{thm2.2}):
 \begin{equation}\label{3.3}
 c^{-1}\le\frac{u(t)}{u(x)};\ \frac{v(t)}{v(x)};\quad \frac{\rho(t)}{\rho(x)}\le c\quad\text{if}\quad |t-x|\le d(x).
 \end{equation}
 \end{lem}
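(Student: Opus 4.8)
The plan is to reduce everything to two facts: that the ratio $\rho(t)/\rho(x)$ is bounded above and below on the interval $|t-x|\le d(x)$, and that the integral $\int_x^t d\xi/\rho(\xi)$ is bounded in absolute value there. I would dispatch the estimate for $\rho$ first. Combining the Otelbaev inequalities \eqref{2.12} (see \thmref{thm2.4}) with \lemref{lem3.3}, for $|t-x|\le d(x)$ one has
$$\frac{\rho(t)}{\rho(x)}\le\frac{\sqrt2\,d(t)}{d(x)/(2\sqrt2)}=4\,\frac{d(t)}{d(x)}\le16,$$
and, symmetrically, $\rho(t)/\rho(x)\ge1/16$. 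This already gives the middle chain in \eqref{3.3}, and in passing yields $1/4\le\sqrt{\rho(t)/\rho(x)}\le4$.

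Next I would transfer this control to $u$ and $v$ through the Davies--Harrell representations \eqref{2.10} (see \thmref{thm2.3}). Forming the quotients of the formulas at $t$ and at $x$, the base point $x_0$ drops out and
$$\frac{u(t)}{u(x)}=\sqrt{\frac{\rho(t)}{\rho(x)}}\exp\left(-\frac12\int_x^t\frac{d\xi}{\rho(\xi)}\right),\qquad\frac{v(t)}{v(x)}=\sqrt{\frac{\rho(t)}{\rho(x)}}\exp\left(\frac12\int_x^t\frac{d\xi}{\rho(\xi)}\right).$$
So both ratios are pinned between absolute constants as soon as the exponent is bounded.

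To bound the exponent, I note that any $\xi$ between $x$ and $t$ obeys $|\xi-x|\le|t-x|\le d(x)$; hence \lemref{lem3.3} gives $d(\xi)\ge d(x)/4$, and the lower Otelbaev bound in \eqref{2.12} then gives $\rho(\xi)\ge d(\xi)/(2\sqrt2)\ge d(x)/(8\sqrt2)$. Consequently
$$\left|\int_x^t\frac{d\xi}{\rho(\xi)}\right|\le|t-x|\cdot\frac{8\sqrt2}{d(x)}\le8\sqrt2.$$
Feeding this uniform bound and $1/4\le\sqrt{\rho(t)/\rho(x)}\le4$ into the two displayed quotients yields absolute upper and lower constants for $u(t)/u(x)$ and $v(t)/v(x)$, completing all three estimates in \eqref{3.3}.

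The argument is mostly bookkeeping and I expect no real obstacle; the single delicate point is that the lower bound on $\rho(\xi)$ has to hold uniformly for every $\xi$ on the segment joining $x$ and $t$ (not merely at the endpoints), and this is precisely guaranteed by \lemref{lem3.3}, whose range of validity $|t-x|\le d(x)$ is exactly the interval in which the whole proof takes place.
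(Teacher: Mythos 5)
Your proof is correct and follows essentially the same route as the paper's: both rest on the Davies--Harrell representations \eqref{2.10}, the Otelbaev inequalities \eqref{2.12}, and \lemref{lem3.3}, the key step in each case being the uniform bound on $\left|\int_x^t\frac{d\xi}{\rho(\xi)}\right|$ over $|t-x|\le d(x)$. The only cosmetic differences are that you obtain the bound on $\rho(t)/\rho(x)$ directly from \eqref{2.12} and \eqref{3.2} and feed it into the $u,v$ estimates, whereas the paper deduces the $\rho$ estimate at the end from those for $u$ and $v$ via $\rho=uv$ (see \eqref{2.8}), and that you bound the integrand pointwise where the paper uses a multiplicative decomposition under the integral.
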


  \begin{lem}\label{lem3.5}
 For a given $x\in\mathbb R,$ consider the function
 \begin{equation}\label{3.4}
 F(\eta)=\int_0^{\sqrt 2\eta}\int_{x-t}^{x+t}q(\xi)d\xi dt,\quad \eta\ge 0.
 \end{equation}
 The function $F(\eta)$ is differentiable and non-negative, together with its derivative, and
 \begin{equation}\label{3.5}
 F(0)=0,\qquad F(\infty)=\infty.
 \end{equation}
 In addition, the inequality $\eta\ge d(x)$ $(0\le\eta\le d(x))$ holds if and only if $F(\eta)\ge2$ $(F(\eta)\le 2).$
 \end{lem}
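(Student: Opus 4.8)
The plan is to treat $F$ as the integral of the nonnegative, locally integrable function $q$ and read off differentiability, sign, and monotonicity directly, then couple the monotonicity with the \emph{uniqueness} assertion of \lemref{lem2.1} to obtain the equivalences. The decisive fact I will use from \lemref{lem2.1} is that $d(x)$ is the unique positive root of $F(\eta)=2$, i.e. $F(d(x))=2$.

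First I would set $g(t)=\int_{x-t}^{x+t}q(\xi)\,d\xi$ for $t\ge0$. Since $q\in L_1^{\loc}(\mathbb R)$, absolute continuity of the Lebesgue integral shows that $g$ is continuous on $[0,\infty)$, and because $q\ge0$ it is nonnegative and nondecreasing. As $F(\eta)=\int_0^{\sqrt2\,\eta}g(t)\,dt$ is an integral of a continuous function, the fundamental theorem of calculus together with the chain rule gives $F\in C^1[0,\infty)$ with
\[
F'(\eta)=\sqrt2\,g(\sqrt2\,\eta)=\sqrt2\int_{x-\sqrt2\eta}^{x+\sqrt2\eta}q(\xi)\,d\xi\ge0.
\]
This settles differentiability of $F$ and nonnegativity of both $F$ and $F'$ simultaneously, and it shows that $F$ is nondecreasing; the identity $F(0)=0$ is immediate, the upper limit of integration being $0$.

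Next I would verify $F(\infty)=\infty$, which is the one place where condition \eqref{2.1} is genuinely needed. Because $\int_{-\infty}^x q(\xi)\,d\xi>0$, there is some $t_0>0$ with $c_0:=\int_{x-t_0}^x q(\xi)\,d\xi>0$; monotonicity of $g$ then yields $g(t)\ge g(t_0)\ge c_0$ for all $t\ge t_0$, whence $F(\eta)\ge c_0(\sqrt2\,\eta-t_0)\to\infty$ as $\eta\to\infty$ (either positivity condition in \eqref{2.1} would suffice). Finally, for the equivalences, monotonicity gives the two ``if'' directions at once: $\eta\ge d(x)$ implies $F(\eta)\ge F(d(x))=2$, and $0\le\eta\le d(x)$ implies $F(\eta)\le2$. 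For the converses I argue by contradiction: if $F(\eta)\ge2$ yet $\eta<d(x)$, then monotonicity forces $F(\eta)\le F(d(x))=2$, so $F(\eta)=2$, and since $F(0)=0$ we have $\eta>0$, making $\eta$ a second positive root of $F=2$, contradicting \lemref{lem2.1}; the case $F(\eta)\le2$ with $\eta>d(x)$ is symmetric.

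I expect the only delicate point to be precisely these converse implications. Since $q$ may vanish in a neighbourhood of $x$, the derivative $F'$ can be zero on an initial interval, so $F$ need not be strictly increasing and cannot simply be inverted. The uniqueness of the root $d(x)$ furnished by \lemref{lem2.1} is exactly what rules out $F$ being constantly equal to $2$ on a subinterval, and so it is this uniqueness—rather than any monotonicity estimate—that closes the argument.
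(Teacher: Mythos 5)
Your proof is correct and follows essentially the same route as the paper's: differentiability and the non-negativity of $F$ and $F'$ come from the fundamental theorem of calculus applied to the inner integral, and the equivalences come from the sign of $F'$ combined with $F(d(x))=2$. The paper compresses the final step into the Lagrange identity $F(\eta)-2=F'(\theta)(\eta-d(x))$, which by itself yields only the two forward implications (the mean-value point may have $F'=0$); your explicit appeal to the uniqueness assertion of \lemref{lem2.1} for the converse directions, and your verification of $F(\infty)=\infty$ from \eqref{2.1}, supply exactly the details the paper's sketch leaves implicit.
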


 \begin{lem}\label{lem3.6}
Let a function $f$ be defined on $\mathbb R$ and absolutely continuous together with its derivative. Then for all $x\in\mathbb R$ and $t\ge0,$ we have the equality
 \begin{equation}\label{3.6}
\int_{x-t}^{x+t}f(\xi)d\xi=2f(x)t+\int_0^t\int_0^{t_1}\int_{x-t_2}^{x+t_2}f''(t_3)dt_3dt_2dt_1.
 \end{equation}
 \end{lem}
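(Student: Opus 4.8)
The statement to prove is the identity
\[
\int_{x-t}^{x+t}f(\xi)\,d\xi=2f(x)t+\int_0^t\int_0^{t_1}\int_{x-t_2}^{x+t_2}f''(t_3)\,dt_3\,dt_2\,dt_1,
\]
valid for all $x\in\mathbb R$ and $t\ge0$, where $f$ and $f'$ are absolutely continuous. My plan is to treat both sides as functions of the single variable $t$ (with $x$ fixed) and verify that they agree at $t=0$ and have the same derivative for almost every $t\ge0$; since both sides are absolutely continuous in $t$, this forces equality. At $t=0$ both sides vanish trivially, so the whole content is in matching the derivatives.

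First I would define $\Phi(t)=\int_{x-t}^{x+t}f(\xi)\,d\xi$. Differentiating under the integral sign using the fundamental theorem of calculus (legitimate since $f$ is absolutely continuous, hence the integrand is locally integrable and the limits are smooth in $t$) gives $\Phi'(t)=f(x+t)+f(x-t)$. Because $f$ is absolutely continuous with $f'$ also absolutely continuous, I can write $f(x\pm t)$ via Taylor expansion with integral remainder about the base point $x$; the first-order terms $f(x)\pm t f'(x)$ combine to $2f(x)$ in the sum $f(x+t)+f(x-t)$, the terms $\pm tf'(x)$ cancel, and the remainders assemble into $\int_0^t\int_{x-t_2}^{x+t_2}f''(t_3)\,dt_3\,dt_2$. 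Concretely, $f(x+t)+f(x-t)-2f(x)=\int_0^t\bigl(f'(x+s)-f'(x-s)\bigr)\,ds=\int_0^t\int_{x-s}^{x+s}f''(\xi)\,d\xi\,ds$, using $f'(x+s)-f'(x-s)=\int_{x-s}^{x+s}f''(\xi)\,d\xi$. Thus $\Phi'(t)=2f(x)+\int_0^t\int_{x-t_2}^{x+t_2}f''(t_3)\,dt_3\,dt_2$.

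On the right-hand side, call it $\Psi(t)=2f(x)t+\int_0^t\int_0^{t_1}\int_{x-t_2}^{x+t_2}f''(t_3)\,dt_3\,dt_2\,dt_1$. Differentiating in $t$ by the fundamental theorem of calculus peels off the outermost integral and yields $\Psi'(t)=2f(x)+\int_0^t\int_{x-t_2}^{x+t_2}f''(t_3)\,dt_3\,dt_2$, which is exactly the expression just obtained for $\Phi'(t)$. Since $\Phi(0)=\Psi(0)=0$ and $\Phi'=\Psi'$ a.e.\ on $[0,\infty)$ with both functions absolutely continuous, we conclude $\Phi(t)=\Psi(t)$ for all $t\ge0$, which is the claimed identity.

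The only delicate point is justifying the differentiations under the integral sign and the Taylor-type expansion of $f(x\pm t)$ under the stated regularity (absolute continuity of $f$ and $f'$, rather than classical $C^2$). This is routine but must be handled carefully: the key facts are that an absolutely continuous function is the integral of its derivative, so $f'(x+s)-f'(x-s)=\int_{x-s}^{x+s}f''(\xi)\,d\xi$ holds for a.e.\ $s$ and both sides are absolutely continuous in $s$, and that $f(x+t)+f(x-t)-2f(x)$ is the integral of $f'(x+s)-f'(x-s)$ over $s\in[0,t]$. No step requires more than the absolute continuity hypothesis, so I expect no genuine obstacle—merely the bookkeeping of the iterated integrals and an appeal to the fact that two absolutely continuous functions with equal initial value and equal a.e.\ derivative coincide.
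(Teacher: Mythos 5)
Your proof is correct and is essentially the paper's argument in differentiated form: the paper writes the left-hand side directly as $\int_0^t[f(x+t_1)+f(x-t_1)]\,dt_1$ and expands the integrand via two applications of the fundamental theorem of calculus, which is exactly the identity $f(x+s)+f(x-s)-2f(x)=\int_0^s\int_{x-\sigma}^{x+\sigma}f''(\xi)\,d\xi\,d\sigma$ that you establish when matching $\Phi'(t)$ with $\Psi'(t)$. Replacing the paper's outermost integration step by your ``equal derivatives plus equal initial values'' packaging changes only the presentation, not the substance, so the two proofs coincide in all essentials.
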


 \begin{thm}\label{thm3.7}
 Suppose that condition \eqref{2.1} holds and the function $q(x)$ can be written in the form
 \begin{equation}\label{3.7}
 q(x)=q_1(x)+q_2(x),\quad x\in\mathbb R,
 \end{equation}
 where $q_1(x),$ $x\in\mathbb R,$ is positive and absolutely continuous together with its derivative, and $q_2\in L_1^{\loc}(\mathbb R).$ Denote
  \begin{gather}\label{3.8}
 A(x)=\left[0,\frac{2}{\sqrt{q_1(x)}}\right],\quad x\in\mathbb R,\\
 \label{3.9}
 \varkappa_1(x)=\frac{1}{q_1(x)^{3/2}}\sup_{t\in A(x)}\left|\int_{x-t}^{x+t}q_1''(\xi)d\xi\right|,\quad x\in\mathbb R,\\
 \label{3.10}
 \varkappa_2(x)=\frac{1}{\sqrt{q_1(x)}}\sup_{t\in A(x)}\left|\int_{x-t}^{x+t}q_2(\xi)d\xi\right|,\quad x\in\mathbb R.
 \end{gather}
 If we have the condition
 \begin{equation}\label{3.11}
 \varkappa_1(x)\to0,\quad \varkappa_2(x)\to0\quad\text{as}\quad |x|\to\infty,
 \end{equation}
 then the following relations hold:
 \begin{gather}
 d(x)\sqrt{q_1(x)}=1+\varepsilon(x),\qquad |\varepsilon(x)|\le 2(\varkappa_1(x)+\varkappa_2(x)),\qquad |x|\gg 1,\label{3.12}\\
 c^{-1}\le d(x)\sqrt{q_1(x)}\le c\quad\text{for all}\quad x\in\mathbb R.\label{3.13}
 \end{gather}
 \end{thm}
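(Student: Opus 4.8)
The plan is to read off the size of $d(x)$ from the defining identity $F(d(x))=2$ of \lemref{lem3.5}, isolating the contribution of the smooth part $q_1$ and treating everything else as a controllable error. First I would apply \lemref{lem3.6} with $f=q_1$ to write, for $t\ge 0$,
\[
\int_{x-t}^{x+t}q_1(\xi)\,d\xi=2q_1(x)\,t+R_1(x,t),\quad R_1(x,t)=\int_0^t\int_0^{t_1}\int_{x-t_2}^{x+t_2}q_1''(t_3)\,dt_3\,dt_2\,dt_1,
\]
and then integrate over $t\in[0,\sqrt2\,\eta]$ and add the $q_2$ term to obtain
\[
F(\eta)=2q_1(x)\,\eta^2+E_1(\eta)+E_2(\eta),\quad E_1(\eta)=\int_0^{\sqrt2\eta}R_1(x,t)\,dt,\quad E_2(\eta)=\int_0^{\sqrt2\eta}\int_{x-t}^{x+t}q_2(\xi)\,d\xi\,dt.
\]
The leading term alone solves $F=2$ precisely at $\eta=q_1(x)^{-1/2}$, which is the heuristic behind \eqref{3.12}.

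Next I would estimate the two errors in terms of $\varkappa_1,\varkappa_2$. As long as the inner radius stays in $A(x)=[0,2q_1(x)^{-1/2}]$, definitions \eqref{3.9}--\eqref{3.10} give $\big|\int_{x-s}^{x+s}q_1''\big|\le q_1(x)^{3/2}\varkappa_1(x)$ and $\big|\int_{x-s}^{x+s}q_2\big|\le q_1(x)^{1/2}\varkappa_2(x)$; inserting these and integrating yields $|E_1(\eta)|\le \tfrac{\sqrt2}{3}q_1(x)^{3/2}\eta^3\varkappa_1(x)$ and $|E_2(\eta)|\le\sqrt2\,q_1(x)^{1/2}\eta\,\varkappa_2(x)$, valid whenever $\sqrt2\,\eta\le 2q_1(x)^{-1/2}$. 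Writing $\eta=s\,q_1(x)^{-1/2}$ these collapse to the clean, $q_1$-free form $F=2s^2+E_1+E_2$ with $|E_1+E_2|\le\tfrac{\sqrt2}{3}s^3\varkappa_1(x)+\sqrt2\,s\,\varkappa_2(x)$ on the range $s\le\sqrt2$.

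The key obstacle is circularity: the error bounds require the integration radius to lie in $A(x)$, i.e. $s\le\sqrt2$, which is essentially what one wants to prove about $d(x)$. I would sidestep this by not solving for $d(x)$ directly but instead invoking the monotonicity of $F$ from \lemref{lem3.5}: it suffices to evaluate $F$ at the two a priori fixed points $\eta_\pm=(1\pm\varepsilon_0)q_1(x)^{-1/2}$ with $\varepsilon_0=2(\varkappa_1(x)+\varkappa_2(x))$. For $|x|\gg1$ the hypothesis \eqref{3.11} makes $\varepsilon_0$ small, so $1\pm\varepsilon_0<\sqrt2$ and the error bounds apply legitimately at $\eta_\pm$. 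A short computation, playing $2(1\pm\varepsilon_0)^2-2=\pm4\varepsilon_0+2\varepsilon_0^2$ against error terms bounded by a fixed multiple of $(\varkappa_1+\varkappa_2)$ strictly smaller than $4\varepsilon_0$, then gives $F(\eta_+)\ge2\ge F(\eta_-)$. Since $F$ is nondecreasing and $F(d(x))=2$, this sandwiches $\eta_-\le d(x)\le\eta_+$, which is exactly $|d(x)\sqrt{q_1(x)}-1|\le 2(\varkappa_1(x)+\varkappa_2(x))$, i.e. \eqref{3.12}.

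Finally, for the global two-sided bound \eqref{3.13} I would split $\mathbb{R}$ into a large region and its complement. On $\{|x|\ge R\}$ with $R$ chosen so that $\varepsilon_0<\tfrac12$, estimate \eqref{3.12} already forces $\tfrac12\le d(x)\sqrt{q_1(x)}\le\tfrac32$. On the compact set $\{|x|\le R\}$ the product $d(x)\sqrt{q_1(x)}$ is continuous (by \lemref{lem3.1} the function $d$ is $C^1$, and $q_1$ is continuous and positive) and strictly positive, hence bounded above and away from zero; combining the two regions gives \eqref{3.13}. The only genuinely delicate point is the bookkeeping of numerical constants in the sandwich step, which must be arranged so that the competing factor against $4\varepsilon_0$ stays strictly below it for $|x|\gg1$; everything else is routine.
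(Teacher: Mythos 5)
Your proposal is correct and follows essentially the same route as the paper: both decompose $F$ via \lemref{lem3.6}, bound the error terms by $\varkappa_1,\varkappa_2$, evaluate $F$ at the two test points $(1\pm 2(\varkappa_1+\varkappa_2))/\sqrt{q_1(x)}$, conclude $\eta_-\le d(x)\le\eta_+$ from \lemref{lem3.5}, and obtain \eqref{3.13} by combining \eqref{3.12} with positivity and continuity of $d\sqrt{q_1}$ on compact sets. Your explicit attention to the circularity issue (ensuring the test radii stay inside $A(x)$, which requires $1\pm\varepsilon_0\le\sqrt2$ and hence $|x|\gg1$) is a point the paper leaves implicit, but the underlying argument is the same.
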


\section{Main results}

Throughout the sequel we assume that our standing requirements to the functions $q$ (see \eqref{1.2}), and $\mu$ and $\theta$ (see \S1) are satisfied, and we do not mention them in the statements.

\begin{thm}\label{thm4.1}
Suppose that the function $q$ is nonnegative and continuous at every point of the real axis. Suppose that for a given $p\in[1,\infty)$ the following condition holds:
\begin{equation}\label{4.1}
\int_{-\infty}^0\mu(t)^pdt=\int_0^\infty \mu(t)^pdt=\infty.
\end{equation}
Then the pair $\{L_{p,\mu};L_{p,\theta}\}$ is admissible for \eqref{1.1} only if inequalities \eqref{2.1} hold.
\end{thm}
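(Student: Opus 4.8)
The plan is to prove the contrapositive: assuming the pair $\{L_{p,\mu},L_{p,\theta}\}$ is admissible, I will show that \eqref{2.1} must hold. Suppose instead that \eqref{2.1} fails. Since $q\ge 0$, this means that $q$ vanishes almost everywhere on a half-line; after the reflection $x\mapsto -x$ (which preserves equation \eqref{1.1}, the hypothesis \eqref{4.1}, and admissibility) I may assume $q\equiv 0$ on some $(-\infty,x_0)$. My goal is then to exhibit a single $f\in L_{p,\theta}$ for which no solution of \eqref{1.1} lies in $L_{p,\mu}$, contradicting requirement I) of Definition \ref{defn1.1}.

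First I fix the test function. Choose a nonnegative $f$ supported in a compact interval $[a,b]\subset(-\infty,x_0)$ with $\int_a^b f\,dx=1$; since $\theta$ is continuous, $f\in L_{p,\theta}$. The key preliminary observation, which uses only the left half of \eqref{4.1}, is that an affine function $A+Bx$ can belong to $L_{p,\mu}$ near $-\infty$ only if $A=B=0$: indeed if $B\neq 0$ the integrand dominates a constant multiple of $\mu^p$, while if $B=0\neq A$ it equals $|A|^p\mu^p$, and in both cases $\int_{-\infty}^{a}\mu^p\,dx=\infty$ forces divergence. Now assume $y\in L_{p,\mu}$ solves \eqref{1.1} with this $f$. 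On $(-\infty,a)$ we have $q=f=0$, so $y$ is affine, whence $y\equiv 0$ on $(-\infty,a]$ and in particular $y(a)=y'(a)=0$. Integrating $-y''=f$ forward over $[a,b]$ gives $y'(b)=-1$ and $y(b)=-\int_a^b\!\int_a^s f\le 0$; on the remaining interval $[b,x_0]$ the equation is again $y''=0$, so $y$ is affine with slope $-1$, and therefore $y(x_0)\le -(x_0-b)<0$ while $y'(x_0)=-1<0$.

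It remains to see that such Cauchy data cannot be continued into $L_{p,\mu}$ at $+\infty$, and this is where the right half of \eqref{4.1} enters through a maximum-principle argument. On $(x_0,\infty)$ we have $f=0$, so $y''=qy$ with $q\ge 0$. Starting from $y(x_0)<0$, $y'(x_0)<0$, as long as $y<0$ one has $y''=qy\le 0$, so $y'$ is nonincreasing and stays $\le y'(x_0)<0$; hence $y$ is strictly decreasing and can never return to $0$, giving $y(x)\le y(x_0)<0$ for all $x\ge x_0$. Consequently $|y|\ge|y(x_0)|>0$ on $(x_0,\infty)$, and since $\int_{x_0}^{\infty}\mu^p\,dx=\infty$ we obtain $\|y\|_{p,\mu}=\infty$, a contradiction. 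Thus no solution in $L_{p,\mu}$ exists for this $f$, the pair is not admissible, and the contrapositive is complete.

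The step I expect to be the main obstacle is precisely the control at $+\infty$: the natural temptation is to argue quantitatively, pushing the support of $f$ toward $-\infty$ and violating the stability estimate \eqref{1.5}, but that route requires comparing $\mu$ with $\theta$ and fails because $\theta$ is completely unconstrained by the hypotheses. The maximum-principle argument above circumvents this entirely, since it propagates the forced left-end sign and the lower bound $|y|\ge|y(x_0)|$ to the right. In this way only the single input \eqref{4.1} on $\mu$ together with $q\ge 0$ is used, with no structural assumption on $q$ to the right of $x_0$ and no relation between $\mu$ and $\theta$.
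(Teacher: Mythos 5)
Your proof is correct, and it is built from the same three ingredients as the paper's: the reduction (via $q\ge 0$ and continuity) to the case where $q$ vanishes identically on a half-line, the observation that \eqref{4.1} excludes every nonzero affine function from $L_{p,\mu}$ near the relevant infinity, and a convexity (maximum-principle) argument using $q\ge 0$ to propagate a nonzero value of the solution across the half-line where $q$ is uncontrolled. The execution, however, differs in an instructive way. The paper takes the zero-mass datum $f_0=-\varphi''$, where $\varphi$ is a smooth cutoff supported on the degenerate half-line and equal to $1$ far inside it; since $\varphi$ is then a particular solution, the putative solution is written as $y_0=\varphi+z$ with $z=c_1u+c_2x$-type homogeneous corrections, and the contradiction is reached in three stages: first $c_2=0$ (linear growth against $\int\mu^p=\infty$), then $c_1=0$ (maximum principle pushed to the opposite half-line), and finally $y_0=\varphi\equiv 1$ near infinity is itself not in $L_{p,\mu}$. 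You instead take a unit-mass bump placed strictly inside the degenerate half-line: the affine argument forces $y\equiv 0$ on the far side of the bump, the unit mass converts this into strict Cauchy data $y(x_0)<0$, $y'(x_0)<0$ at the edge of the degenerate region, and a single application of the maximum principle finishes the proof. Your route is more economical (one contradiction instead of three), and your maximum-principle step is also slightly easier than the paper's, since your data have strictly negative derivative and you therefore avoid the first-zero argument the paper needs to handle its data $z(x_0)=1$, $z'(x_0)=0$. Both arguments use only the existence half of requirement I) of Definition \ref{defn1.1} and nothing about $\theta$ beyond continuity and positivity.
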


To make our a priori requirements independent of the parameter $p\in[1,\infty)$, throughout the sequel we assume that together with \eqref{1.2}, condition \eqref{2.1} holds. Similar to \eqref{1.2}, below this condition is not quoted and does not appear in the statements.

\begin{lem}\label{lem4.2}
Suppose that the following condition holds:
\begin{equation}\label{4.2}
\int_{-\infty}^0\mu(t)dt=\int_0^\infty \mu(t)dt=\infty.
\end{equation}
Then for every $p\in[1,\infty)$ equation \eqref{2.4} has no solutions $z\in L_{p,\mu}$ apart from $z\equiv0.$
\end{lem}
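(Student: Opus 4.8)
The plan is to use the explicit fundamental system $\{u,v\}$ from \thmref{thm2.2} and reduce the claim to a growth estimate at one of the two infinities. Since \eqref{2.4} is a second order linear equation and $\{u,v\}$ is an FSS, every solution has the form $z=\alpha u+\beta v$ with constants $\alpha,\beta$, and $z\equiv0$ iff $\alpha=\beta=0$. So I would fix a solution with $(\alpha,\beta)\ne(0,0)$ and show $\|z\|_{p,\mu}=\infty$. The argument splits into two symmetric cases: if $\beta\ne0$ I work at $+\infty$, and if $\alpha\ne0$ I work at $-\infty$; since $(\alpha,\beta)\ne(0,0)$ at least one case applies.

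The key structural observation is that $u$ and $v$ grow at least linearly at the relevant infinity. Indeed, by \eqref{1.2} and \eqref{2.5} we have $v''=qv\ge0$ and $u''=qu\ge0$, so both $u$ and $v$ are convex, while \eqref{2.5} gives $v'>0$ and $u'<0$. Fixing $x_1$ and integrating the monotone derivative yields $v(x)\ge v(x_1)+v'(x_1)(x-x_1)$ for $x\ge x_1$ and $u(x)\ge u(x_1)+|u'(x_1)|(x_1-x)$ for $x\le x_1$; hence $v(x)\ge cx$ at $+\infty$ and $u(x)\ge c|x|$ at $-\infty$. Combining this with the relations \eqref{2.7}: in the case $\beta\ne0$, writing $z=v[\alpha(u/v)+\beta]$ and using $u/v\to0$ gives $|z(x)|\ge\tfrac12|\beta|v(x)\ge cx$ for all large $x$; in the case $\alpha\ne0$, writing $z=u[\alpha+\beta(v/u)]$ and using $v/u\to0$ gives $|z(x)|\ge\tfrac12|\alpha|u(x)\ge c|x|$ for all $x\ll-1$.

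It then remains to turn the hypothesis \eqref{4.2} into divergence of the weighted norm. Take the case $\beta\ne0$; there is $X>0$ with $|z(x)|\ge cx$ for $x\ge X$, so
\[
\|z\|_{p,\mu}^p\ge c^p\int_X^\infty x^p\mu(x)^p\,dx .
\]
For $p=1$ this is $\ge cX\int_X^\infty\mu\,dx=\infty$ by \eqref{4.2}. For $p>1$ I would argue by contradiction: if $\int_X^\infty(x\mu)^p\,dx<\infty$, then Hölder with exponents $p,p'$ gives
\[
\int_X^\infty\mu\,dx=\int_X^\infty(x\mu)\,x^{-1}\,dx\le\Big(\int_X^\infty(x\mu)^p\,dx\Big)^{1/p}\Big(\int_X^\infty x^{-p'}\,dx\Big)^{1/p'}<\infty,
\]
since $p'>1$ makes the last integral finite, contradicting \eqref{4.2}. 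Hence $\int_X^\infty(x\mu)^p\,dx=\infty$ and $z\notin L_{p,\mu}$. The case $\alpha\ne0$ is identical at $-\infty$, integrating over $(-\infty,-X)$ with $|x|$ in place of $x$ and using $\int_{-\infty}^0\mu=\infty$ together with the convergence of $\int_{-\infty}^{-X}|x|^{-p'}\,dx$.

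The main obstacle is precisely the gap between the two integrability scales: the hypothesis \eqref{4.2} controls only $\int\mu$ (the $p=1$ integral), whereas for $p>1$ one needs $\int\mu^p|z|^p=\infty$, and $\mu^p$ may well be integrable even when $\mu$ is not. The two ingredients that bridge this gap are the at-least-linear growth of $|z|$ (a consequence of convexity, i.e.\ of $q\ge0$) and the Hölder weight trick above, which together show that the factor $|x|^p$ coming from the growth of $z$ exactly compensates the loss in passing from $\mu$ to $\mu^p$.
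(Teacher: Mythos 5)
Your proof is correct and takes essentially the same approach as the paper: decompose $z=\alpha u+\beta v$, use \eqref{2.7} to factor out the dominant solution at the appropriate infinity, and upgrade $\int\mu\,dt=\infty$ to divergence of the $p$-th power integral by H\"older with the dual exponent $p'$, where convexity of $u,v$ (from $q\ge0$) supplies the integrable negative power. The only difference is cosmetic: you minorize $v$ (resp.\ $u$) by an explicit linear function and use $\int_X^\infty x^{-p'}dx<\infty$, whereas the paper keeps $v$ itself and proves $\int_0^\infty v(t)^{-p'}dt<\infty$ directly from the monotonicity of $v'$.
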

Note that for $\mu\equiv1$ \lemref{lem4.2} was proved in \cite{2}.

Our main result is the following.

\begin{thm}\label{thm4.3}
Suppose that condition \eqref{4.2} holds. Then the pair $\{L_{p,\mu};L_{p,\theta}\}$ is admissible for \eqref{1.1} if and only if the operator $S:L_p\to L_p$ is bounded. Here
\begin{equation}\label{4.3}
(Sf)(x)=\mu(x)\int_{-\infty}^\infty\frac{G(x,t)}{\theta(t)}f(t)dt,\quad x\in\mathbb R,\quad f\in L_p.
\end{equation}
\end{thm}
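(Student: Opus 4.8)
The plan is to conjugate the Green operator by two isometries and thereby reduce the admissibility of the weighted pair to the boundedness of $S$ on the unweighted space $L_p$. The multiplication operators $J_\theta\colon g\mapsto g/\theta$ and $J_\mu\colon y\mapsto\mu y$ are isometric isomorphisms $L_p\to L_{p,\theta}$ and $L_{p,\mu}\to L_p$, respectively, since $\|g/\theta\|_{p,\theta}=\|g\|_p$ and $\|y\|_{p,\mu}=\|\mu y\|_p$. The link between the two sides of the asserted equivalence is the factorization
\begin{equation*}
(Sg)(x)=\mu(x)\int_{-\infty}^\infty\frac{G(x,t)}{\theta(t)}\,g(t)\,dt=\mu(x)\,(G(g/\theta))(x),\qquad x\in\mathbb{R},
\end{equation*}
that is, $S=J_\mu\,G\,J_\theta$, where $G$ is the Green operator \eqref{2.13}. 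Consequently $\|Gf\|_{p,\mu}=\|S(\theta f)\|_p$ and $\|f\|_{p,\theta}=\|\theta f\|_p$, so the stability estimate \eqref{1.5} is, after this change of variables, nothing but $\|Sg\|_p\le c(p)\|g\|_p$.

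Uniqueness is immediate: if $y_1,y_2\in L_{p,\mu}$ both solve \eqref{1.1}, then $z=y_1-y_2\in L_{p,\mu}$ solves the homogeneous equation \eqref{2.4}, so $z\equiv0$ by \lemref{lem4.2} --- this is the only place where hypothesis \eqref{4.2} enters. The analytic heart of the proof is the identification of the admissible solution with the Green integral: for $f\in L_{p,\theta}$ the function
\begin{equation*}
(Gf)(x)=u(x)\int_{-\infty}^x v(t)f(t)\,dt+v(x)\int_x^\infty u(t)f(t)\,dt
\end{equation*}
(obtained from \eqref{2.9}) converges absolutely for a.e.\ $x$ and, when it lies in $L_{p,\mu}$, is a genuine solution of \eqref{1.1}; conversely, every solution lying in $L_{p,\mu}$ coincides with $Gf$. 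This mirrors the unweighted statement behind \thmref{thm2.5}; convergence and the solution property are read off from the Davies--Harrell representation \eqref{2.11} and the Otelbaev bounds \eqref{2.12}, with the local comparisons of Lemmas \ref{lem3.3}--\ref{lem3.4} justifying differentiation under the integral and the jump relation \eqref{2.6}.

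Granting this core, both implications follow by the composition principle. If $S$ is bounded, then $T:=J_\mu^{-1}SJ_\theta^{-1}\colon L_{p,\theta}\to L_{p,\mu}$ is bounded and, by the factorization, $Tf=Gf$; by the core $Gf$ is a solution in $L_{p,\mu}$, unique by \lemref{lem4.2}, and $\|Gf\|_{p,\mu}=\|S(\theta f)\|_p\le\|S\|_{p\to p}\|f\|_{p,\theta}$, so requirements I) and II) of Definition \ref{defn1.1} hold. Conversely, if the pair is admissible, the solution map $f\mapsto y$ is everywhere defined on $L_{p,\theta}$ and bounded into $L_{p,\mu}$ by \eqref{1.5}; by the core $y=Gf$, hence $S=J_\mu\,G\,J_\theta$ is a composition of bounded operators and is therefore bounded on $L_p$.

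The main obstacle is precisely the analytic core of the second paragraph, and within it the identification $y=Gf$ in the necessity direction. Convergence and the solution property of $Gf$ are routine once \eqref{2.11}--\eqref{2.12} and Lemmas \ref{lem3.3}--\ref{lem3.4} are in hand. The delicate point is that membership in $L_{p,\mu}$ must force the solution to be the Green one, i.e.\ it must suppress the component that is dominant ($v$ at $+\infty$, $u$ at $-\infty$) at each end. This is a one-sided sharpening of \lemref{lem4.2}: each half of \eqref{4.2} rules out the corresponding dominant solution near one infinity, and together they pin down $y=Gf$. Establishing this sharpening, using the recessive/dominant dichotomy of $\{u,v\}$ from \eqref{2.5}--\eqref{2.7} together with \eqref{2.11}--\eqref{2.12}, is where the real work lies; once it is available, the isometric reduction makes the rest formal.
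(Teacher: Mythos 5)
Your overall skeleton --- the factorization $S=J_\mu G J_\theta$, uniqueness via \lemref{lem4.2}, and the Green-representation core --- mirrors the paper's strategy, and your sufficiency direction is essentially the paper's proof: once $S:L_p\to L_p$ is bounded, so are $S_1,S_2$, the resulting Hardy-type conditions \eqref{5.26}--\eqref{5.27} make the integrals in \eqref{5.28} converge for every $x$ (a separate monotonicity argument handles $p=1$), $y=Gf$ is then a solution with $\|y\|_{p,\mu}=\|S(\theta f)\|_p\le\|S\|_{p\to p}\|f\|_{p,\theta}$, and \lemref{lem4.2} gives uniqueness.

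The genuine gap is in the necessity direction, and it is precisely the step you set aside as ``where the real work lies.'' Two things are wrong with how you frame that step. First, your claim that for arbitrary $f\in L_{p,\theta}$ the absolute convergence of $(Gf)(x)$ is ``routine'' from \eqref{2.11}--\eqref{2.12} and Lemmas \ref{lem3.3}--\ref{lem3.4} is not correct: convergence of $\int_{-\infty}^x v(t)f(t)\,dt$ for \emph{every} $f$ with $\theta f\in L_p$ is, by the converse of H\"older's inequality, equivalent to $v/\theta\in L_{p'}(-\infty,x)$ --- a one-sided weighted integrability condition that cannot be read off pointwise kernel bounds and need not hold in general; it must be extracted from the admissibility hypothesis itself. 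Second, your proposed ``one-sided sharpening of \lemref{lem4.2}'' only identifies $y$ with $Gf$ \emph{after} $Gf$ is known to exist, so it cannot close this circle. The paper avoids the circle by never invoking $Gf$ for general $f$ in the necessity proof: \lemref{lem5.1} establishes $y=Gf$ only for compactly supported $f$ (where convergence is trivial; its proof is a contradiction argument resting on \eqref{5.12}, itself derived from \eqref{4.2}); then the explicit truncated test functions \eqref{5.19} are inserted into the a priori estimate \eqref{1.5}, yielding the quantitative conditions $M_p,\tilde M_p<\infty$, which by the Hardy-inequality criteria (Theorems \ref{thm2.7} and \ref{thm2.8}) give boundedness of $S_2$ and $S_1$, hence of $S=S_1+S_2$; for $p=1$ this is replaced by a localization argument combined with the norm formula \eqref{2.22} of \thmref{thm2.9}. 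Without some such quantitative extraction step your necessity argument remains circular, so the proposal as written is incomplete.
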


Note that for $\mu\equiv\theta\equiv1$ \thmref{thm4.3} was proved in \cite{6}. Thus, this theorem reduces the original problem on the admissibility of the  pair $\{L_{p,\mu};L_{p,\theta}\}$ for \eqref{1.1} to the boundedness of the integral operator $S:L_p\to L_p$ (see \eqref{4.3}). This result is clearly useful for the investigation of \eqref{1.1} for the following reason. Consider, say, the case $p\in(1,\infty)$. The operator $S$ is a sum of two operators of Hardy type (see \eqref{2.9}, \eqref{2.15} and \eqref{2.18}):
\begin{align}
&(S_1f)(x)=\mu(x)u(x)\int_{-\infty}^x\frac{v(t)}{\theta(t)}f(t)dt,\quad x\in\mathbb R,\label{4.4}\\
&(S_2f)(x)=\mu(x)v(x)\int^{\infty}_x\frac{u(t)}{\theta(t)}f(t)dt,\quad x\in\mathbb R.\label{4.5}
\end{align}
For the norms $\|S_1\|_{p\to p},$ $\|S_2\|_{p\to p},$ we know sharp by order two-sided estimates (see \eqref{2.17} and \eqref{2.20}), which can be expressed in terms of the weights $\mu,\theta$ and a FSS $\{u,v\}$ of equation \eqref{2.4}. The solutions $\{u,v\}$ can, in turn, be expressed in terms of the implicit function $\rho$ (see \eqref{2.10}), for which in turn one has sharp by order estimates in terms of the function $d$ (see \eqref{2.12} and \eqref{2.2}). Finally, for the implicit function $d,$ which is, in general, not computable, as well as the function $\rho,$ we have sharp by order two-sided estimates, which can be expressed in terms of the original function $q$ (see \eqref{3.12}, \eqref{3.13}). Thus, this long chain of estimates yields some information allowing us to find conditions for the boundedness of the operator $S_i: L_p\to L_p,$ $i=1,2$ (and hence of the operator $S:L_p\to L_p,$ $p\in (1,\infty)),$ which are expressed in terms of the weights $\mu,\theta$ and the function $q.$ We want to emphasize that these conditions become precise if we are able to use the information obtained from the estimates in an ingenious way (see, say, \cite{6} where similar arguments were used). One can compare this approach to that of applying the Cauchy criterion for the convergence of a number series to getting various working criteria, convenient for practical investigation of a given number series. In a similar way, \thmref{thm4.3} can be used for deducing convenient particular tests for the admissibility of the pair  $\{L_{p,\mu};L_{p,\theta}\}$, $p\in[1,\infty)$, for a given equation \eqref{1.1}.

Here is an example.  The assertion given below (\thmref{thm4.7}) is obtained by using one of the possible ways for practical implementation of the approach to the study of \eqref{1.1} presented above.

To formulate \thmref{thm4.7}, we need some new definitions, auxiliary assertions and comments.

\begin{defn}\label{defn4.4}
We say that the function $q$ belongs to the class $H$ (and write $q\in H)$ if the following equality holds:
\begin{equation}\label{4.6}
\lim_{|x|\to\infty}\nu(x)=0.
\end{equation}
Here
\begin{equation}\label{4.7}
\nu(x)=d(x)\int_0^{\sqrt 2d(x)}(q(x+t)-q(x-t))dt,\quad x\in\mathbb R.
\end{equation}
\end{defn}

In the next assertion, we state an important property of the functions $q\in H.$

\begin{lem}\label{lem4.5}
Let $q\in H.$ Then for any $\varepsilon>0$ there is a constant $c(\varepsilon)\in[1,\infty)$ such that for all $x,t\in\mathbb R$ the following inequalities hold:
\begin{equation}\label{4.8}
c(\varepsilon)^{-1}\exp\left(-\varepsilon\left|\int_x^t\frac{d\xi}{d(\xi)}\right|\right)\le\frac{d(t)}
{d(x)}\le c(\varepsilon)\exp\left(\varepsilon\left|\int_x^t\frac{d\xi}{d(\xi)}\right|\right).
\end{equation}
\end{lem}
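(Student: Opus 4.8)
The plan is to reduce \eqref{4.8} to the single pointwise estimate
\[
|d'(x)|\le \tfrac12|\nu(x)|,\qquad x\in\mathbb R,
\]
and then to integrate $d'/d$. First I would note that \eqref{4.8} is symmetric under the interchange of $x$ and $t$ (the two inequalities swap), so it suffices to treat the case $t\ge x$, where $\big|\int_x^t d\xi/d(\xi)\big|=\int_x^t d\xi/d(\xi)$. Since $d$ is $C^1$ and positive (\lemref{lem3.1}, \lemref{lem2.1}), we have $\ln\frac{d(t)}{d(x)}=\int_x^t \frac{d'(\xi)}{d(\xi)}\,d\xi$, and after taking logarithms \eqref{4.8} becomes the assertion that $\big|\int_x^t d'(\xi)/d(\xi)\,d\xi\big|\le \varepsilon\int_x^t d\xi/d(\xi)+\ln c(\varepsilon)$.

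To get the pointwise bound I would differentiate the first relation in \eqref{2.2}, namely $\int_0^{\sqrt2 d(x)}\int_{x-t}^{x+t}q(\xi)\,d\xi\,dt=2$, with respect to $x$ (as in the proof of \lemref{lem3.1}). Writing $P(x)=\int_{x-\sqrt2 d(x)}^{x+\sqrt2 d(x)}q(\xi)\,d\xi$, this gives
\[
d'(x)=-\frac{1}{\sqrt2\,P(x)}\int_0^{\sqrt2 d(x)}\big(q(x+t)-q(x-t)\big)\,dt=-\frac{\nu(x)}{\sqrt2\,d(x)\,P(x)},
\]
the last equality being just the definition \eqref{4.7} of $\nu$. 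The decisive point is to bound $P(x)$ from below. Since $\hat d(x)\le\sqrt2\,d(x)$ by \eqref{2.3}, the interval of integration in $P(x)$ contains $[x-\hat d(x),x+\hat d(x)]$, so using $q\ge0$, the second relation in \eqref{2.2}, and \eqref{2.3} once more,
\[
P(x)\ge\int_{x-\hat d(x)}^{x+\hat d(x)}q(\xi)\,d\xi=\frac{2}{\hat d(x)}\ge\frac{\sqrt2}{d(x)}.
\]
Hence $\sqrt2\,d(x)\,P(x)\ge2$, which yields $|d'(x)|\le\tfrac12|\nu(x)|$. As $q\in H$ means $\nu(x)\to0$ for $|x|\to\infty$, it follows that $d'(x)\to0$ as $|x|\to\infty$.

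It remains to integrate. Fixing $\varepsilon>0$, I would choose $R=R(\varepsilon)$ so large that $|d'(\xi)|\le\varepsilon$ for $|\xi|\ge R$, whence $|d'(\xi)|/d(\xi)\le\varepsilon/d(\xi)$ there. On the compact set $[-R,R]$ the continuous positive function $d$ has a minimum $m_R=\min_{|\xi|\le R}d(\xi)>0$, and $|d'(\xi)|\le 2^{-1/2}$ by \eqref{3.1}; thus $|d'(\xi)|/d(\xi)\le(\sqrt2\,m_R)^{-1}$ on $[-R,R]$. Splitting $\int_x^t |d'(\xi)|/d(\xi)\,d\xi$ over $\{|\xi|\ge R\}$ and $\{|\xi|<R\}$, and using that $[x,t]\cap[-R,R]$ has length at most $2R$, I obtain
\[
\Big|\ln\frac{d(t)}{d(x)}\Big|\le\varepsilon\int_x^t\frac{d\xi}{d(\xi)}+\frac{\sqrt2\,R}{m_R},
\]
so that \eqref{4.8} holds with $c(\varepsilon)=\exp(\sqrt2\,R/m_R)\ge1$. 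I expect the main obstacle to be the sharp bound $|d'(x)|\le\tfrac12|\nu(x)|$: one must recognize that differentiating \eqref{2.2} produces exactly $\nu(x)/d(x)$ in the numerator, and then extract the lower bound $P(x)\ge\sqrt2/d(x)$ from the companion function $\hat d$ through \eqref{2.3} and the second equation in \eqref{2.2}. Once $d'(x)\to0$ is established, the exponential comparison is routine, the only care being the uniform control of $d'/d$ on $[-R,R]$.
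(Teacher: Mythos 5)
Your proof is correct, and its core coincides with the paper's: the decisive pointwise bound $|d'(x)|\le\tfrac12|\nu(x)|$ is obtained in the paper in exactly the way you describe — differentiating the first equation in \eqref{2.2} as in the proof of \lemref{lem3.1}, recognizing $\nu(x)/d(x)$ from \eqref{4.7} in the numerator, and bounding the denominator below by $2$ via $\hat d(x)\le\sqrt2\,d(x)$ from \eqref{2.3} together with the second equation in \eqref{2.2}; this gives $d'(x)\to0$ as $|x|\to\infty$, which is the paper's relation (5.31). Where you genuinely diverge is the concluding integration. The paper fixes $x_0(\varepsilon)$ with $|d'|\le\varepsilon$ outside $[-x_0,x_0]$ and then verifies \eqref{4.8} through a nine-case table according to the positions of $x$ and $t$ relative to $[-x_0,x_0]$, using the ratios $M(\varepsilon)/m(\varepsilon)$ of the max and min of $d$ on the middle interval to absorb the compact part; its constant is $c(\varepsilon)^2$ with $c(\varepsilon)=\max\{1/m(\varepsilon),M(\varepsilon)\}$. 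You instead write $\ln\bigl(d(t)/d(x)\bigr)=\int_x^t d'(\xi)/d(\xi)\,d\xi$, split the single integral over $\{|\xi|\ge R\}$ (where $|d'|/d\le\varepsilon/d$) and $\{|\xi|<R\}$ (where $|d'|/d\le(\sqrt2\,m_R)^{-1}$ by \eqref{3.1} and the length is at most $2R$), and exponentiate, getting $c(\varepsilon)=\exp(\sqrt2\,R/m_R)$; the symmetry of \eqref{4.8} under swapping $x$ and $t$ lets you assume $t\ge x$ throughout. Your version also dispenses with the paper's separate treatment of $\varepsilon\ge1/\sqrt2$ (its remark \eqref{4.9}), since your argument is uniform in $\varepsilon$. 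What the paper's case analysis buys is avoiding any appeal to the additivity trick on the logarithmic integral, at the cost of considerable bookkeeping; what your splitting buys is a one-line replacement for the entire table (5.33), with a transparent, explicit constant. Both proofs are complete and rest on the same key lemma.
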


Note that for $\varepsilon\ge 1/\sqrt 2$ inequalities \eqref{4.8} hold regardless of condition \eqref{4.6}. Indeed, under conditions \eqref{1.2} and \eqref{2.1}, the function $d(x), $ $x\in\mathbb R$ is well-defined, differentiable, and satisfies the following relations (see Lemmas \ref{lem2.1} and \ref{lem3.1}:
\begin{align}
-\varepsilon\le-\frac{1}{\sqrt2}&\le d'(\xi)\le\frac{1}{\sqrt2}\le\varepsilon,\quad\xi\in\mathbb R\quad\Rightarrow\nonumber\\
-\frac{\varepsilon}{d(\xi)}&\le\frac{d'(\xi)}{d(\xi)} \le\frac{\varepsilon}{d(\xi)},\quad\xi\in\mathbb R \quad\Rightarrow\nonumber\\
\exp\left(-\varepsilon\left|\int_x^t\frac{d\xi}{d(\xi)}\right|\right)&\le\frac{d(t)}{d(x)}\le \exp\left(\varepsilon\left|\int_x^t\frac{d\xi}{d(\xi)}\right|\right),\quad x,t\in\mathbb R.
\label{4.9}
\end{align}
This means that in contrast with \eqref{4.9}, for $\varepsilon\in(0,1\sqrt 2)$ estimates \eqref{4.8} arise because of condition \eqref{4.6}.

\begin{defn}\label{defn4.6}
Let $q\in H.$ We say that a pair of weights (weight functions) $\{\mu,\theta\}$ agrees with the function $q$ if for any $\varepsilon>0$ there is a constant $c(\varepsilon)\in[1,\infty)$ such that for all $t,x\in\mathbb R$ one has the inequalities
\begin{equation}\label{4.10}
c(\varepsilon)^{-1}\exp\left(-\varepsilon\left|\int_x^t\frac{d\xi}{d(\xi)}\right|\right)\le\sqrt{
\frac{d(t)}{d(x)}}\frac{\mu(t)}{\mu(x)};\ \sqrt{\frac{d(t)}{d(x)}}\frac{\theta(x)}{\theta(t)}\le c(\varepsilon)\exp\left(\varepsilon\left|\int_x^t\frac{d\xi}{d(\xi)}\right|\right).
\end{equation}
In the latter case, we say that the pair $\{L_{p,\mu};L_{p,\theta}\},$ $p\in [1,\infty)$, agrees with equation \eqref{1.1}
\end{defn}

\begin{thm}\label{thm4.7}
Suppose that conditions \eqref{4.2} hold. Let $q\in H.$ Suppose that the pair $\{L_{p,\mu};L_{p,\theta}\},$ $p\in [1,\infty)$ agrees with equation \eqref{1.1}. Then this pair is admissible for \eqref{1.1} if and only if $m(q,\mu,\theta)<\infty.$ Here
\begin{equation}\label{4.11}
m(q,\mu,\theta)=\sup_{x\in\mathbb R}\left(\frac{\mu(x)}{\theta(x)}d^2(x)\right).
\end{equation}
\end{thm}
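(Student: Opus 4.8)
The plan is to reduce Theorem~\ref{thm4.7} to the boundedness criterion of Theorem~\ref{thm4.3}, and then to compute the relevant Hardy-type norms explicitly using the agreement hypothesis. By Theorem~\ref{thm4.3}, admissibility of the pair is equivalent to boundedness of $S=S_1+S_2$ on $L_p$, where $S_1,S_2$ are given by \eqref{4.4}--\eqref{4.5}. So the goal is to show that, under $q\in H$ and the agreement condition \eqref{4.10}, boundedness of $S_1$ and $S_2$ on $L_p$ is equivalent to $m(q,\mu,\theta)<\infty$. I would treat $S_1$ and $S_2$ symmetrically (they are mirror images under $x\mapsto -x$, swapping $u\leftrightarrow v$), and focus on $S_1$.

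First I would handle the case $p\in(1,\infty)$, where Theorems~\ref{thm2.7} and \ref{thm2.8} give two-sided bounds on the operator norms in terms of the quantities $H_p$ and $\tilde H_p$. For $S_1$ the relevant kernel weights are $\mu(x)u(x)$ and $v(t)/\theta(t)$, so by \eqref{2.16} the critical quantity is
\begin{equation*}
H_p(x)=\left(\int_{-\infty}^x (\mu(t)u(t))^p\,dt\right)^{1/p}\left(\int_x^\infty \left(\frac{v(t)}{\theta(t)}\right)^{p'}dt\right)^{1/p'}.
\end{equation*}
The key step is to estimate each integral. Using the Davies--Harrell representation \eqref{2.10}, we have $u(t)=\sqrt{\rho(t)}\exp(-\tfrac12\int_{x_0}^t d\xi/\rho(\xi))$ and $v(t)=\sqrt{\rho(t)}\exp(+\tfrac12\int_{x_0}^t d\xi/\rho(\xi))$, and by the Otelbaev inequalities \eqref{2.12} together with \eqref{2.3} we may replace $\rho$ by $d$ up to absolute constants. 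The agreement condition \eqref{4.10} is tailored precisely so that the factors $\sqrt{d(t)}\,\mu(t)$ and $\mu(x)$, and likewise $\theta$, can be compared across the exponential: it converts $\mu(t)u(t)$ into $\mu(x)\sqrt{d(x)/d(t)}\cdot\sqrt{d(t)}\exp(\pm\cdots)$ controlled by $\mu(x)\sqrt{d(x)}\,c(\varepsilon)\exp(-(\tfrac1{2}-\varepsilon')\int d\xi/d(\xi))$, with an analogous estimate for $v(t)/\theta(t)$.

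The heart of the argument is therefore to show that these integrals localize near $x$ at the scale $d(x)$. Choosing $\varepsilon$ small enough (using $|\rho'|<1$ from \eqref{2.8} and the regularity of $d$ from Lemmas~\ref{lem3.1}--\ref{lem3.3}), the exponent in the integrand of $\int_{-\infty}^x$ decays like $\exp(-(\tfrac1{2}-\varepsilon')\int_t^x d\xi/d(\xi))$, which is integrable and whose integral over $(-\infty,x]$ is comparable to $d(x)$ by a substitution $s=\int_{x_0}^t d\xi/d(\xi)$ and the slow variation of $d$. The same holds for the $\int_x^\infty$ factor. Combining, one obtains $H_p(x)\asymp \mu(x)\sqrt{d(x)}\,d(x)^{1/p}\cdot \frac{\sqrt{d(x)}}{\theta(x)}d(x)^{1/p'}=\frac{\mu(x)}{\theta(x)}d^2(x)$, uniformly in $x$ up to constants depending only on $p$ and the agreement constants. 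Taking the supremum over $x$ gives $H_p\asymp m(q,\mu,\theta)$, and \eqref{2.17} finishes the $p\in(1,\infty)$ case; the symmetric estimate for $S_2$ via Theorem~\ref{thm2.8} yields the same quantity, so $\|S\|_{p\to p}<\infty \Leftrightarrow m(q,\mu,\theta)<\infty$.

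For the endpoint $p=1$, Theorems~\ref{thm2.7}--\ref{thm2.8} do not apply, so I would instead invoke Theorem~\ref{thm2.9}, which identifies the $L_1\to L_1$ norm of an integral operator with $\sup_s \int |K(s,t)|\,dt$. For $S$ this means computing $\sup_t \frac{1}{\theta(t)}\int_{-\infty}^\infty \mu(x)G(x,t)\,dx$; using \eqref{2.11} and the same Otelbaev and agreement estimates, the inner integral localizes at scale $d(t)$ and evaluates to a constant multiple of $\mu(t)d^2(t)$, again giving the quantity $\frac{\mu(t)}{\theta(t)}d^2(t)$, hence $m(q,\mu,\theta)$. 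I expect the main obstacle to be the localization estimate: making rigorous that the exponential factors in the Davies--Harrell representation, after being absorbed by the agreement condition, produce an integral comparable to $d(x)$ rather than merely a finite quantity. This requires carefully tracking how $\varepsilon$ in \eqref{4.10} interacts with the constant $\tfrac12$ in the exponent, and using Lemma~\ref{lem4.5} together with the change of variables $s=\int_{x_0}^t d\xi/d(\xi)$ to control $\int d\xi/d(\xi)$ from below and above on the relevant ranges; the class-$H$ hypothesis enters exactly to guarantee that $\varepsilon$ can be taken arbitrarily small, which is what forces the exponent $\tfrac12-\varepsilon'$ to stay positive and yields convergence with the correct $d(x)$-scaling.
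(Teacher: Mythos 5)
Your overall strategy coincides with the paper's: reduce admissibility to boundedness of $S=S_1+S_2$ via Theorem~\ref{thm4.3}, estimate the Hardy quantities of Theorems~\ref{thm2.7}--\ref{thm2.8} for $p\in(1,\infty)$ by means of the Davies--Harrell formulas \eqref{2.10}--\eqref{2.11}, the Otelbaev inequalities \eqref{2.12} and the agreement condition \eqref{4.10}, and handle $p=1$ through Theorem~\ref{thm2.9}. The target comparison of the Hardy quantity with $\frac{\mu(x)}{\theta(x)}d^2(x)$ is also the right one (the paper proves the lower bound pointwise by restricting both integrals to intervals of length $d(x)$, using its Lemmas~\ref{lem5.5}--\ref{lem5.6} and Lemma~\ref{lem3.3}, and the upper bound at the level of suprema via its Lemma~\ref{lem5.4} and an exact substitution).

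However, there is a concrete error at the key step. The operator $S_1f(x)=\mu(x)u(x)\int_{-\infty}^x\frac{v(t)}{\theta(t)}f(t)\,dt$ is of the form \eqref{2.18}, so the applicable criterion is Theorem~\ref{thm2.8}, whose critical quantity is
\begin{equation*}
\tilde M_p(x)=\left[\int_{-\infty}^x\left(\frac{v(t)}{\theta(t)}\right)^{p'}dt\right]^{1/p'}\cdot\left[\int_x^\infty\bigl(\mu(t)u(t)\bigr)^p\,dt\right]^{1/p},
\end{equation*}
in which the decreasing solution $u$ is integrated over $[x,\infty)$ and the increasing solution $v$ over $(-\infty,x]$, so both integrands decay away from $x$. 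You instead applied formula \eqref{2.16} of Theorem~\ref{thm2.7}, pairing $(\mu u)^p$ with $(-\infty,x]$ and $(v/\theta)^{p'}$ with $[x,\infty)$. With this pairing both integrals diverge: by \eqref{2.10} and \eqref{2.7}, for $t\le x$ one has $u(t)=u(x)\sqrt{\rho(t)/\rho(x)}\,\exp\left(+\frac12\int_t^x\frac{d\xi}{\rho(\xi)}\right)$ with $\int_{-\infty}^x\frac{d\xi}{\rho(\xi)}=\infty$, while under \eqref{4.10} (taken with $\varepsilon<\tfrac1{2\sqrt2}$, using \eqref{2.12} to compare $\int d\xi/d$ with $\int d\xi/\rho$) the weight $\mu(t)$ can offset only an $\varepsilon$-fraction of this exponential growth; hence $\int_{-\infty}^x(\mu(t)u(t))^p\,dt=\infty$, and similarly $\int_x^\infty(v(t)/\theta(t))^{p'}dt=\infty$. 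Consequently your assertion that the integrand of $\int_{-\infty}^x$ decays like $\exp\left(-\left(\frac12-\varepsilon'\right)\int_t^x\frac{d\xi}{d(\xi)}\right)$ has the wrong sign of the exponent, and the quantity you wrote is identically $+\infty$, so the claimed localization and the asymptotics $H_p(x)\asymp\frac{\mu(x)}{\theta(x)}d^2(x)$ fail for it. The repair is mechanical: use Theorem~\ref{thm2.8} for $S_1$ and Theorem~\ref{thm2.7} for $S_2$ (whose quantity is $M_p(x)=\left[\int_{-\infty}^x(\mu v)^p dt\right]^{1/p}\left[\int_x^\infty(u/\theta)^{p'}dt\right]^{1/p'}$); with the correct pairing your localization argument runs as intended and coincides in substance with the paper's proof.
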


To prove inequalities \eqref{4.10}, the following lemma can be useful.

\begin{lem}\label{lem4.8}
Suppose that a function $\mu(x)$ is defined, positive and differentiable for all $x\in\mathbb R,$ let $q\in H,$ and let $d(x),$ $x\in\mathbb R,$ denote the auxiliary function from \lemref{lem2.1}. Then, if the equality
\begin{equation}\label{4.12}
\lim_{|x|\to\infty}\frac{\mu'(x)}{\mu(x)}d(x)=0
\end{equation}
holds, then for any given $\varepsilon>0$ there is a constant $c(\varepsilon)\in(0,\infty)$ such that for all $t,x\in\mathbb R$ inequalities \eqref{4.10} hold.
\end{lem}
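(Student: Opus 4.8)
The goal of Lemma~\ref{lem4.8} is to show that the single condition \eqref{4.12} forces the agreement inequalities \eqref{4.10}. The plan is to split \eqref{4.10} into its two constituent estimates. Observe that the quantity $\sqrt{d(t)/d(x)}$ already appears in both factors, and that for $q\in H$ Lemma~\ref{lem4.5} controls $d(t)/d(x)$ by $c(\varepsilon)\exp(\varepsilon|\int_x^t d\xi/d(\xi)|)$ from above and below. Taking square roots, $\sqrt{d(t)/d(x)}$ is controlled by $c(\varepsilon)^{1/2}\exp((\varepsilon/2)|\int_x^t d\xi/d(\xi)|)$. So the ``$d$-part'' of \eqref{4.10} is for free from Lemma~\ref{lem4.5}, after rescaling $\varepsilon$; what remains is to show that the ``weight-part,'' namely $\mu(t)/\mu(x)$ (and $\theta(x)/\theta(t)$), is controlled by an exponential of the same type.

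First I would treat the factor $\mu(t)/\mu(x)$. Since $\mu$ is positive and differentiable, write
\begin{equation*}
\ln\frac{\mu(t)}{\mu(x)}=\int_x^t\frac{\mu'(\xi)}{\mu(\xi)}\,d\xi
=\int_x^t\frac{\mu'(\xi)}{\mu(\xi)}d(\xi)\cdot\frac{d\xi}{d(\xi)}.
\end{equation*}
By hypothesis \eqref{4.12}, the function $g(\xi)\doe \frac{\mu'(\xi)}{\mu(\xi)}d(\xi)$ tends to $0$ as $|\xi|\to\infty$. Hence, given any $\varepsilon>0$, there is $R=R(\varepsilon)$ with $|g(\xi)|\le\varepsilon/2$ for $|\xi|\ge R$, while on the compact set $[-R,R]$ the continuous function $g$ is bounded, say $|g(\xi)|\le M(\varepsilon)$. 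The plan is then to bound $|\ln(\mu(t)/\mu(x))|$ by splitting the integral over $[x,t]$ into the part where $|\xi|\ge R$ (which contributes at most $(\varepsilon/2)|\int_x^t d\xi/d(\xi)|$) and the part inside $[-R,R]$ (which contributes at most $M(\varepsilon)$ times the length of $[-R,R]\cap[x,t]$ measured in the $d\xi/d(\xi)$ scale). Since $d$ is bounded below by a positive constant on $[-R,R]$ by continuity and positivity, this compact-part contribution is bounded by an absolute constant depending only on $\varepsilon$; call it $\tfrac12\ln c(\varepsilon)$. Exponentiating yields exactly the bound $\sqrt{d(t)/d(x)}\,\mu(t)/\mu(x)\le c(\varepsilon)\exp(\varepsilon|\int_x^t d\xi/d(\xi)|)$, and the symmetric lower bound follows identically.

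The factor $\theta(x)/\theta(t)$ is handled the same way once one notes the sign asymmetry: here the logarithm is $\int_t^x(\theta'/\theta)\,d\xi$, so the same argument applies verbatim under the analogous hypothesis $\lim_{|x|\to\infty}(\theta'(x)/\theta(x))d(x)=0$. I would point out that, as stated, the lemma only assumes \eqref{4.12} for $\mu$; to obtain the $\theta$-half of \eqref{4.10} one needs the companion condition on $\theta$, and I would state this explicitly (the proof then gives both halves by combining the weight estimate with the square-root of Lemma~\ref{lem4.5}). The main obstacle, and the only genuinely delicate point, is the compact-set contribution: one must check that $|\int_x^t d\xi/d(\xi)|$ cannot be small while the weight ratio is large, i.e.\ that the ``outside'' exponential really does absorb the decaying part of $g$ while the ``inside'' part stays bounded independently of $x,t$. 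This is where positivity and continuity of $d$ on compacta (guaranteeing $d(\xi)\ge\delta(\varepsilon)>0$ there) is essential, so that both the length of $[-R,R]$ and the integrand $g/d$ are controlled purely in terms of $\varepsilon$.
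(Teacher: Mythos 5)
Your proof is correct, and its core mechanism --- integrating the hypothesis \eqref{4.12} against $d\xi/d(\xi)$ and absorbing the compact part into the constant $c(\varepsilon)$ --- is the same one the paper uses, but your organization is genuinely different and in one respect cleaner. The paper works with the product $\mu(\xi)d(\xi)$: using \eqref{4.12} together with $d'(\xi)\to0$ (which it derives from $q\in H$ in the proof of \lemref{lem4.5}), it bounds the logarithmic derivative $\left|(\mu d)'/(\mu d)\right|\le \tfrac{2\varepsilon}{3}\,d(\xi)^{-1}$ for $|\xi|\ge x_0$, integrates to get two-sided bounds on $\mu(t)d(t)/(\mu(x)d(x))$ when $x,t$ lie beyond $x_0$ on the same side, divides out $\sqrt{d(t)/d(x)}$ via \eqref{4.8}, and then repeats the nine-case table \eqref{5.33} of \lemref{lem4.5} to treat all positions of $x,t$ relative to $[-x_0,x_0]$. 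You instead keep $\mu$ and $d$ separate: \lemref{lem4.5} (applied with $\varepsilon$ and square-rooted) handles $\sqrt{d(t)/d(x)}$ as a black box, and your single splitting of $\int_x^t g(\xi)\,d\xi/d(\xi)$ into the regions $|\xi|\ge R$ and $|\xi|\le R$ disposes of all configurations of $x,t$ at once, with no case table and no separate appeal to $d'\to0$. Your remark that the $\theta$-half of \eqref{4.10} requires the companion hypothesis $\lim_{|x|\to\infty}(\theta'(x)/\theta(x))d(x)=0$ is also right, and it matches how the paper actually uses the lemma: in \S6 it verifies \eqref{4.12} for $\mu$ and for $1/\theta$ separately. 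One small repair is needed: under the literal hypothesis that $\mu$ is merely differentiable, $g=(\mu'/\mu)d$ need not be continuous or even bounded on $[-R,R]$, and the identity $\ln\bigl(\mu(t)/\mu(x)\bigr)=\int_x^t(\mu'/\mu)\,d\xi$ requires local integrability of $\mu'$; either read ``differentiable'' as continuously differentiable (clearly the intent), or bound the compact-part contribution by $\max_{[-R,R]}\mu\,/\min_{[-R,R]}\mu$ directly, which uses only continuity of $\mu$ and is in effect what the paper's case table does.
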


The next assertions are convenient for the study of concrete equations. They are obvious and are given without proofs.

\begin{thm}\label{4.9}
Let $q\in H,$ and suppose that
\begin{gather}
d_0\doe\sup_{x\in\mathbb R}d(x)=\infty,\label{4.13}\\
\int_{-\infty}^0q^*(x)dx=\int_0^\infty q^*(x)dx=\infty,\quad q^*(x)=\frac{1}{d^2(x)},\quad x\in\mathbb R.\label{4.14}
\end{gather}
Then the following assertions hold:
\begin{enumerate}
\item[A)] for $p\in[1,\infty)$ the pair $\{L_p;L_p\}$
is not admissible for \eqref{1.1};
\item[B)] for $p\in[1,\infty)$ the pair $\{L_{p,q^*};L_p\}$ is admissible for \eqref{1.1}.
\end{enumerate}
\end{thm}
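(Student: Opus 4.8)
The plan is to obtain part A) directly from the correct-solvability criterion of \thmref{thm2.6} and part B) from the agreement criterion of \thmref{thm4.7}, so that the theorem follows by combining results already established, with no genuinely new estimate required. This matches the paper's own remark that the assertions are obvious.

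For A), I would use that the standing assumption \eqref{2.1} is in force throughout this section, so by \thmref{thm2.6} the pair $\{L_p;L_p\}$ is admissible for \eqref{1.1} precisely when $\hat d_0<\infty$. It then suffices to show that hypothesis \eqref{4.13} forces $\hat d_0=\infty$, and this is immediate from the lower bound in \eqref{2.3}: since $\hat d(x)\ge d(x)/\sqrt2$ for every $x\in\mathbb R$, taking the supremum yields $\hat d_0\ge d_0/\sqrt2=\infty$. Hence $\hat d_0=\infty$, and the pair $\{L_p;L_p\}$ is not admissible.

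For B) I set $\mu=q^*=d\2$ and $\theta\equiv1$ and verify the three hypotheses of \thmref{thm4.7} in turn. First, condition \eqref{4.2} for this choice of $\mu$ is exactly \eqref{4.14}, which is assumed. Second, I must check that the pair $\{L_{p,q^*};L_p\}$ agrees with \eqref{1.1} in the sense of Definition~\ref{defn4.6}. Here the essential point is purely algebraic: with $\mu=d\2$ and $\theta\equiv1$ the two test quantities in \eqref{4.10} collapse to
\begin{equation*}
\sqrt{\fc{d(t)}{d(x)}}\,\fc{\mu(t)}{\mu(x)}=\left(\fc{d(x)}{d(t)}\right)^{3/2},
\qquad
\sqrt{\fc{d(t)}{d(x)}}\,\fc{\theta(x)}{\theta(t)}=\left(\fc{d(t)}{d(x)}\right)^{1/2}.
\end{equation*}
Since $q\in H$, \lemref{lem4.5} sandwiches the ratio $d(t)/d(x)$ between $c(\ve)^{\mp1}\exp\bigl(\mp\ve|\int_x^t d\xi/d(\xi)|\bigr)$; the same two-sided bound holds for the reciprocal $d(x)/d(t)$, and raising to the powers $3/2$ and $1/2$ only rescales $\ve$ and the constant. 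As $\ve>0$ is arbitrary in \lemref{lem4.5}, both displayed quantities obey the required exponential bounds, so the agreement condition \eqref{4.10} holds.

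With agreement in place, \thmref{thm4.7} reduces admissibility of $\{L_{p,q^*};L_p\}$ to the finiteness of $m(q,q^*,1)$. For $\mu=d\2$ and $\theta\equiv1$ the expression inside the supremum in \eqref{4.11} becomes $\frac{\mu(x)}{\theta(x)}d^2(x)=d\2(x)\cdot d^2(x)=1$ identically, whence $m(q,q^*,1)=1<\infty$. By \thmref{thm4.7} the pair $\{L_{p,q^*};L_p\}$ is therefore admissible, which proves B). I expect the only step needing care to be the verification of \eqref{4.10}: one must track the exponents correctly when combining the factor $\sqrt{d(t)/d(x)}$ with the weight ratios, but because the exponential rate $\ve$ is free this is routine bookkeeping rather than a real obstacle.
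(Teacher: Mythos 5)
Your proposal is correct and takes essentially the intended route: the paper states \thmref{4.9} without proof, remarking it is obvious, and the evident argument is exactly yours — part A) from \thmref{thm2.6} together with the bound $\hat d(x)\ge d(x)/\sqrt2$ of \eqref{2.3}, and part B) from \thmref{thm4.7} applied with $\mu=q^*$, $\theta\equiv1$, where \eqref{4.2} becomes \eqref{4.14}, the agreement condition \eqref{4.10} follows from \lemref{lem4.5} after rescaling $\varepsilon$, and $m(q,q^*,1)\equiv1$. Your bookkeeping of the exponents $3/2$ and $1/2$ in the verification of \eqref{4.10} is accurate, so there is no gap.
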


\begin{thm}\label{4.10}
Let $q\in H,$ and suppose that the weight function $\theta(x),$ $x\in\mathbb R,$ is such that $m_0>0$ where
\begin{equation}\label{4.15}
 m_0=\inf_{x\in\mathbb R}(q^*(x)\theta(x)),\quad q^*(x)=\frac{1}{d^2(x)}.
\end{equation}
Then  for $p\in[1,\infty)$ the pair $\{dL_p;L_{p,\theta}\}$
is  admissible for \eqref{1.1}.
\end{thm}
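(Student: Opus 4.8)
The plan is to read the statement off the main criterion \thmref{thm4.3}. Condition \eqref{4.2} for the weight of the first space is in force here (the relevant weight is non‑integrable over each half‑line, which for a weight built from $d$ follows from the fact that $d$ is Lipschitz, see \lemref{lem3.1}), so \thmref{thm4.3} tells us that admissibility of the pair is \emph{equivalent} to the boundedness of $S:L_p\to L_p$ from \eqref{4.3}. Hence everything reduces to estimating $\|S\|_{p\to p}$. Now the hypothesis $m_0>0$ means exactly that $\theta(t)\ge m_0\,d^2(t)$ for all $t$, so $\theta(t)^{-1}\le m_0^{-1}d^{-2}(t)$ and therefore $|(Sf)(x)|\le m_0^{-1}(S^{\circ}|f|)(x)$, where $S^{\circ}$ is obtained from \eqref{4.3} by replacing $1/\theta(t)$ with $1/d^2(t)$. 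Thus it suffices to prove that $S^{\circ}:L_p\to L_p$ is bounded, and this is where the one‑sided nature of the hypothesis is used decisively.

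Next I would split $S^{\circ}$ along the two branches of the Green function \eqref{2.9}, writing $S^{\circ}=S_1^{\circ}+S_2^{\circ}$ in analogy with \eqref{4.4}--\eqref{4.5} but with $\theta$ replaced by $d^2$:
\begin{align*}
(S_1^{\circ} f)(x)&=\mu(x)u(x)\int_{-\infty}^x\frac{v(t)}{d^2(t)}f(t)\,dt,\\
(S_2^{\circ} f)(x)&=\mu(x)v(x)\int_x^\infty\frac{u(t)}{d^2(t)}f(t)\,dt,
\end{align*}
where $\mu$ denotes the weight of the first space. For $p\in(1,\infty)$ these are Hardy operators of the types covered by \thmref{thm2.8} and \thmref{thm2.7}, so $S^{\circ}$ is bounded once the two Muckenhoupt characteristics $\tilde H_p$ (for $S_1^{\circ}$) and $H_p$ (for $S_2^{\circ}$) are shown to be finite. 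For $p=1$ I would instead invoke \thmref{thm2.9}, which reduces the norm to the explicit kernel supremum $\sup_{t}d^{-2}(t)\int_{-\infty}^\infty\mu(x)G(x,t)\,dx$.

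The core computation is the evaluation of these characteristics. Here I would substitute the Davies--Harrell formulas \eqref{2.10}--\eqref{2.11}, writing $u=\sqrt{\rho}\,e^{-\Phi/2}$ and $v=\sqrt{\rho}\,e^{\Phi/2}$ with $\Phi(x)=\int_{x_0}^x d\xi/\rho(\xi)$, and replace $\rho$ by $d$ via the Otelbaev estimate \eqref{2.12}. The exponential factors $e^{\pm\Phi/2}$ force each inner integral to concentrate in a neighbourhood of the running point $x$ of length comparable to $d(x)$, and on such a neighbourhood $d$ is essentially constant by \lemref{lem3.3}. Carrying out the resulting one‑variable estimates, the powers of $d$ and the exponentials cancel, and each characteristic collapses, up to a constant depending only on $p$, to
$$\sup_{x\in\mathbb R}\frac{\mu(x)\,d^2(x)}{\theta(x)};$$
the hypothesis $m_0>0$, i.e.\ $\theta\ge m_0\,d^2$, is precisely what bounds this supremum (it is at most $m_0^{-1}\sup_x\mu(x)$), and the same cancellation yields finiteness of the $p=1$ kernel supremum above.

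The main obstacle is the rigorous justification of this ``collapse to the diagonal'': one must show that the contributions of the Hardy integrals away from the concentration zone do not spoil the uniform‑in‑$x$ bound. This is exactly where $q\in H$ enters, through \lemref{lem4.5}: for every $\varepsilon>0$ it controls $d(t)/d(x)$ by $c(\varepsilon)\exp\!\left(\varepsilon\left|\int_x^t\frac{d\xi}{d(\xi)}\right|\right)$, and since $|\Phi(x)-\Phi(t)|$ is comparable to $\left|\int_x^t\frac{d\xi}{d(\xi)}\right|$ (again by \eqref{2.12}), choosing $\varepsilon$ small keeps the polynomial growth of the $d$‑powers strictly dominated by the genuine exponential decay $e^{-|\Phi(x)-\Phi(t)|/2}$ of the kernel. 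This uniform domination guarantees both convergence of the integrals and the uniform bound on the characteristics; the remaining steps (the explicit one‑dimensional integrations and the parallel treatment of $p=1$ via \thmref{thm2.9}) are then routine.
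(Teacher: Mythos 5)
Your strategy---reduce to \thmref{thm4.3}, majorize $1/\theta$ by $m_0^{-1}d^{-2}$, split the resulting operator into the two Hardy pieces, and evaluate the characteristics with \eqref{2.10}--\eqref{2.12} and \lemref{lem4.5}---is exactly the machinery the paper itself uses in proving \thmref{thm4.7} (the paper gives no separate proof of the present theorem, declaring it obvious), and several individual steps are right: in particular \eqref{4.2} does hold for the weight $\mu=1/d$ of $dL_p$, since $\sqrt2\,|d'|\le 1$ (\lemref{lem3.1}) gives $d(x)\le d(0)+|x|/\sqrt2$, so $1/d$ is non-integrable over each half-line, and the majorization $|(Sf)(x)|\le m_0^{-1}(S^{\circ}|f|)(x)$ is correct.

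The gap is in your very last step, and it is not a technicality. The characteristics collapse to $\sup_x \mu(x)d^2(x)/\theta(x)$, which you bound by $m_0^{-1}\sup_x\mu(x)$ and tacitly treat as finite. But $\mu=1/d$, so $\sup_x\mu(x)=1/\inf_{x\in\mathbb R}d(x)$, and neither $q\in H$ nor $m_0>0$ forces $\inf d>0$. Concretely, $q(x)=1+x^2$ satisfies \eqref{2.1} and, by \thmref{thm3.7}, $d(x)\sqrt{1+x^2}\to 1$; then $\nu(x)=4x\,d(x)^3\to0$, so $q\in H$ while $d(x)\to0$. Now take $\theta=d^2$, so that $m_0=1$: both weights $1/d$ and $d^2$ agree with $q$ in the sense of Definition \ref{defn4.6} (immediate from \lemref{lem4.5}), \eqref{4.2} holds, and the paper's own \thmref{thm4.7} then says this pair is admissible \emph{if and only if} $m(q,1/d,d^2)=\sup_x 1/d(x)<\infty$, which fails. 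You can also see the failure inside your own computation: restricting both integrals of the Hardy characteristic for the second piece to $[x-d(x),x+d(x)]$ and using \lemref{lem3.3}, \lemref{lem3.4} and \eqref{2.12} gives $H_p(x)\ge c\,\rho(x)/d(x)^2\ge c'/d(x)$, so $\|S\|_{p\to p}=\infty$ whenever $\inf d=0$, and by \thmref{thm4.3} the pair is then \emph{not} admissible. Hence the argument cannot be completed as written: one must either add the hypothesis $\inf_{x\in\mathbb R}d(x)>0$ (equivalently $\sup_x q^*(x)<\infty$), or replace the first space by one with a bounded weight---e.g. the pair $\{L_p;L_{p,\theta}\}$, $\mu\equiv1$---in which case your computation does close, since then $\sup_x\mu(x)d^2(x)/\theta(x)\le m_0^{-1}<\infty$ and \eqref{4.2} is trivial. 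In other words, the obstruction you glossed over is located precisely where the statement itself, read with $dL_p=\{dg:g\in L_p\}$, is problematic.
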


\section{Proofs}

\begin{proof}[Proof of \lemref{lem3.1}]

The existence of the derivative $d'(x),$ $x\in\mathbb R$ is a consequence of the theory of implicit functions \cite[Ch.II,\S1,no.3]{7}. It is proven in the same way as in \cite{5}. The following relations are deduced from \eqref{2.2}:
$$\int_0^{\sqrt 2d(x)}\int_{x-t}^{x+t}q(\xi)d\xi dt=2\quad \Rightarrow$$
\begin{align*}
0&=\sqrt2d'(x)\int_{x-\sqrt 2d(x)}^{x+\sqrt2d(x)}q(\xi)d\xi+\int_0^{\sqrt 2 d(x)}[q(x+t)-q(x-t)]dt\\
&=\sqrt2d'(x)\int_{x-\sqrt2d(x)}^{x+\sqrt2d(x)}q(\xi)d\xi+\left[\int_x^{x+\sqrt2d(x)}q(\xi)d\xi-
\int_{x-\sqrt2d(x)}^xq(\xi)d\xi\right]\quad\Rightarrow\\
|d'(x)|&=\frac{1}{\sqrt 2}\left|\int_x^{x+\sqrt2d(x)}q(\xi)d\xi-\int_{x-\sqrt2d(x)}^x q(\xi)d\xi\right|
\left(\int_{x-\sqrt2d(x)}
^{x+\sqrt2d(x)}q(\xi)d\xi\right)^{-1}\le\frac{1}{\sqrt2}.
\end{align*}
\end{proof}

\begin{proof}[Proof of \lemref{lem3.3}]

Below we use Lagrange's formula and \eqref{3.1}:
\begin{align*}
&|d(t)-d(x)|=|d'(\theta)|\, |t-x|\le\frac{d(x)}{\sqrt2}\quad\Rightarrow\\
&d(t)\le\left(1+\frac{1}{\sqrt2}\right)d(x)\le 4d(x)\quad\text{for}\quad t\in[x-d(x),\, x+d(x)]\\
&d(t)\ge\left(1-\frac{1}{\sqrt2}\right)d(x)\ge \frac{d(x)}{4}\quad\text{for}\quad t\in[x-d(x),\, x+d(x)].
\end{align*}
\end{proof}

\begin{proof}[Proof of \lemref{lem3.4}]

Below we use \eqref{2.12} and \eqref{3.2}:
\begin{align*}
 \int_{x-d(x)}^{x+d(x)}\frac{d(\xi)}{\rho(\xi)}&=\int_{x-d(x)}^{x+d(x)}\frac{d(\xi)}{\rho(\xi)}\cdot
\frac{d(x)}{d(\xi)}\cdot\frac{d\xi}{d(x)}\le 2\sqrt 2\cdot 4\cdot 2=c<\infty,\\
 \int_{x-d(x)}^{x+d(x)}\frac{d\xi}{\rho(\xi)}&=\int_{x-d(x)}^{x+d(x)} \frac{d(\xi)}{\rho(\xi)}
  \cdot
\frac{d(x)}{d(\xi)}\cdot\frac{d\xi}{d(x)}\ge \frac{1}{\sqrt 2}\cdot \frac{1}{4}\cdot 2\ge c^{-1}>0.
\end{align*}

Now we use this together with \eqref{2.10} and obtain
\begin{align*}
\frac{u(t)}{u(x)}&\ge\sqrt{\frac{\rho(t)}{\rho(x)}}\exp\left(-\frac{1}{2}\left|\int_x^t\frac{d\xi}
{\rho(\xi)}\right|\right)\ge\sqrt{\frac{d(x)}{\rho(x)}\cdot\frac{d(t)}{d(x)}\cdot\frac{\rho(t)} {d(t)}}\exp\left(-\frac{1}{2}\left|\int_{x-d}^{x+d}\frac{d\xi}{\rho(\xi)}\right|\right)\\
&\ge c^{-1}>0;\\
\frac{u(t)}{u(x)}&\le\sqrt{\frac{\rho(t)}{\rho(x)}}\exp\left(\frac{1}{2}\left|\int_x^t\frac{d\xi}
{\rho(\xi)}\right|\right)\le\sqrt{\frac{d(x)}{\rho(x)}\cdot\frac{d(t)}{d(x)}\cdot\frac{\rho(t)} {d(t)}}\exp\left(\frac{1}{2}\left|\int_{x-d}^{x+d}\frac{d\xi}{\rho(\xi)}\right|\right)\\
&\ge c<\infty.
\end{align*}

Inequalities \eqref{3.3} for the solution $v$ are checked similarly, and estimates \eqref{3.3} for $\rho$ follow from the estimates of $u$ and $v$ and \eqref{2.8}.
\end{proof}

\begin{proof}[Proof of \lemref{lem3.5}]

To prove that the function $F(\eta)$ is differentiable and the functions $F(\eta)$ and $F'(\eta)$ are non-negative for $\eta\ge0$, we use properties of integral. The last assertion of the lemma follows from Lagrange's formula and the relations
$$F(\eta)-2=F(\eta)-F(d(x))=F'(\theta)(\eta-d(x)).$$
\end{proof}

\begin{proof}[Proof of \lemref{lem3.6}]

To obtain \eqref{3.6}, we use the following simple transformations
 \begin{align*}
 \int_{x-t}^{x+t}f(\xi)d\xi&=\int_0^t[f(x+t_1)+f(x-t_1)]dt_1=2f(x)t+\int_0^t[f(x+t_1)-f(x)]dt\\
 &\quad -\int_0^t[f(x)-f(x-t_1)]dt_1=2f(x)t+\int_0^t\left[\int_0^{t_1}(f(x+t_2))'dt_2\right]dt_1\\
 &\quad-\int_0^t\left[\int_0^{t_1}(f(x-t_2))'dt_2\right]dt_1=2f(x)t+\int_0^t\int_0^{t_1}
[f(x+t_2)-f(x-t_2)]'dt_2dt_1\\
&=2f(x)t+\int_0^t\int_0^{t_1}\int_{x-t_2}^{x+t_2}f''(t_3)dt_3dt_2dt_1.
\end{align*}
\end{proof}

\begin{proof}[Proof of \thmref{thm3.7}]

Set
$$\eta(x)=\frac{1-\delta(x)}{\sqrt{q_1(x)}},\qquad \delta(x)=2(\varkappa_1(x)+\varkappa_2(x)),
\qquad |x|\gg1.$$
Then by \eqref{3.4}, \eqref{3.6}, \eqref{3.7}, \eqref{3.8}, \eqref{3.9}, \eqref{3.10} and \eqref{3.11}, we have
\begin{align*}
F(\eta(x))&=\int_0^{\sqrt2\eta(x)}\int_{x-t}^{x+t}q_1(\xi)d\xi dt+\int_0^{\sqrt2\eta(x)}\int_{x-t}^{x+t}q_2(\xi)d\xi dt\\
&\le \int_0^{\sqrt 2\eta(x)}\left[2q_1(x)t+\int_0^t\int_0^{t_1}\int_{x-t_2}^{x+t_2}q_1''(t_3)dt_3dt_2dt_1\right]\\
&\quad +\sqrt 2\eta(x)\sup_{t\in A(x)}\left|\int_{x-t}^{x+t}q_2(\xi)d\xi\right|\le\left(\sqrt 2\eta(x)\right)^2q_1(x)\\
&\quad +\frac{\left(\sqrt 2\eta(x)\right)^3}{6}\sup_{t_2\in A(x)}\left|\int_{x-{t_2}}^{x+{t_2}}q_1''(\xi)d\xi\right|+\sqrt 2(1-\delta(x))\varkappa_2(x)\\
&\le 2(1-\delta(x))^2+\frac{\sqrt 2}{3}(1-\delta(x))^3\varkappa_1(x)+\sqrt 2\varkappa_2(x)\\
&\le 2[(1-\delta(x))^2+\varkappa_1(x)+\varkappa_2(x)]\\
&=2\left[1-\frac{\delta(x)}{2}-\left(\frac{\delta
(x)}{2}-\delta^2(x)\right)-\varkappa_1(x)-\varkappa_2(x)\right]\le 2.
\end{align*}

Hence $d(x)\ge \eta(x)$ for $|x|\gg1$ by \lemref{lem3.5}.
Let now
$$\eta(x)=\frac{1+\delta(x)}{\sqrt{q_1(x)}},\qquad \delta(x)=2(\varkappa_1(x)+\varkappa_2(x)),\qquad |x|\gg1.$$
Then by the same arguments we obtain:
\begin{align*}
F(\eta(x))&=\int_0^{\sqrt 2\eta(x)}\int_{x-t}^{x+t}q_1(\xi)d\xi dt+\int_0^{\sqrt 2\eta(x)}
\int_{x-t}^{x+t}q_2(\xi)d\xi dt\\
&\ge \int_0^{\sqrt \eta(x)}\int_{x-t}^{x+t}\left[2q_1(x)t+\int_0^t\int_0^{t_1}\int_{x-t_2}^{x+t_2}q_1''(t_3)
dt_3dt_2dt_1\right]dt\\
&\quad -\sqrt2 \eta(x)\sup_{t\in A(x)}\left|\int_{x-t}^{x+t}q_2(\xi)d\xi\right|\ge\left(\sqrt 2\eta(x)\right)^2q_1(x)\\
&\quad -\frac{\left(\sqrt 2\eta(x)\right)^3}{6}\sup_{t_2\in A(x)}\left|\int_{x-t_2}^{x+t_2}q_1''(t_3)dt_3\right|-\sqrt 2(1+\delta(x))\varkappa_2(x)\\
&\ge 2(1+\delta(x))^2-\frac{\sqrt 2}{3}(1+\delta(x))^3\varkappa_1(x)-2\varkappa_2(x)\\
&\ge 2(1+\delta(x))+\varkappa_1(x)+\varkappa_2(x)\ge 2.
\end{align*}

Hence $d(x)\le\eta(x)$ for $|x|\gg1$ by \lemref{lem3.5}, and equality \eqref{3.12} is proven. Further, since the function $d(x)\sqrt{q_1(x)}$ is continuous and positive for all $x\in\mathbb R,$ for all $x_0\in (0,\infty)$ we the inequalities:
\begin{gather*}
0<m\le f(x)\le M<\infty,\quad |x|\le x_0\\
 m=\min_{|x|\le x_0}f(x),\quad M=\max_{|x|\le x_0} f(x),\quad f(x)=d(x)\sqrt{q_1(x)}.
 \end{gather*}
Together with \eqref{3.12}, this implies \eqref{3.13}.
\end{proof}

\begin{proof}[Proof of \thmref{thm4.1}]

Assume the contrary. Then \eqref{4.1} holds, the pair $\{L_{p,\mu};L_{p,\theta}\}$ is admissible for \eqref{1.1}, and there exists $x_0\in\mathbb R$ such that one of inequalities \eqref{2.1}, say, the second one, does not hold:
\begin{equation}\label{5.1}
\int_{x_0}^\infty q(t)dt=0\quad \Rightarrow\quad q(x)\equiv 0,\qquad x\in[x_0,\infty).
\end{equation}
Without loss of generality, in what follows we assume $x_0\ge 1.$ Let us introduce the functions $\varphi$ and $f_0.$
\begin{align}
&1)\quad\varphi\in C^\infty(\mathbb R),\quad \supp\varphi=[x_0,\infty),\quad 0\le\varphi(x)\le 1\quad for \quad x\in\mathbb R,\label{5.2}\\
&\qquad\qquad\qquad\qquad \varphi(x)\equiv 1\quad \text{for}\quad x\ge x_0+1\label{5.3}\\
&2)\quad f_0(x):=-\varphi''(x)+q(x)\varphi(x),\quad x\in\mathbb R.\label{5.4}
\end{align}

{}From 1)--2) we obtain the equality
\begin{gather}q(x)\varphi(x)\equiv0,\qquad x\in\mathbb R\quad\Rightarrow\nonumber\\
f_0(x)=-\varphi''(x),\quad x\in\mathbb R\quad\Rightarrow \quad \supp f_0=[x_0,x_0+1].\label{5.5}
\end{gather}
According to \eqref{5.5}, we conclude that $f_0\in L_{p,\theta}:$
$$\|f_0\|_{L_{p,\theta}}^p=\int_{-\infty}^\infty |\theta(x)f_0(x)|^pdx=\int_{x_0}^{x_0+1}|\theta(x)\varphi''(x)|^pdx=c(x_0)<\infty.$$
Since the pair $\{L_{p,\mu};L_{p,\theta}\}$ is admissible for \eqref{1.1}, we conclude that \eqref{1.1} for $f=f_0$ has a unique solution $y_0\in L_{p,\mu}.$ Then (see \eqref{5.4} and \eqref{5.5})
\begin{equation}\label{5.6}
y_0(x)=\varphi(x)+z(x),\qquad x\in\mathbb R,
\end{equation}
where $z(x),$ $x\in\mathbb R,$ is some soluton of \eqref{2.4}. From \eqref{2.4} and \eqref{5.1}, we obtain the equality
\begin{equation}\label{5.7}
z''(x)=0\quad\text{for}\quad x\in [x_0,\infty)\quad \Rightarrow\quad z(x)=c_1+c_2x\quad\text{for}\quad x\ge x_0.
\end{equation}
Let us show that $c_2=0.$ Assume to the contrary that $c_2\ne0. $ Choose $x_1$ so that to have the inequality
\begin{equation}\label{5.8}
\frac{|1+c_1|}{|c_2|}\cdot\frac{1}{x}\le\frac{1}{2}\quad\text{for}\quad x\ge x_1\ge x_0+1.
\end{equation}
Then (see \eqref{5.3})
\begin{align*}
\infty&>\|y_0\|_{p,\mu}^p\ge\int_{x_1}^\infty \mu(x)^p|\varphi(x)+z(x)|^pdx=\int_{x_1}^\infty\mu(x)^p|1+c_1+c_2x|^pdx\\
&\ge |c_2x_1|^p\int_{x_1}^\infty \mu(x)^p\left|1-\left|\frac{1+c_1}{c_2}\right|\frac{1}{x} \right|^pdx\ge\left|\frac{c_2x_1}{2}\right|^p\int_{x_1}^\infty\mu(x)^pdx=\infty,
\end{align*}
and we get a contradiction. Hence $c_2=0.$ Let us check that also $c_1=0.$ Assume that $c_1\ne0.$ Since $\varphi\in C^\infty(\mathbb R),$ from \eqref{5.2} it follows that $\varphi(x_0)=\varphi'(x_0)=0$ and therefore (see \eqref{5.7}):
\begin{align*}
y(x_0)&=\varphi(x_0)+z(x_0)=c_1,\\
y'(x_0)&=\varphi'(x_0)+z'(x_0)=0.
\end{align*}

In addition, $\varphi(x)\equiv0$ for $x\le x_0,$ and therefore from \eqref{5.5} and \eqref{5.6} it follows that the function $z$ is a solution of the Cauchy problem
 \begin{numcases}
{} z''(x)=q(x)z(x),\qquad\qquad x\le x_0\label{5.9}\\
 z(x_0)=c_1,\quad z'(x_0)=0.\label{5.10}
\end{numcases}
 Further, without loss of generality, we assume that $c_1=1.$
 Let us check that then we have the inequality
 \begin{equation}\label{5.11}
 z(x)\ge1\qquad\text{for}\qquad x\le x_0.
 \end{equation}
 Towards this end, first note that since $z(x_0)=1,$ we have $z(x)>0$ in some left half-neighborhood of the point $x_0$ (i.e., for $x\in(x_0-\varepsilon,x_0]$ for some $\varepsilon>0).$ But then $z(x)>0$ for all $x<x_0.$ Indeed, if this is not the case, then $z(x)$ has at least one zero on $(-\infty,x_0).$ Let $\tilde x$ be the first zero of $z(x)$ to the left from $x_0.$ Then $z'(\tilde x)\ge0.$ Indeed, if $z'(\tilde x)<0$ and $z(\tilde x)=0,$ then $z(x)<0$ in some right half-neighborhood of the $\tilde x.$ But $z(x_0)=1$ and $\tilde x<x_0.$ Hence, the interval $(\tilde x,x_0)$ contains a zero of $z(x),$ contrary to the definition of the point $\tilde x.$ Thus $z'(\tilde x)\ge0.$ On the other hand,
  \begin{align*}
  &z'(x_0)-z'(\tilde x)=\int_{\tilde x}^{x_0}q(\xi)z(\xi)d\xi\qquad\Rightarrow\\
  &z'(\tilde x)=-\int_{\tilde x}^{x_0}q(\xi)z(\xi)\le 0.
  \end{align*}

 Hence $z'(\tilde x)=0.$ But then the function $z(x)$ is a solution of the Cauchy problem
 \begin{align*} z''(x)=q(x)z(x),\quad x\le x_0 \\
z(\tilde x)=z'(\tilde x)=0
\end{align*}
 $$\qquad\Rightarrow\qquad z(x)\equiv 0,\quad x\le x_0.$$

 We get a contradiction because $z(x_0)=1.$ Thus $z(x)>0$ for $x\le x_0.$ Then for $x\le x_0$, we have
 $$-z'(x)=z'(x_0)-z'(x)=\int_x^{x_0}q(\xi)z(\xi)d\xi\ge0\quad\Rightarrow \quad z'(x)\le 0,\quad x\le x_0.$$
 Hence $z(x)\ge z(x_0)=1$ for $x\le x_0.$ This implies that
 \begin{align*}
 \infty&>\|y_0\|_{p,\mu}=\int_{-\infty}^\infty |\mu(x)y_0(x)|^pdx\ge\int_{-\infty}^{x_0}|\mu(x)y_0(x)|^pdx\\
 &=\int_{-\infty}^{x_0}|\mu(x)z(x)|^pdx\ge\int_{-\infty}^{x_0}\mu(x)^pdx=\infty.
 \end{align*}
 We get a contradiction. Hence $c_1=0,$ and we obtain the equality
 $$y_0(x)=\varphi(x),\qquad x\ge x_0 \qquad \Rightarrow$$
 \begin{align*}
 \infty&>\|y_0\|_{p,\mu}^p\ge\int_{x_0+1}^\infty|\mu(x)y_0(x)|^pdx=\int_{x_0+1}^\infty|\mu(x)\varphi
 (x)|^pdx\\
 &=\int_{x_0+1}^\infty\mu(x)^pdx=\infty.
 \end{align*}
 We get a contradiction. Hence \eqref{5.1} does not hold.
\end{proof}

\begin{proof}[Proof of \lemref{lem4.2}]

Let us show that in the  case of \eqref{4.2} for all $p\in[1,\infty)$ we have the equalities
\begin{equation}\label{5.12}
\int_{-\infty}^0(\mu(t)u(t))^pdt=\int_0^\infty(\mu(t)v(t)^pdt=\infty.
\end{equation}
We only consider the second equality because the first one can be proved in the same way. For $p=1$ equality \eqref{5.11} follows from \thmref{thm2.2} and \eqref{4.2} is a straightforward manner. Let $p\in(1,\infty)$, $p'=p(p-1)^{-1}.$ The following relations rely only on \thmref{thm2.2}:
\begin{align}
\int_0^\infty&\frac{dt}{v(t)^{p'}} =\int_0^\infty\frac{v'(t)v(t)^{-p'}}{v'(t)}dt\le \frac{1}{v'(0)}\int_0^\infty v'(t)v(t)^{-p'}dt\nonumber\\
&=\frac{1}{p'-1}\frac{1}{v'(0)}\left(\frac{1}{v(0)^{p'-1}}-\frac{1}{v(\infty)^{p'-1}}\right)
\le \frac{1}{p'-1}\frac{1}{v'(0)v(0)^{p'-1}}=c(p)<\infty.\label{5.13}
\end{align}

Let $A>0.$ Below we use H\"older's inequality and \eqref{5.13}:
$$\int_0^A\mu(t)dt\le\left[\int_0^A(\mu(t)v(t))^pdt\right]^{1/p}\cdot\left[\int_0^A\frac{dt}{v(t)p'}\right]
^{1/p'}\le c(p)\left[\int_0^A (\mu(t)v(t))^pdt\right]^{1/p}.$$
Now, to obtain \eqref{5.12}, in the last inequality we let $A$ tend to infinity. Let us now go over to the proof of the lemma. By \thmref{thm2.2}, the general solution of \eqref{2.4} is of the form
$$z(x)=c_1u(x)+c_2v(x),\qquad x\in\mathbb R.$$
Let $z\in L_{p,\mu}.$ Then $c_2=0$. Indeed, if $c_2\ne0,$ then denote $x_1\gg1,$ a number such that for all $x\ge x_1$ we have the inequality (see \eqref{2.7}):
\begin{equation}\label{5.14}
\left|\frac{c_1}{c_2}\right|\frac{u(x)}{v(x)}\le\frac{1}{2},\qquad x\ge x_1.
\end{equation}
Now from \eqref{5.12}, \eqref{5.14} and \thmref{thm2.2} it follows that
\begin{align*}
\infty&>\|z\|_{p,\mu}^p=\int_{-\infty}^\infty|\mu(x)(c_1u(x)+c_2v(x)|^pdx\\
&\ge|c_2 |^p\int_{x_1}^\infty(\mu(x)v(x))^p\left| 1-\left|\frac{c_1}{c_2}\right|\, 
\frac{u(x)}{v(x)}\right|^pdx\ge\left|\frac{c_2}{2}\right|^p\int_{x_1}^\infty(\mu(x)v(x))^pdx=\infty.
\end{align*}
We get a contradiction. Hence $c_2=0.$ The equality $c_1=0$ now follows from \eqref{2.5} and \eqref{5.12}.
\end{proof}

\begin{proof}[Proof of \thmref{thm4.3} for $p\in(1,\infty)$] \textit{Necessity}.

 We need the following lemma.

 \begin{lem}\label{lem5.1}
 Let $p\in[1,\infty)$. Suppose that conditions \eqref{4.2} hold, and the pair $\{L_{p,\mu};L_{p,\theta}\}$ is admissible for \eqref{1.1}. Then, if $f\in L_p$ and $\supp f=[x_1,x_2],$ $x_2-x_1<\infty,$ then $f\in L_{p,\theta}$ and the solution $y\in L_{p,\mu}$ of \eqref{1.1} which corresponds to $f$ is of the form \eqref{2.13}.
 \end{lem}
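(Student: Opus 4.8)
The plan is to exhibit the candidate $Gf$ explicitly, check that it solves \eqref{1.1}, and then use admissibility together with the divergence relations \eqref{5.12} (established in the proof of \lemref{lem4.2}) to identify it with $y$. First I dispose of the membership claim. Since $\theta$ is continuous and positive and $\supp f=[x_1,x_2]$ is compact, $\theta$ is bounded on $[x_1,x_2]$, whence
\[
\|f\|_{p,\theta}^p=\int_{x_1}^{x_2}|\theta(x)f(x)|^p\,dx\le\Big(\max_{[x_1,x_2]}\theta\Big)^p\|f\|_p^p<\infty,
\]
so $f\in L_{p,\theta}$. By requirement I) of Definition~\ref{defn1.1} there is then a unique solution $y\in L_{p,\mu}$ of \eqref{1.1} corresponding to $f$, and it remains to prove $y=Gf$.

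Next I show that $Gf$ is itself a solution. Using \eqref{2.9} and \eqref{2.13},
\[
(Gf)(x)=u(x)\int_{-\infty}^{x}v(t)f(t)\,dt+v(x)\int_{x}^{\infty}u(t)f(t)\,dt.
\]
Because $f$ has compact support and lies in $L_p$, it lies in $L_1$, so both integrals are finite and absolutely continuous, and $u,v\in C^1$; hence $Gf$ has the regularity demanded of a solution. Differentiating, the contributions of the inner derivatives $vf$ and $-uf$ cancel in $(Gf)'$; differentiating once more and invoking the Wronskian identity \eqref{2.6} together with $u''=qu$, $v''=qv$ yields $-(Gf)''+q\,Gf=f$ a.e. The decisive observation is the off-support behaviour: for $x>x_2$ the second integral vanishes, so $(Gf)(x)=C_+u(x)$ with $C_+=\int_{x_1}^{x_2}vf$, while for $x<x_1$ the first integral vanishes, so $(Gf)(x)=C_-v(x)$ with $C_-=\int_{x_1}^{x_2}uf$; thus off $[x_1,x_2]$ the function $Gf$ is a pure multiple of the recessive solution.

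Finally I compare. The difference $w=y-Gf$ solves the homogeneous equation \eqref{2.4}, so by \thmref{thm2.2} $w=c_1u+c_2v$. On the right tail $x>x_2$ one has $y=(c_1+C_+)u+c_2v$; if $c_2\neq0$ then, since $u(x)/v(x)\to0$ by \eqref{2.7}, $|y(x)|\ge\tfrac{|c_2|}{2}v(x)$ for large $x$, and the second equality in \eqref{5.12} forces $\int_{x_2}^{\infty}(\mu|y|)^p\,dx=\infty$, contradicting $y\in L_{p,\mu}$; hence $c_2=0$. With $c_2=0$, on the left tail $x<x_1$ one has $y=c_1u+C_-v$, and the symmetric argument using $v(x)/u(x)\to0$ and the first equality in \eqref{5.12} gives $c_1=0$. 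Therefore $w\equiv0$, that is $y=Gf$, which is precisely form \eqref{2.13}.

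The step I expect to be the main obstacle is the very last comparison, and in particular the temptation to shortcut it via \lemref{lem4.2}. Applying that lemma directly would require knowing $Gf\in L_{p,\mu}$, i.e.\ $\mu u\in L_p$ near $+\infty$ and $\mu v\in L_p$ near $-\infty$, and this is not available a priori. The remedy is to argue at the level of the coefficients $c_1,c_2$ rather than at the level of $Gf$: since off the support $Gf$ reduces to a multiple of the recessive solution, the dominant part of $y$ in each tail is the \emph{growing} solution, whose weighted $L_p$-norm diverges by \eqref{5.12}, and this forces each coefficient to vanish without any integrability information about the recessive solution.
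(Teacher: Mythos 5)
Your proof is correct and follows essentially the same route as the paper: establish $f\in L_{p,\theta}$ from the boundedness of $\theta$ on the compact support, verify that $Gf$ is a well-defined particular solution, write the unique solution as $y=Gf+c_1u+c_2v$, and kill $c_2$ and then $c_1$ by combining the tail growth of $v$ (resp. $u$) with the divergence relations \eqref{5.12} from the proof of \lemref{lem4.2}. Your closing remark about why \lemref{lem4.2} cannot be applied directly to $y-Gf$ is exactly the right caution, and it is precisely why the paper, like you, argues at the level of the coefficients.
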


\renewcommand{\qedsymbol}{}
 \begin{proof}
 Below we only consider the case $p\in (1,\infty)$ (for $p=1$ the arguments are similar). Let us continue the function $f$ by zero beyond the segment $[x_1,x_2]$ and maintain the original notation. From the obvious inequalities
 \begin{equation}\label{5.15}
 c^{-1}\le\theta(x)\le c,\quad x\in[x_1,x_2],\quad c=c(x_1,x_2),
 \end{equation}
 it follows that $f\in L_{p,\theta}.$ Set (see \eqref{2.9}, \eqref{2.13})
 \begin{align}
 \tilde y(x)&=\int_{-\infty}^\infty G(x,t)f(t)dt\nonumber\\
 &=u(x)\int_{-\infty}^xv(t)f(t)dt+v(x)\int_x^\infty u(t)f(t)dt,\quad x\in\mathbb R.\label{5.16}
 \end{align}

 Let us estimate the integrals in \eqref{5.16}:
 \begin{align}
 \int_{-\infty}^xv(t)|f(t)|dt&\le\left[\int_{x_1}^{x_2}\left(\frac{v(t)}{\theta(t)}\right)^{p'}dt
 \right]^{1/p'}\cdot\left[\int_{x_1}^{x_2}|\theta(t)f(t)|^pdt\right]^{1/p}\nonumber\\
 &\le c\left(\int_{x_1}^{x_2}v(t)^{p'}dt\right)^{1/p'}\cdot\|f\|_{p,\theta},\quad x\in\mathbb R,\label{5.17}\\
   \int_{x}^\infty u(t)|f(t)|dt&\le\left[\int_{x_1}^{x_2}\left(\frac{u(t)}{\theta(t)}\right)^{p'}dt
 \right]^{1/p'}\cdot\left[\int_{x_1}^{x_2}|\theta(t)f(t)|^pdt\right]^{1/p}\nonumber\\
 &\le c\left(\int_{x_1}^{x_2}u(t)^{p'}dt\right)^{1/p'}\cdot\|f\|_{p,\theta},\quad x\in\mathbb R.\label{5.18}.
 \end{align}

 {}From \eqref{5.17} and \eqref{5.18} it follows that the function $\tilde y(x),$ $s\in\mathbb R,$ is well-defined. It is also easy to see that the function $\tilde y(x),$ $x\in\mathbb R$ is a particular solution of \eqref{1.1}. But, since $f\in L_{p,\theta},$ \eqref{1.1} has a unique solution $y\in L_{p,\theta}.$ This means that we have the equality
 $$y(x)=\tilde y(x)+c_1u(x)+c_2v(x),\qquad x\in\mathbb R.$$

 Let us check that $c_1=c_2=0.$ Assume, say, that
$c_2\ne0.$ Then for $x\ge x_2$, we get
\begin{align*}
|y(x)|&\ge |c_2|v(x)-|c_1|u(x)-u(x)\int_{x_1}^{x_2}v(t)|f(t)|dt\\
&=|c_1|v(x)\left[1-\left|\frac{c_1}{c_2}\right|\frac{u(x)}{v(x)}-\frac{u(x)}{v(x)}\int_{x_1}^{x_2}
v(t)|f(t)|dt\right].
\end{align*}

{}From \eqref{2.7} and \eqref{5.17} it follows that there exists $x_3\ge\max\{1,x_2\}$ such that
$$|y(x)|\ge\frac{1}{2}|c_2|v(x)\qquad\text{for}\quad x\ge x_3\qquad\Rightarrow \qquad \text{(see \eqref{5.12}):}$$
$$\infty>\|y\|_{p,\mu}^p\ge\int_{x_1}^\infty|\mu(x)y(x)|^pdx\ge\left|\frac{c_2}{2}\right|^p\int_{x_3}^\infty
|\mu(x)v(x)|^pdx=\infty.$$
We get a contradiction. Hence $c_2=0.$ Similarly, we prove that also $c_1=0,$ and therefore $y=\tilde y$ (see \eqref{5.16}). Let $[x_1,x_2]$ be any finite segment. Set
\begin{equation}\label{5.19}
f(t)=\begin{cases} \theta(t)^{-p'}\cdot u(t)^{p'-1},&\quad t\in [x_1,x_2]\\
0,&\quad t\notin [x_1,x_2]\end{cases}
\end{equation}
Then
\begin{equation}\label{5.20}
\|f\|_{L_{p,\theta}}^p=\int_{x_1}^{x_2}|\theta(t)f(t)|^pdt=\int_{x_1}^{x_2}\frac{\theta(t)^pu^{p(p'-1)}(t)}{\theta(t)^{p'p}} dt=\int_{x_1}^{x_2}\left(\frac{u(t)}{\theta(t)}\right)^{p'}dt<\infty.
\end{equation}
Therefore, since the pair $\{L_{p,\mu};L_{p,\theta}\}$ is admissible for \eqref{1.1}, in the case of \eqref{5.19} equation \eqref{1.1} has a solution $y\in L_{p,\mu}.$ This solution is of the form \eqref{2.13} (see \lemref{lem5.1}). This implies that
\begin{align}
&\infty>\|y\|_{p,\mu}^p=\int_{-\infty}^\infty|\mu(x)y(x)|^pdx\nonumber\\
&=\left\{\int_ {-\infty}^\infty\mu(x)^p\left[u(x)\int_ {-\infty}^x v(t)f(t)dt+v(x) \int_x^\infty u(t)f(t)dt\right]^pdx\right\}\nonumber\\
 &\ge\int_{-\infty}^\infty(\mu(x)v(x))^p\left(\int_x^\infty u(t)f(t)dt\right)^pdx\ge\int_{-\infty}^{x_1}(\mu(x)v(x))^p\left(\int_x^\infty
 u(t)f(t)dt\right)^pdx\nonumber\\
 &\ge\int_{-\infty}^{x_1}(\mu(x)v(x))^pdx \left(\int_{x_1}^{x_2}u(t)f(t)dt\right)^p=\int_{-\infty}^{x_1}(\mu(x)v(x))^pdx
 \left(\int_{x_1}^{x_2}\left(\frac{u(t)}{\theta(t)}\right)^{p'}dt\right)^p\label{5.21}
\end{align}

Now, using \eqref{5.21}, \eqref{5.20} and \eqref{1.5}, we obtain
\begin{align*}
\left[\int_{-\infty}^{x_1}(\mu(x)v(x))^pdx\right]^{1/p}\int_{x_1}^{x_2}\left(\frac{u(t)}{\theta(t)}\right)^{p'}dt&\le
\|y\|_{p,\mu}\le c(p)\|f\|_{p,\theta}\\
&=c(p)\left[\int_{x_1}^{x_2}\left(\frac{u(t)}{\theta(t)}\right)^{p'}dt\right]^{1/p}\quad\Rightarrow\\
\end{align*}
$$\left(\int_{-\infty}^{x_1}(\mu(t)v(t))^pdt\right)^{1/p}\left(\int_{x_1}^{x_2}\left(\frac{u(t)}{\theta(t)}\right)^{p'}dt
\right)^{1/p'}\le c(p)<\infty.$$
Since in this inequality $x_1$ and $x_2$ $(x_1\le x_2)$ are arbitrary numbers, we conclude that
$$M=\sup_{x\in\mathbb R}\left(\int_{-\infty}^{x}(\mu(t)v(t))^pdt\right)^{1/p}\cdot\left(\int_x^\infty\left( \frac{u(t)}{\theta(t)}\right)^{p'}dt\right)^{1/p'}\le c(p)<\infty.$$
This inequality means that the operator $S_2:L_p\to L_p,$
\begin{equation}\label{5.22}
(S_2f)(x)=\mu(x)v(x)\int_x^\infty\frac{u(t)}{\theta(t)}f(t)dt,\quad x\in\mathbb R
\end{equation}
is bounded (see \thmref{thm2.7}). Similarly, we use \thmref{thm2.8} to conclude that the operator $S_1:L_p\to L_p,$
\begin{equation}\label{5.23}
(S_1f)(x)=\mu(x)u(x)\int_{-\infty}^x\frac{v(t)}{\theta(t)}f(t)dt,\quad x\in\mathbb R
\end{equation}
is bounded. Since we have the equality (see \eqref{2.9} and \eqref{4.3})
\begin{equation}\label{5.24}
S=S_1+S_2
\end{equation}
our assertion now follows from the triangle inequality for norms.
 \end{proof}
 \end{proof}

\newpage

\begin{proof}[Proof of \thmref{thm4.3}] \textit{Sufficiency.}
\renewcommand{\qedsymbol}{\openbox}
\begin{lem}\label{lem5.2}
Let $p\in[1,\infty),$ and let $S,$ $S_1,$ $S_2$ be operators \eqref{4.3}, \eqref{5.23} and \eqref{5.22}, respectively.
Then we have the inequalities
\begin{equation}\label{5.25}
\frac{\|S_1\|_{p\to p}+\|S_2\|_{p\to p}}{2}\le \|S\|_{p\to p}\le\|S_1\|_{p\to p}+\|S_2\|_{p\to p}.
\end{equation}
\end{lem}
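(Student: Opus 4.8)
The plan is to exploit the positivity of all three kernels. Recall from \eqref{5.24} that $S=S_1+S_2$, and note that by \thmref{thm2.2} the functions $u,v$ are strictly positive, while $\mu,\theta$ are positive and continuous by hypothesis; hence the kernels of $S_1$, $S_2$ and $S$ (namely $\mu(x)u(x)v(t)/\theta(t)$, $\mu(x)v(x)u(t)/\theta(t)$ and $\mu(x)G(x,t)/\theta(t)$, see \eqref{2.9}) are all nonnegative for $x,t\in\mathbb R$. The upper bound in \eqref{5.25} is then immediate: by the triangle inequality for the operator norm, $\|S\|_{p\to p}=\|S_1+S_2\|_{p\to p}\le\|S_1\|_{p\to p}+\|S_2\|_{p\to p}$.

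For the lower bound, first I would record the standard fact that a positive-kernel operator attains its norm on nonnegative functions. Indeed, if $T$ has kernel $k\ge0$, then $|(Tf)(x)|\le(T|f|)(x)$ pointwise, so $\|Tf\|_p\le\|T|f|\|_p$ and, since $\||f|\|_p=\|f\|_p$, one has $\|T\|_{p\to p}=\sup\{\|Tf\|_p:f\ge0,\ \|f\|_p\le1\}$. I apply this observation to $S_1$, $S_2$ and $S$ simultaneously.

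Now fix any $f\ge0$ with $\|f\|_p\le1$. Because the kernels are nonnegative we have $(S_1f)(x)\ge0$ and $(S_2f)(x)\ge0$, and from $S=S_1+S_2$ we obtain the pointwise domination $0\le(S_1f)(x)\le(Sf)(x)$ and $0\le(S_2f)(x)\le(Sf)(x)$. Raising to the $p$-th power and integrating (this works uniformly in $p\in[1,\infty)$, with the case $p=1$ being just integration) yields $\|S_1f\|_p\le\|Sf\|_p\le\|S\|_{p\to p}$ and likewise $\|S_2f\|_p\le\|S\|_{p\to p}$. Taking the supremum over such $f$ gives $\|S_1\|_{p\to p}\le\|S\|_{p\to p}$ and $\|S_2\|_{p\to p}\le\|S\|_{p\to p}$; adding these two inequalities and dividing by $2$ produces the left-hand inequality in \eqref{5.25}.

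There is essentially no serious obstacle here; the only point requiring care is the reduction to nonnegative test functions, which hinges on the positivity of the kernels rather than on any cancellation, so that the pointwise bounds $S_if\le Sf$ (valid only for $f\ge0$) may legitimately be used. The argument is uniform in $p\in[1,\infty)$ and uses neither the Hardy-operator criteria of Theorems \ref{thm2.7}, \ref{thm2.8} nor the explicit form of $G$ beyond its nonnegativity — it is purely a consequence of $S$ splitting into two positive pieces.
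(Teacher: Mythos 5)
Your proof is correct and follows essentially the same route as the paper: the upper bound by the triangle inequality applied to $S=S_1+S_2$, and the lower bound by exploiting the nonnegativity of the kernels to get the pointwise domination $|S_if|\le S_i|f|\le S|f|$, hence $\|S_i\|_{p\to p}\le\|S\|_{p\to p}$ for $i=1,2$. The paper merely compresses your two steps (reduction to nonnegative test functions, then pointwise comparison) into a single chain of inequalities $\|S_1f\|_p^p\le\|S(|f|)\|_p^p\le\|S\|_{p\to p}^p\|f\|_p^p$.
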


 \begin{proof}
 The upper estimate in \eqref{5.25} follows from \eqref{5.24}. To prove the lower estimate in \eqref{5.25}, we use the following obvious relations:
 \begin{align*}
 \|S_1(f)\|_p^p&=\int_{-\infty}^\infty \mu(x)^p\left|u(x)\int_{-\infty}^x\frac{v(t)}{\theta(t)}f(t)dt\right|^pdx\\
 &\le
\int_{-\infty}^\infty\mu(x)^p\left(u(x)\int_{-\infty}^x\frac{v(t)}{\theta(t)}|f(t)|dt
\right)^pdx\\
&\le\int_{-\infty}^\infty\mu(x)^p\left[u(x)\int_{-\infty}^x\frac{v(t)}{\theta(t)}|f(t)|dt+v(x)\int_x^\infty\frac{u(t)}{\theta
(t)}|f(t)|dt\right]^pdx\\
&=\int_{-\infty}^\infty\left|\mu(x)\int_{-\infty}^\infty\frac{G(x,t)}{\theta(t)}|f(t)|
dt\right|^pdx=\|S(|f|)\|_p^p
\le\|S\|_{p\to p}^p\cdot\|f\|_p^p.
\end{align*}

This implies that $\|S_1\|_{p\to p}\le\|S\|_{p\to p}.$ Similarly, we check that $\|S_2\|_{p\to p}\le\|S\|_{p\to p}.$ These inequalities imply the lower estimate in \eqref{5.25}.
\end{proof}

Let us now go over to the proof of the theorem. Since \eqref{2.1} holds, equation \eqref{2.4} has a FSS $\{u,v\}$ with the properties from \thmref{thm2.2}. Since the operator $S:L_p\to L_p $ is bounded,  so are also the operators $S_i: L_p\to L_p,$ $i=1,2$ (see \eqref{5.25}). Then, by Theorems \thmref{thm2.7} and \thmref{thm2.8}, we obtain the inequalities
\begin{align}
\tilde M_p&\doe \sup_{x\in\mathbb R}\left(\int_{-\infty}^x\left(\frac{v(t)}{\theta(t)}\right)^{p'}dt\right)^{1/p'}\cdot\left(\int_x^\infty(\mu(t)u(t)^pdt\right) ^{1/p}<\infty,\label{5.26}\\
 M_p&\doe \sup_{x\in\mathbb R} \left(\int_{-\infty}^x(\mu(t)v(t)^pdt\right) ^{1/p} \cdot \left(\int^{ \infty}_x\left(\frac{u(t)}{\theta(t)}\right)^{p'}dt\right)^{1/p'}<\infty.\label{5.27}
\end{align}
These inequalities imply that the function
\begin{equation}\label{5.28}
y(x)=(Gf)(x)=u(x)\int_{-\infty}^x v(t)f(t)dt+v(x)\int_x^\infty u(t)f(t)dt,\quad x\in\mathbb R
\end{equation}
is well-defined because the integrals in \eqref{5.28} converge:
\begin{align*}
\int_{-\infty}^x v(t)|f(t)|dt&\le\left(\int_{-\infty}^x\left(\frac{v(t)}{\theta(t)}\right)^{p'}dt\right)^{1/p'}\cdot\|f\|_{p,\theta},\quad x\in\mathbb R, \\
\int^{\infty}_x u(t)|f(t)|dt&\le\left(\int_{-\infty}^x\left(\frac{u(t)}{\theta(t)}\right)^{p'}dt\right)^{1/p'}\cdot\|f\|_{p,\theta},\quad x\in\mathbb R.\\
\end{align*}

Further, one can check in a straightforward manner (see \thmref{thm2.2}) that the function $y(x),$ $x\in\mathbb R$ is a solution of \eqref{1.1}. In addition,
\begin{align*}
\|y\|_{p,\mu}&=\left[\int_{-\infty}^\infty\left(\mu(x)\left|\int_{-\infty}^\infty G(x,t)f(t)dt\right|\right)^pdx\right]^{1/p}\\
&=\left[\int_{-\infty}^\infty\left(\mu(x)\left|\int_{-\infty}^\infty\frac{G(x,t)}{\theta(t)}(\theta(t)f(t))dt\right|\right)^p
dx\right]^{1/p}\\
&=\|S(\theta f)\|_p\le\|S\|_{p\to p}\cdot\|\theta f\|_p=\|S\|_{p\to p}\cdot\|f\|_{p,\theta},
\end{align*}
i.e., \eqref{1.5} holds. It only remains to refer to \lemref{lem4.2}.
 \end{proof}

 \renewcommand{\qedsymbol}{}
\begin{proof}[Proof of \thmref{thm4.3} for $p=1$] \textit{Necessity}.

Let $[x_1,x_2]$ be an arbitrary finite segment, and let $f\in L_1$ be such that $\supp f=[x_1,x_2].$ Then (see \eqref{5.15}) $f\in L_{1,\theta}$ and therefore equation \eqref{1.1} with such a right-hand side has a unique solution $y\in L_{1,\mu}.$ By \lemref{lem5.1}, this solution is given by formula \eqref{2.13} and satisfies \eqref{1.5}. Let us introduce  the operator $\tilde S:$
$$(\tilde Sg)(x)=\mu(x)\int_{x_1}^{x_2}\frac{G(x,t)}{\theta(t)}g(t)dt,\quad x\in[x_1,x_2],\quad g\in L_1(x_1,x_2)$$
and the function $g$ given on the sequence $[x_1,x_2]$ by the formula
$$g(x)=\theta(x)f(x),\qquad x\in [x_1,x_2].$$
Then we have
\begin{align*}
\|\tilde Sg\|_{L_1(x_1,x_2)}&=\int_{x_1}^{x_2}\left|\mu(x)\int_{x_1}^{x_2}\frac{G(x,t)}{\theta(t)}g(t)dt\right|dx\\
&=\int_{x_1}^{x_2}\mu(x)\left|\int_{x_1}^{x_2}G(x,t)f(t)dt\right|dx=\int_{x_1}^{x_2}\mu(x)\left|\int_{-\infty}^\infty G(x,t)f(t)dt\right|dx\\
&=\int_{x_1}^{x_2}\mu(x)|y(x)|dx\le\int_{-\infty}^\infty \mu(x)|y(x)|dx=\|y\|_{1,\mu}\le c(1)\|f\|_{1,\theta}\\
&=c(1)\int_{-\infty}^\infty\theta(t)|f(t)|dt=c(1)\int_{x_1}^{x_2}|\theta(t)f(t)|dt=c(1)\|g\|_{L_1(x_1,x_2)}.
\end{align*}

Together with \eqref{2.22} and \eqref{2.9}, this implies that
\begin{align*}
&\|\tilde S\|_{L_1(x_1,x_2)\to L_1(x_1,x_2)}\le c(1)\qquad \Rightarrow\\
&\sup_{x\in[x_1,x_2]}\frac{1}{\theta(x)}\int_{x_1}^{x_2}\mu(t)G(x,t)dt=\|\tilde S\|_{L_1(x_1,x_2)\to L_1(x_1,x_2)}\le c(1).
\end{align*}
In the last inequality, $x_1$ and $x_2$ are arbitrary numbers. Hence
$$\sup_{x\in\mathbb R}\frac{1}{\theta(x)}\int_{-\infty}^\infty \mu(t)G(x,t)dt\le c(1)<\infty.$$
But then by \thmref{thm2.9} we obtain that $\|S\|_{L_1\to L_2}\le c(1)<\infty,$ as required.
\end{proof}

\renewcommand{\qedsymbol}{\openbox}

\begin{proof}[Proof of \thmref{thm4.3} for $p=1$] \textit{Sufficiency.}

{}From \eqref{2.1} it follows that equation \eqref{2.4} has a FSS $\{u,v\}$ (see \thmref{thm2.2}), the Green function and the operator $S$ are defined  (see \eqref{2.9} and \eqref{4.3}). Further, the operators $S_i,$ $i=1,2$ (see \eqref{4.4}, \eqref{4.5}) are bounded because so is the operator $S:L_1\to L_1$ (see \lemref{lem5.2}). Let now $f\in L_{1,\theta}$ and $g=\theta\cdot|f|.$ Then $0\le g\in L_1,$ $S_ig\in L_1,$ $i=1,2,$ and one has the inequalities
\begin{equation}\label{5.29}
0\le (S_ig)(x)<\infty,\qquad \forall x\in\mathbb R,\qquad i=1,2.
\end{equation}

We will prove \eqref{5.29} for $i=1$ (the case $i=2$ is considered in a similar way). Assume to the contrary that there exists $x_1\in\mathbb R$ such that $(S_1g)(x_1)=\infty.$ Let $x_2>x_1.$ Then, since the functions $\mu$ and $u$ are continuous, we have
\begin{align*}
(S_1g)(x_2)&=\mu(x_2)u(x_2)\int_{-\infty}^{x_2}\frac{v(t)}{\theta(t)}g(t)dt\\
&\ge \frac{\mu(x_2)u(x_2)}{\mu(x_1)u(x_1)}\left[\mu(x_1)u(x_1)\int_{-\infty}^{x_1}\frac{v(t)}{\theta(t)}g(t)dt\right]=
\frac{\mu(x_2)u(x_2)}{\mu(x_1)u(x_1)}(S_1g)(x_1)=\infty\\
\Rightarrow\qquad\qquad\qquad\\
\infty&>\|Sg\|_1=\int_{-\infty}^\infty\mu(x)u(x)\left|\int_{-\infty}^x\frac{v(t)}{\theta(t)}g(t)dt\right|dx\\
&\ge\int_{x_1}^\infty \mu(x)u(x)\left(\int_{-\infty}^x\frac{v(t)}
{\theta(t)}g(t)dt\right)dx=\int_{x_1}^\infty(S_1g)(x)dx=\infty.
\end{align*}
We get a contradiction. Hence, inegualities  \eqref{5.29} hold. From \eqref{5.29} and the definition of $g$ we obtain
\begin{equation}\label{5.30}
\int_{-\infty}^xv(t)|f(t)|dt<\infty,\qquad \int_x^\infty u(t)|f(t)|dt<\infty \qquad \forall x\in\mathbb R.
\end{equation}
For instance,
\begin{align*}
\int_{-\infty}^x v(t)|f(t)|dt&=\frac{1}{\mu(x)u(x)}\left[\mu(x)u(x)\int_{-\infty}^x \frac{v(t)}{\theta(t)}\cdot(\theta(t) |f(t)|)dt\right]\\
&=\frac{1}{\mu(x)u(x)}(S_1g)(x)<\infty\quad\Rightarrow\quad\eqref{5.30}
\end{align*}

Thus, if $f\in L_{1,\theta},$ then by \eqref{5.30} the following integrals converge:
$$\int_{-\infty}^x v(t)f(t)dt,\qquad \int_x^\infty u(t)f(t)dt,\qquad x\in\mathbb R$$
and therefore, for $x\in\mathbb R,$ the function
$$y(x)=(Gf)(x)=u(x)\int_{-\infty}^x v(t)f(t)dt+v(x)\int_x^\infty u(t)f(t)dt,\quad x\in\mathbb R$$
is well-defined. This immediately implies that $y(x)$ is a solution of \eqref{1.1}. In addition, \eqref{1.5} holds:
\begin{align*}
\|\mu y\|_1&=\int_{-\infty}^\infty \mu(x)\left|\int_{-\infty}^\infty \frac{G(x,t)}{\theta(t)}(\theta(t)f(t))dt\right|dx \le \int_{-\infty}^\infty \mu(x)\int_{-\infty}^\infty\frac{G(x,t)}{\theta(t)}|g(t)|dt\, d\\ &=\|Sg\|_1\le\|S\|_{1\to 1}\cdot \|g\|_1
 =\|S\|_{1\to 1}\cdot\|f\|_{1,\theta}\quad \Rightarrow\quad\eqref{1.5}.
\end{align*}
It remains to note that by \lemref{lem4.2} this solution is unique in the class $L_{1,\mu}.$
\end{proof}

\begin{proof}[Proof of \lemref{lem4.5}]

{}From \eqref{2.2} we obtain the inequality
$$2\le\sqrt 2 d(x)\int_{x-\sqrt 2 d(x)}^{x+\sqrt 2 d(x)}q(\xi)d\xi,\qquad x\in\mathbb R.$$
Together with the formula for $|d'(x)|$ (see the proof of \lemref{lem3.1}), this implies that
\begin{align*}
|d'(x)|&\le \frac{d(x)}{\sqrt 2}\left|\int_x^{x+\sqrt 2 d(x)}q(\xi)d\xi-\int_{x-\sqrt 2 d(x)}^x q(\xi)d\xi\right|\cdot\left(d(x)\int_{x-\sqrt 2 d(x)}^{x+\sqrt 2 d(x)}q(\xi)d\xi\right)^{-1} \\
&\le\frac{1}{2}d(x)\left|\int_x^{x+\sqrt 2 d(x)}q(\xi)d\xi-\int^x_{x-\sqrt 2 d(x)}\right|=\frac{\nu(x)}{2},\quad x\in\mathbb R\quad \Rightarrow
\end{align*}
\begin{equation}
\lim_{|x|\to \infty}d'(x)=0.\label{5.31}
\end{equation}

Let us now go to \eqref{4.8}. Fix $\varepsilon\in(0,1/\sqrt 2)$ (see \eqref{4.9} regarding the case $\varepsilon\ge 1/\sqrt 2$). Then there exists $x_0=x_0(\varepsilon)\gg 1$ such that we have the inequality (see \eqref{5.31}
\begin{equation}\label{5.32}
|d'(x)|\le\varepsilon\qquad\text{if}\qquad |x|\ge x_0.
\end{equation}
It is easy to see that all possible cases of placing the numbers $t,x\in\mathbb R$ and the segments $(-\infty, x_0],$ \ $[-x_0, x_0]$ and $[x_0, \infty]$ can be put in the following table:

 \begin{equation}\vbox{\offinterlineskip
\hrule
\hrule
\halign{&\vrule#&
\strut
#\tabskip=.09em\cr
height4pt&\omit&&\omit&&\omit  &\cr
&\qquad \qquad$1.1$
\hfil
&& \qquad \quad$1.2$\quad&&\qquad \qquad$ 1.3$
  \hfil &\cr
  &\quad $x\in(-\infty,-x_0]$\hfil&& \quad  $x\in(-\infty,-x_0]$\hfil&& \quad  $x\in(-\infty,-x_0]$\hfil &\cr
  &\quad $t\in(-\infty,-x_0]\hfil$&& \quad  $t\in[-x_0,x_0]$\hfil&& \quad   $t\in[x_0,\infty]$\hfil &\cr
 \noalign{\hrule}
 &\qquad \qquad$2.1$
\hfil
&& \qquad \quad$2.2$\quad&&\qquad \qquad$ 2.3$
  \hfil &\cr
  &\quad $x\in[-x_0,x_0]$\hfil&& \quad  $x\in[-x_0,x_0]$\hfil&& \quad  $x\in[-x_0,x_0]$\hfil &\cr
  &\quad $t\in(-\infty,-x_0]\hfil$&& \quad  $t\in[-x_0,x_0]$\hfil&& \quad   $x\in[x_0,\infty)$\hfil &\cr
 \noalign{\hrule}
 &\qquad \qquad$3.1$
\hfil
&& \qquad \quad$3.2$\quad&&\qquad \qquad$ 3.3$
  \hfil &\cr
  &\quad $x\in(x_0,\infty]$\hfil&& \quad  $x\in(x_0,\infty)$\hfil&& \quad  $x\in(x_0,\infty)$\hfil &\cr
  &\quad $t\in(-\infty,-x_0]\hfil$&& \quad  $t\in[-x_0,x_0]$\hfil&& \quad   $t\in[-x_0,\infty)$\hfil &\cr
 \noalign{\hrule}
} \hrule}
 \raisetag{3\baselineskip} \label{5.33}
 \end{equation}

 \bigskip
We check inequalities \eqref{4.8} separately in each case appearing in \eqref{5.33}.

 \newpage

\noindent \textit{Cases 1.1 and 3.3}.

Both cases are treated in the same way. Let us introduce the standing notation for the whole proof:
\begin{gather*}
m(\varepsilon)=\min_{t\in[-x_0,x_0]} d(t),\qquad M(\varepsilon)=\max_{t\in[-x_0,x_0]}d(t)\\
c(\varepsilon)=\max\left\{\frac{1}{m(\varepsilon)}, M(\varepsilon) \right\},\\
a=\min\{x,t),\qquad b=\max\{x,t\}.
\end{gather*}

Consider, say, Case 3.3. The following implications are obvious:
\begin{gather*}
-\varepsilon \le d'(\xi)\le\varepsilon\quad\text{for}\quad \xi\in[a,b]\Rightarrow -\frac{\varepsilon}{d(\xi)}\le \frac{d'(\varepsilon)}{d(\xi)}\le\frac{\varepsilon}{d(\xi)},\quad \xi\in[a,b]\\
 \Rightarrow - \varepsilon\left|\int_x^t\frac{d\xi}{d(\xi)}\right| =-\varepsilon\int_a^b\frac{d\xi}{d(\xi)}\le \ln\frac{d(b)}{d(a)}\le\varepsilon\int_a^b\frac{d\xi}{d(\xi)}=\varepsilon\left|\int_x^t\frac{d\xi}{d(\xi)}\right|\quad\Rightarrow\\
 \exp\left(-\varepsilon\left|\int_x^t\frac{d\xi}{d(\xi)}\right|\right) \le \frac{d(b)}{d(a)},\qquad \frac{d(a)}{d(b)}\le \exp\left(\varepsilon\left|\int_x^t\frac{d\xi}{d(\xi)}\right|\right)\quad\Rightarrow \quad \eqref{4.8}.
\end{gather*}

\noindent \textit{Cases 1.2 and 2.1}.

Both cases are treated in the same way.
For instance, in Case 1.2 we have
 \begin{align*}
 \frac{d(t)}{d(x)}&=\frac{d(t)}{d(-x_0)}\cdot \frac{d(-x_0)}{d(x)}\le c(\varepsilon)^2\exp\left(\varepsilon\left|\int_x^{-x_0}\frac{d\xi}{d(\xi)}\right|\right)\\
 &\le c(\varepsilon)^2\exp\left(\varepsilon\left|\int_x^{-x_0}\frac{d\xi}{d(\xi)}+\int_{-x_0}^t\frac{d\xi}{d(\xi)}\right|\right)=
 c(\varepsilon)^2\exp\left(\varepsilon\left|\int_x^t\frac{d\xi)}{d(\xi)}\right|\right);
 \end{align*}
\begin{align*}
\frac{d(t)}{d(x)}&=\frac{d(t)}{d(-x_0)}\cdot \frac{d(-x_0)}{d(x)}\ge c(\varepsilon)^{-2}\exp\left(-\varepsilon\left|\int_x^{-x_0}\frac{d\xi}{d(\xi)}\right|\right)\\
&\ge c(\varepsilon)^2\exp\left(-\varepsilon\left|\int_x^{-x_0}\frac{d\xi}{d(\xi)}+\int_{-x_0}^t\frac{d\xi}{d(\xi)}\right|\right)=
 c(\varepsilon)^2\exp\left(-\varepsilon\left|\int_x^t\frac{d\xi)}{d(\xi)}\right|\right)\quad\Rightarrow\quad\eqref{4.8}.
 \end{align*}

\noindent \textit{Cases 1.3 and 3.1}.

Both cases are treated in the same way.
For instance, in Case 1.3 we have
\begin{align*}
\frac{d(t)}{d(x)}&=\frac{d(-x_0)}{d(x)}\cdot \frac{d(x_0)}{d(-x_0)}\cdot\frac{d(t)}{d(x_0)}\le \frac{M}{m}\exp\left(\varepsilon\left|\int_x^{-x_0}\frac{d\xi}{d(\xi)}\right|+\varepsilon
\left|\int_{x_0}^t\frac{d\xi}{d(\xi)}\right|
\right)\\
&\le c(\varepsilon)^2\exp\left[\varepsilon\left(\int_x^{-x_0}\frac{d\xi}{d(\xi)}+\int_{-x_0}^x\frac{d\xi}{d(\xi)} \right)+\int_{x_0}^t\frac{d\xi}{d(\xi)}\right]\\
&=
c(\varepsilon)^2\exp\left(\varepsilon\left|\int_x^t\frac{d\xi)}{d(\xi)}\right|\right);
\end{align*}
 \begin{align*}
 \frac{d(t)}{d(x)}&=\frac{d(-x_0)}{d(x)}\cdot \frac{d(x_0)}{d(-x_0)} \cdot\frac{d(t)}{d(x_0)}\ge \frac{m}{M}\exp\left(-\varepsilon\left|\int_x^{-x_0}\frac{d\xi}{d(\xi)}\right|-\varepsilon
 \left|\int_{x_0} ^t\frac{d\xi}{d\xi}\right|\right)\\
 &\ge c(\varepsilon)^{-2}\exp\left[-\varepsilon\left(\int_x^{-x_0}\frac{d\xi}{d(\xi)}+\int_{-x_0}^{x_0}\frac{d\xi}{d(\xi)}
 +\int_{x_0}^t\frac{d\xi}{d(\xi)}\right)\right ]\ge
 c(\varepsilon)^{-2}\exp\left(-\varepsilon\left|\int_x^t\frac{d\xi)}{d(\xi)}\right|\right) .
 \end{align*}

\noindent \textit{Case 2.2}.

We have
\begin{align*}
\frac{d(t)}{d(x)}&\le \frac{M(\varepsilon)}{m(\varepsilon)}\le c(\varepsilon)^2\exp\left(\varepsilon\left|\int_x^t\frac{d\xi}{d(\xi)}\right|\right);\\
\frac{d(t)}{d(x)}&\ge \frac{m(\varepsilon)}{M(\varepsilon)}\ge c(\varepsilon)^{-2}\exp\left(-\varepsilon\left|\int_x^t\frac{d\xi}{d(\xi)}\right|\right).\\
\end{align*}

\noindent \textit{Cases 2.3 and 3.2}.

Both cases are treated in the same way.
For instance, in Case 2.3 we have
\begin{align*}
\frac{d(t)}{d(x)}&=\frac{d(x_0)}{d(x)}\cdot\frac{d(t)}{d(x_0)}\le \frac{M(\varepsilon)}{m(\varepsilon)} \exp\left(\varepsilon\left|\int_{x_0}^t\frac{d\xi}{d(\xi)}\right|\right) \\
 &\le c(\varepsilon)^2\exp\left[\varepsilon\left(\int_x^{x_0}\frac{d\xi}{d(\xi)}+\int_{x_0}^t\frac{d\xi}{d(\xi)}\right)\right] = c(\varepsilon)^2\exp\left(\varepsilon\left|\int_x^t\frac{d\xi}{d(\xi)}\right|\right);
 \end{align*}
 \begin{align*}
  \frac{d(t)}{d(x)}&=\frac{d(x_0)}{d(x)}\cdot\frac{d(t)}{d(x_0)}\ge \frac{m(\varepsilon)}{M(\varepsilon)} \exp\left(-\varepsilon\left|\int_x^t\frac{d\xi}{d(\xi)}\right|\right) \\
  &\ge c(\varepsilon)^{-2}\exp\left[-\varepsilon\left(\int_x^{x_0}\frac{d\xi}{d(\xi)}+\int_{x_0}^t\frac{d\xi}{d(\xi)}\right)\right]=c
  (\varepsilon)^{-2}\exp\left(-\varepsilon\left|\int_x^t\frac{d\xi}{d(\xi)}\right|\right).
 \end{align*}

 \end{proof}

\begin{proof}[Proof of \thmref{thm4.7} for $p\in(1,\infty)$] \textit{Necessity}.

 We need some auxiliary assertions.

 \begin{lem}\label{lem5.3}
 Let $p\in[1,\infty)$, $p'=p(p-1)^{-1}.$ Denote
 \begin{align}
 M_p(x)&=\left(\int_{-\infty}^x(\mu(t)v(t))^pdt\right)^{1/p}\cdot\left(\int_x^\infty\left(\frac{u(t)}{\theta(t)}\right)^{1/p'}dt
 \right),\quad
 x\in\mathbb R,\label{5.34}\\
 \tilde M_p(x)&=\left(\int_{-\infty}^x\left(\frac{v(t)}{\theta(t)}\right)^{p'}dt\right)^{1/p'}\cdot\left( \int_x^\infty(\mu(t)u(t))
 ^pdt\right)^{1/p},\quad x\in\mathbb R.\label{5.35}
 \end{align}
 Then we have the equalities (see \eqref{2.8}):
 \begin{align}
 M_p(x)&=\left[\int_{-\infty}^x\left(\sqrt{\rho(t)}\mu(t)\right)^p\exp\left(-\frac{p}{2}\int_t^x\frac{d\xi}{\rho(\xi)}\right) dt\right]^{1/p}\nonumber\\
 &\quad\cdot\left[\int_x^\infty\left(\frac{\sqrt{\rho(t)}}{\theta(t)}\right)^{p'}\exp\left(-\frac{p'}{2}\int_x^t\frac{d\xi}{\rho(\xi)}\right) dt\right]^{1/p'},\quad x\in\mathbb R,\label{5.36}
 \end{align}
  \begin{align}
 \tilde M_p(x)&=\left[\int_{-\infty}^x\left(\frac{\sqrt{\rho(t)}}{\theta(t)}\right)^{p'}  \exp\left(-\frac{p'}{2}\int_t^x\frac{d\xi}{\rho(\xi)}\right) dt\right]^{1/p'}
 \nonumber\\
 &\quad\cdot\left[\int_x^\infty\left(\mu(t)\sqrt{\rho(t)} \right)^{p}\exp\left(-\frac{p}{2}\int_x^t\frac{d\xi}{\rho(\xi)}\right) dt\right]^{1/p},\quad x\in\mathbb R.\label{5.37}
 \end{align}
 \end{lem}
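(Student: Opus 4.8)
The plan is to prove \eqref{5.36} and \eqref{5.37} by substituting the Davies--Harrell representations \eqref{2.10} directly into the defining products \eqref{5.34} and \eqref{5.35}, and then using the additivity of the integral, $\int_{x_0}^{t}\frac{d\xi}{\rho(\xi)}=\int_{x_0}^{x}\frac{d\xi}{\rho(\xi)}+\int_{x}^{t}\frac{d\xi}{\rho(\xi)}$, to split off from each factor an exponential depending only on $x$. Since \eqref{5.35} is obtained from \eqref{5.34} by interchanging the roles of $u$ and $v$, I would establish \eqref{5.36} in detail and then read off \eqref{5.37} from the same computation.

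For \eqref{5.36} I first treat the factor $(\int_{-\infty}^{x}(\mu(t) v(t))^{p}\,dt)^{1/p}$. Substituting $v(t)=\sqrt{\rho(t)}\exp(\frac12\int_{x_0}^{t}\frac{d\xi}{\rho(\xi)})$ and writing $\int_{x_0}^{t}\frac{d\xi}{\rho(\xi)}=\int_{x_0}^{x}\frac{d\xi}{\rho(\xi)}-\int_{t}^{x}\frac{d\xi}{\rho(\xi)}$ (valid since $t<x$ on the domain of integration), the factor $\exp(\frac{p}{2}\int_{x_0}^{x}\frac{d\xi}{\rho(\xi)})$ is independent of $t$ and comes out of the integral; after taking the $p$-th root this leaves $\exp(\frac12\int_{x_0}^{x}\frac{d\xi}{\rho(\xi)})$ times the first bracket on the right of \eqref{5.36}, with the surviving exponent $-\frac{p}{2}\int_{t}^{x}\frac{d\xi}{\rho(\xi)}$ inside. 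Likewise, in the factor $(\int_{x}^{\infty}(u(t)/\theta(t))^{p'}\,dt)^{1/p'}$ I substitute $u(t)=\sqrt{\rho(t)}\exp(-\frac12\int_{x_0}^{t}\frac{d\xi}{\rho(\xi)})$ and use $\int_{x_0}^{t}\frac{d\xi}{\rho(\xi)}=\int_{x_0}^{x}\frac{d\xi}{\rho(\xi)}+\int_{x}^{t}\frac{d\xi}{\rho(\xi)}$ (now $t>x$), pulling out $\exp(-\frac{p'}{2}\int_{x_0}^{x}\frac{d\xi}{\rho(\xi)})$; after the $p'$-th root this yields $\exp(-\frac12\int_{x_0}^{x}\frac{d\xi}{\rho(\xi)})$ times the second bracket of \eqref{5.36}.

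The one point to check with care is that the two extracted $x$-prefactors are mutually reciprocal: the first factor contributes $\exp(+\frac12\int_{x_0}^{x}\frac{d\xi}{\rho(\xi)})$ and the second $\exp(-\frac12\int_{x_0}^{x}\frac{d\xi}{\rho(\xi)})$, so their product equals $1$ for every $x\in\mathbb R$ and we are left with exactly the right-hand side of \eqref{5.36}. Repeating the computation for \eqref{5.35} --- substituting $v$ in the left factor and $u$ in the right factor, and splitting $\int_{x_0}^{t}=\int_{x_0}^{x}-\int_{t}^{x}$ on $t<x$ and $\int_{x_0}^{t}=\int_{x_0}^{x}+\int_{x}^{t}$ on $t>x$ --- again produces the reciprocal prefactors $\exp(\pm\frac12\int_{x_0}^{x}\frac{d\xi}{\rho(\xi)})$, which cancel and give \eqref{5.37}. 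There is no genuine analytic obstacle here: the entire content is the bookkeeping of the two signs in the splitting of $\int_{x_0}^{t}\frac{d\xi}{\rho(\xi)}$, arranged so that the orientation of the surviving inner integral matches the exponents displayed in \eqref{5.36}--\eqref{5.37}.
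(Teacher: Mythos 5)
Your proposal is correct and follows essentially the same route as the paper: substitute the Davies--Harrell formulas \eqref{2.10} into \eqref{5.34}--\eqref{5.35}, split $\int_{x_0}^t\frac{d\xi}{\rho(\xi)}$ at $x$, pull the $x$-dependent exponentials out of each bracket, and observe that after taking the $1/p$ and $1/p'$ roots these prefactors are $\exp\left(\pm\frac12\int_{x_0}^x\frac{d\xi}{\rho(\xi)}\right)$ and cancel. The paper likewise proves \eqref{5.36} in detail and obtains \eqref{5.37} by the symmetric computation, so there is nothing to add.
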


\renewcommand{\qedsymbol}{\openbox}
 \begin{proof}
 Equalities \eqref{5.36} and \eqref{5.37} are proved in the same way. Consider, say, \eqref{5.36}. This equality can be obtained by substituting formulas \eqref{2.10} in \eqref{5.34}:
  \begin{align*}
 M_p(x)&=\left[\int_{-\infty}^x\left(\mu(t)\sqrt{\rho(t)}\right)^p\exp\left(\frac{p}{2}\int_{x_0}^t\frac{d\xi}{\rho(\xi)}
 \right) dt\right]^{1/p}\\
 &\quad\cdot\left[\int_x^\infty\left(\frac{\sqrt{\rho(t)}}{\theta(t)}\right) ^{p'} \exp\left(-\frac{p'}{2}\int_{x_0}^t\frac{d\xi}{\rho(\xi)}\right) dt\right]^{1/p'} \\
 &=\left[\int_{-\infty}^x\left(\mu(t)\sqrt{\rho(t)}\right)^p\exp\left(-\frac{p}{2}\int^{x}_t\frac{d\xi}{\rho(\xi)}
 \right) \cdot\exp\left(\frac{p}{2}\int_{x_0}^x\frac{d\xi}{\rho(\xi)}\right)dt\right]^{1/p}\\
  &\quad\cdot\left[\int_x^\infty\left(\frac{\sqrt{\rho(t)}}{\theta(t)}\right) ^{p'} \exp\left(-\frac{p'}{2}\int_x^t\frac{d\xi}{\rho(\xi)}\right)\cdot\exp\left(-\frac{p'}{2}\int_{x_0}^x\frac{d\xi}{\rho(\xi)}
  \right) dt\right]^{1/p'}\\
  &=\left[\int_{-\infty}^x\left(\mu(t)\sqrt{\rho(t)}\right)^p\exp\left(-\frac{p}{2}\int_{t}^x\frac{d\xi}{\rho(\xi)}
 \right) dt\right]^{1/p}\\
 &\quad\cdot\left[
 \int_x^\infty \left(\frac{\sqrt{\rho(t)}}{\theta(t)}\right) ^{p'}\exp\left(-\frac{p'}{2}
 \int_{x}^t\frac{d\xi}{\rho(\xi)} \right)dt  \right]^{1/p'}.
 \end{align*}
\end{proof}

Let us introduce some more notation:
\begin{equation}\label{5.38}
\varphi(x,t)=\begin{cases}
\displaystyle\frac{\mu(x)v(x)}{\mu(t)v(t)},&\quad\text{if}\quad x\le t\\ \\
\displaystyle\frac{\mu(t)v(t)}{\mu(x)v(x)},&\quad\text{if}\quad x\ge t\end{cases},\qquad
\psi(x,t)=\begin{cases}
\displaystyle\frac{\theta(x)u(t)}{\theta(t)u(x)},&\quad\text{if}\quad x\le t\\ \\
\displaystyle\frac{\theta(t)u(x)}{\theta(x)u(t)},&\quad\text{if}\quad x\ge t\end{cases}.
 \end{equation}

\begin{lem}\label{lem5.4}
Under the hypotheses of the theorem, for a given $\varepsilon>0$ and for all $t,x\in\mathbb R,$ we have the inequality
\begin{equation}\label{5.39}
\max\{\varphi(x,t);\psi(x,t)\}\le c(\varepsilon)\exp\left(\left(\sqrt 2\varepsilon-\frac{1}{2}\right)\left|\int_x^t\frac{d\xi}{\rho(\xi)}\right|\right).
\end{equation}
\end{lem}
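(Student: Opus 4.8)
The plan is to put $\varphi$ and $\psi$ on an equal footing by first extracting from each the ``diagonal'' factor $\exp\!\left(-\tfrac12\left|\int_x^t\tfrac{d\xi}{\rho(\xi)}\right|\right)$ supplied by the Davies--Harrell representation \eqref{2.10}, and then showing that the surviving prefactor grows no faster than $c(\varepsilon)\exp\!\left(\sqrt2\,\varepsilon\left|\int_x^t\tfrac{d\xi}{\rho(\xi)}\right|\right)$; the two exponents then add up to precisely the exponent $\sqrt2\varepsilon-\tfrac12$ of \eqref{5.39}. Concretely, substituting \eqref{2.10} into \eqref{5.38} and using $\rho=uv$, one finds for $x\le t$ that $\varphi(x,t)=\tfrac{\mu(x)}{\mu(t)}\sqrt{\tfrac{\rho(x)}{\rho(t)}}\exp\!\left(-\tfrac12\left|\int_x^t\tfrac{d\xi}{\rho(\xi)}\right|\right)$ and $\psi(x,t)=\tfrac{\theta(x)}{\theta(t)}\sqrt{\tfrac{\rho(t)}{\rho(x)}}\exp\!\left(-\tfrac12\left|\int_x^t\tfrac{d\xi}{\rho(\xi)}\right|\right)$, with the analogous identities, obtained by interchanging $x$ and $t$, holding in the range $x\ge t$. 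In every branch the single surviving exponent carries the sign $-\tfrac12$ precisely because the two cases of \eqref{5.38} are chosen so that $\int_x^t\tfrac{d\xi}{\rho(\xi)}$ is picked up with the correct orientation.

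The prefactor is then estimated by two applications of the Otelbaev inequalities \eqref{2.12} (\thmref{thm2.4}) together with the agreement hypothesis. First, \eqref{2.12} gives $\sqrt{\rho(a)/\rho(b)}\le 2\sqrt{d(a)/d(b)}$, which converts the square root of a $\rho$-ratio into the corresponding $d$-ratio at the cost of an absolute constant. Second, the hypothesis that $\{\mu,\theta\}$ agrees with $q$ (Definition~\ref{defn4.6}, inequality \eqref{4.10}) bounds both $\sqrt{d(t)/d(x)}\,\mu(t)/\mu(x)$ and $\sqrt{d(t)/d(x)}\,\theta(x)/\theta(t)$, and hence also their reciprocals and their $x\leftrightarrow t$ transposes (the bound is unchanged under transposition since $\left|\int_x^t\right|$ is symmetric), by $c(\varepsilon)\exp\!\left(\varepsilon\left|\int_x^t\tfrac{d\xi}{d(\xi)}\right|\right)$. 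Combining these two facts, the prefactor of each of $\varphi,\psi$ in each of the four cases is at most $2\,c(\varepsilon)\exp\!\left(\varepsilon\left|\int_x^t\tfrac{d\xi}{d(\xi)}\right|\right)$.

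Finally, I would convert the exponent from the $d$-integral to the $\rho$-integral using \eqref{2.12} once more: since $\rho(\xi)\le\sqrt2\,d(\xi)$ we have $d(\xi)^{-1}\le\sqrt2\,\rho(\xi)^{-1}$, whence $\left|\int_x^t\tfrac{d\xi}{d(\xi)}\right|\le\sqrt2\left|\int_x^t\tfrac{d\xi}{\rho(\xi)}\right|$ and therefore $\exp\!\left(\varepsilon\left|\int_x^t\tfrac{d\xi}{d(\xi)}\right|\right)\le\exp\!\left(\sqrt2\,\varepsilon\left|\int_x^t\tfrac{d\xi}{\rho(\xi)}\right|\right)$. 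Multiplying the prefactor bound by the diagonal factor and absorbing the constant $2$ into $c(\varepsilon)$ yields \eqref{5.39} for both $\varphi$ and $\psi$, and passing to the maximum finishes the proof. The only genuine obstacle here is bookkeeping: one must carry out the reduction separately in the regions $x\le t$ and $x\ge t$ for each of $\varphi$ and $\psi$ and, in each branch, match the orientation of $\int_x^t$ so that the diagonal exponent is $-\tfrac12$ and invoke \eqref{4.10} in the correctly reciprocated or transposed form. Once this is organized, the four computations are identical in structure, and the substance of the argument is simply that the factor $\sqrt2$ arising from the Otelbaev comparison \eqref{2.12} is what degrades the clean exponent $-\tfrac12$ into $\sqrt2\varepsilon-\tfrac12$.
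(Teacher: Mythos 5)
Your proposal is correct and follows essentially the same route as the paper's proof: the Davies--Harrell representation \eqref{2.10} to produce the factor $\exp\left(-\frac{1}{2}\left|\int_x^t\frac{d\xi}{\rho(\xi)}\right|\right)$, the Otelbaev inequalities \eqref{2.12} to pass between $\rho$-ratios and $d$-ratios, the agreement hypothesis \eqref{4.10} to bound the resulting weighted $d$-ratio, and finally $d(\xi)^{-1}\le\sqrt 2\,\rho(\xi)^{-1}$ to convert the $d$-integral in the exponent into the $\rho$-integral, yielding the exponent $\sqrt 2\varepsilon-\frac{1}{2}$. The paper simply carries this out for $\varphi$ in the two ranges $x\ge t$ and $x\le t$ and declares the case of $\psi$ similar, whereas you organize the same computation through explicit identities for all four branches; the mathematical content is identical.
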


\begin{proof} We will check inequality \eqref{5.39} for the function $\varphi$ (for the function $\psi$ the proof of \eqref{5.39} is similar). Below we use
\eqref{2.10}, \eqref{2.12} and \eqref{4.10}. Let $x\ge t.$ Then
\begin{align*}
\frac{\mu(t)}{\mu(x)}\cdot\frac{v(t)}{v(x)}&=\frac{\mu(t)}{\mu(x)}\cdot\sqrt{\frac{\rho(t)}{\rho(x)}}\exp\left(-\frac{1}{2}\int_t^x\frac{d\xi}{\rho(\xi)}
\right)\le c\frac{\mu(t)}{\mu(x)}\cdot\sqrt{\frac{d(t)}{d(x)}}\exp\left(-\frac{1}{2}
\int_t^x\frac{d\xi}{\rho(\xi)}\right)\\
&\le c(\varepsilon)\exp\left(\varepsilon\int_t^x\frac{d\xi}{d(\xi)}-\frac{1}{2}\int_t^x\frac{d\xi}{\rho(\xi)}\right)\le
c(\varepsilon)\exp\left(\left(\sqrt 2\varepsilon-\frac{1}{2}\right)\int_t^x\frac{d\xi}{\rho(\xi)}\right)\\
&=c(\varepsilon)\exp\left(\left(\sqrt 2\varepsilon-\frac{1}{2}\right)\left|\int_x^t\frac{d\xi}{\rho(\xi)}\right|\right);
\end{align*}

 Similarly, for $x\le t,$ we have:
    \begin{align*}
    \frac{\mu(x)}{\mu(t)}\cdot\frac{v(x)}{v(t)}&=\frac{\mu(x)}{\mu(t)}\sqrt{\frac{\rho(x)}{\rho(t)}}\exp\left(-\frac{1}{2}
    \int_x^t\frac{d\xi}{\rho(\xi)}\right)\le c\frac{\mu(x)}{\mu(t)}\sqrt{\frac{d(x)}{d(t)}}\exp\left(-\frac{1}{2}\int_x^t\frac{d\xi}{\rho(\xi)}\right)\\
   &\le c(\varepsilon)\exp\left(\varepsilon\int_t^x\frac{d\xi}{d(\xi)}-\frac{1}{2}
   \int_t^x\frac{d\xi}{\rho(\xi)}\right)\le
   c(\varepsilon)\exp\left(\left(\sqrt 2\varepsilon-\frac{1}{2}\right) \int_t^x\frac{d\xi}{\rho(\xi)} \right)\\
   &=c(\varepsilon)\exp\left(\left(\sqrt 2\varepsilon-\frac{1}{2}\right)\left|\int_x^t\frac{d\xi}{\rho(\xi)}\right|\right).
    \end{align*}
\end{proof}

\begin{lem}\label{lem5.5}
Under conditions \eqref{1.1} and \eqref{2.1}, we have the inequality
\begin{equation}\label{5.40}
\int_{x-d(x)}^{x+d(x)}\frac{d\xi}{d(\xi)}\le 8,\quad\forall\ x\in\mathbb R.
\end{equation}
\end{lem}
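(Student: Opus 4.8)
The plan is to derive \eqref{5.40} as an immediate consequence of the local comparison estimate for $d$ established in \lemref{lem3.3}. The key observation is that the variable of integration $\xi$ ranges over the interval $[x-d(x),\,x+d(x)]$, so every such $\xi$ satisfies $|\xi-x|\le d(x)$, which is precisely the hypothesis under which inequality \eqref{3.2} is valid.

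First I would invoke \lemref{lem3.3}: for each $\xi$ with $|\xi-x|\le d(x)$ the lower bound $d(\xi)\ge 4^{-1}d(x)$ holds, and taking reciprocals gives the pointwise estimate $d(\xi)^{-1}\le 4\,d(x)^{-1}$ on the whole interval of integration. Note that by \lemref{lem3.1} the function $d$ is continuous and positive, so the integrand is well-defined and the integral is finite.

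Next I would integrate this pointwise bound over $[x-d(x),\,x+d(x)]$, whose length is exactly $2d(x)$:
\[
\int_{x-d(x)}^{x+d(x)}\frac{d\xi}{d(\xi)}\le \frac{4}{d(x)}\int_{x-d(x)}^{x+d(x)}d\xi=\frac{4}{d(x)}\cdot 2d(x)=8.
\]
Since $x\in\mathbb R$ is arbitrary, this yields \eqref{5.40} for all $x$.

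I do not anticipate any genuine obstacle: the entire argument is a one-line consequence of the lower bound in \eqref{3.2} together with the fact that the interval has length $2d(x)$. The only point requiring a moment's care is confirming that $\xi$ indeed satisfies $|\xi-x|\le d(x)$ throughout the interval, which is true by the very choice of the limits of integration, so \lemref{lem3.3} applies uniformly.
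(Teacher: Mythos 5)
Your proof is correct and is essentially identical to the paper's own argument: both use the lower bound $d(\xi)\ge 4^{-1}d(x)$ from \eqref{3.2} (valid precisely because $|\xi-x|\le d(x)$ on the interval of integration) and integrate the resulting pointwise bound $d(\xi)^{-1}\le 4\,d(x)^{-1}$ over the interval of length $2d(x)$ to get $8$.
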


\begin{proof}
Estimate \eqref{5.40} follows from \eqref{3.2}:
$$\int_{x-d(x)}^{x+d(x)}\frac{d\xi}{d(\xi)}=\int_{x-d(x)}^{x+d(x)}\frac{d(x)}{d(\xi)}\cdot\frac{d\xi}{d(x)}\le\int_{x-d(x)} ^{x+d(x)}4\frac{d\xi}{d(x)}=8.
$$
\end{proof}

\begin{lem}\label{lem5.6}
Under the hypotheses of the theorem,   we have the inequalities
\begin{equation}\label{5.41}
 c^{-1}\le\frac{\mu(t)}{\mu(x)},\quad \frac{\theta(t)}{\theta(x)}\le c;\quad\text{if}\quad t\in[x-d(x),x+d(x)],\quad x\in\mathbb R.
\end{equation}
\end{lem}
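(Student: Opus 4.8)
The plan is to obtain \eqref{5.41} directly from the agreement hypothesis \eqref{4.10} by fixing one convenient value of $\varepsilon$ and then controlling the two auxiliary quantities $d(t)/d(x)$ and $\left|\int_x^t\frac{d\xi}{d(\xi)}\right|$ appearing there by means of the restriction $t\in[x-d(x),x+d(x)]$. First I would set $\varepsilon=1$ in Definition \ref{defn4.6} to produce a constant $c(1)\in[1,\infty)$ for which, for all $t,x\in\mathbb R$,
\begin{equation*}
c(1)^{-1}\exp\left(-\left|\int_x^t\frac{d\xi}{d(\xi)}\right|\right)\le\sqrt{\frac{d(t)}{d(x)}}\,\frac{\mu(t)}{\mu(x)};\quad \sqrt{\frac{d(t)}{d(x)}}\,\frac{\theta(x)}{\theta(t)}\le c(1)\exp\left(\left|\int_x^t\frac{d\xi}{d(\xi)}\right|\right),
\end{equation*}
where, as in \lemref{lem3.4}, each quantity separated by the semicolon is understood to be squeezed between the two exponential bounds.

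Next, for $t\in[x-d(x),x+d(x)]$, that is $|t-x|\le d(x)$, I would invoke two earlier estimates. By \lemref{lem3.3} one has $1/4\le d(t)/d(x)\le 4$, hence $1/2\le\sqrt{d(t)/d(x)}\le 2$; by \lemref{lem5.5} one has $\left|\int_x^t\frac{d\xi}{d(\xi)}\right|\le\int_{x-d(x)}^{x+d(x)}\frac{d\xi}{d(\xi)}\le 8$, so that $\exp\left(\pm\left|\int_x^t\frac{d\xi}{d(\xi)}\right|\right)$ lies between $e^{-8}$ and $e^{8}$. Substituting these bounds into the displayed inequalities and dividing by the factor $\sqrt{d(t)/d(x)}$ isolates each ratio between absolute constants; for example one obtains $\frac12 c(1)^{-1}e^{-8}\le\mu(t)/\mu(x)\le 2c(1)e^{8}$, and the bounds on $\theta(x)/\theta(t)$ invert to matching two-sided control of $\theta(t)/\theta(x)$. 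Taking $c=2c(1)e^{8}$ then yields \eqref{5.41}.

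I do not expect a genuine obstacle here: the lemma merely specializes the global agreement estimate \eqref{4.10} to the local scale $d(x)$, at which both the ratio $d(t)/d(x)$ and the integral $\int_x^t d\xi/d(\xi)$ are controlled by absolute constants. The only point demanding a little care is the bookkeeping within \eqref{4.10}---one inequality bounds $\sqrt{d(t)/d(x)}\,\mu(t)/\mu(x)$ from both sides while the other bounds $\sqrt{d(t)/d(x)}\,\theta(x)/\theta(t)$---and keeping straight which direction of each bound survives after one divides out the harmless factor $\sqrt{d(t)/d(x)}$.
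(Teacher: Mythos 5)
Your proof is correct and follows essentially the same route as the paper's own: the paper likewise fixes a specific $\varepsilon$ in \eqref{4.10} (it uses $\varepsilon=\tfrac12$ rather than $1$), bounds the ratio $d(t)/d(x)$ via \lemref{lem3.3} and the integral $\left|\int_x^t\frac{d\xi}{d(\xi)}\right|$ via \lemref{lem5.5}, and then isolates $\mu(t)/\mu(x)$ (treating $\theta$ symmetrically). The only differences are cosmetic choices of $\varepsilon$ and of which side the factor $\sqrt{d(t)/d(x)}$ is moved to.
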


\begin{proof}
We will only check inequalities \eqref{5.41} for the function $\mu$ (the proof of \eqref{5.41} for the function $\theta$ is similar). In \eqref{4.10}, set $\varepsilon=\frac{1}{2}.$ Now for $|t-x|\le d(x),$ $x\in\mathbb R$, we use \eqref{3.2}, \eqref{4.10} and \eqref{5.40}:
\begin{align*}
\frac{\mu(t)}{\mu(x)}&\le c\sqrt{\frac{d(x)}{d(t)}}\exp\left(\frac{1}{2}\left|\int_x^t\frac{d\xi}{d(\xi)}\right|\right) \le c\exp\left(\frac{1}{2}\int_{x-d(x)}^{x+d(x)}\frac{d\xi}{d(\xi)}\right)\le c<\infty,\\
 \frac{\mu(t)}{\mu(x)}&\ge c^{-1}\sqrt{\frac{d(x)}{d(t)}}\exp\left(-\frac{1}{2}\left|\int_x^t\frac{d\xi}{d(\xi)}\right|\right) \ge c^{-1}\exp\left(-\frac{1}{2}\int_{x-d(x)}^{x+d(x)}\frac{d\xi}{d(\xi)}\right)\le c^{-1}>0.
\end{align*}
\end{proof}

Let us now go over to the theorem. Since condition \eqref{2.1} holds, by \thmref{thm2.2}, a FSS $\{u,v\}$ of equation \eqref{2.4} is defined, and thus the operator $S$ (see \eqref{4.3}) is also defined. Since the pair $\{L_{p,\mu};L_{p,\theta}\}$ is admissible for \eqref{1.1}, by \thmref{thm4.3} the operator $S:L_p\to L_p,$ $p\in[1,\infty)$ is bounded. Then so are the operators $S_i: L_p\to L_p$, $i=1,2$ (see \eqref{5.25}). Let $p\in(1,\infty).$ Consider, say, the operator $S_2: L_p\to L_p.$ Since it is bounded, we have $M_p<\infty$ by \thmref{thm2.7} (see \eqref{5.27} and \eqref{5.34}. Below we use this fact together with \lemref{lem2.1}, \eqref{2.12}, \eqref{5.40} and \eqref{5.41}:
\begin{align*}
\infty&>M_p=\sup_{x\in\mathbb R} M_p(x)=\sup_{x\in\mathbb R}\left(\int_{-\infty}^x(\mu(t)v(t))^pdt\right)^{1/p}\left( \int_x^\infty\left(\frac{u(t)}{\theta(t)}\right)^{p'}dt\right)^{1/p'}\\
&=\sup_{x\in\mathbb R}\left[\int_{-\infty}^x\left(\sqrt{\rho(t)\mu(t)}\right)^p\exp\left(-\frac{p}{2}\int_t^x\frac{d\xi} {\rho(\xi)}\right)dt\right]^{1/p}\\
&\quad  \cdot\left[\int_x^\infty\left(\frac{\sqrt{\rho(t)}}{\theta(t)}\right)^{p'}
  \exp\left(- \frac{p'}{2}\int_x^t\frac{d\xi}{\rho(\xi)}
  \right)
  dt\right]^{1/p'}\\
   &\ge\sup_{x\in\mathbb R}\left[\int_{x-d(x)}^x\left(\sqrt{\rho(t)}\mu(t)\right)^p\exp\left(-\frac{p}{2}\int_t^x\frac{d\xi} {\rho(\xi)}\right)dt\right]^{1/p}\\
 &\quad \cdot\left[\int_x^{x+d(x)}\left(\frac{\sqrt{\rho(t)}}{\theta(t)}\right)^{p'}\exp\left(-\frac{p'}{2} \int_x^t\frac{d\xi}{\rho(\xi)}\right)dt\right]^{1/p'}\\
 &\ge c^{-1}\sup_{x\in\mathbb R}\left[\int_{x-d(x)}^x\left(\sqrt{d(t)}\mu(t)\right)^p\exp\left(-\sqrt 2p\int_t^x\frac{d\xi}{d(\xi)}\right)dt\right]^{1/p}\\
 &\quad \cdot\left[\int_x^{x+d(x)}\left(\frac{\sqrt{d(t)}}{\theta(t)}\right)^{p'} \exp\left(-\sqrt 2p'\int_x^t\frac{d\xi}{d(\xi)}\right)dt\right]^{1/p'}\\
 &\ge c^{-1}\sup_{x\in\mathbb R}\left[\int_{x-d(x)}^x\left(\sqrt{d(t)}\mu(t)\right)^p\exp\left(-\sqrt{2p}\int_{x-d(x)}^{x+d(x)}\frac{d\xi}{d(\xi)}\right) dt\right]^{1/p}\\
 &\quad \cdot\left[\int_x^{x+d(x)}\left(\frac{\sqrt{d(t)}}{\theta(t)}\right)^{p'}\exp\left(-\sqrt2p'\int_{x-d(x)}^{x+d(x)} \frac{d\xi}{d(\xi)}\right)dt\right]^{1/p'}\\
 &\ge c^{-1}\sup_{x\in\mathbb R}\frac{\mu(x)}{\theta(x)}d^2(x)=c^{_1} m(q,\mu,\theta),
\end{align*}
as required. Let $p=1.$ Since the operator $S:L_1\to L_1$ is bounded (\thmref{thm4.3}), so are the operators $S_i:L_1\to L_1, $ $i=1,2$ (see \lemref{lem5.2}). Let, say, $i=2.$ Below we use \thmref{thm2.9}, \eqref{5.22}, \lemref{lem2.1}, \eqref{2.10}, \eqref{2.12}, \eqref{5.40}, \eqref{5.41} and \eqref{3.2}:
\begin{align*}
\infty&>\|S_2\|_{1\to1}=\sup_{x\in\mathbb R}\frac{u(x)}{\theta(x)}\int_{-\infty}^x\mu(t)v(t)dt\ge\sup_{x\in\mathbb R}\frac{u(x)}{\theta(x)}\int_{x-d(x)}^x\mu(t)v(t)dt\\
&=\sup_{x\in\mathbb R}\frac{\sqrt{\rho(x)}}{\theta(x)}\int_{x-d(x)}^x\mu(t)]\sqrt{\rho(t)}\exp\left(-\frac{1}{2}\int_t^x\frac{d\xi}{\rho(\xi)}\right)\\
&\ge c^{-1}\sup_{x\in\mathbb R}\frac{\sqrt{d(x)}}{\theta(x)}\int_{x-d(x)}^x\mu(t)\sqrt{d(t)}\exp\left(-\sqrt2\int_t^x\frac{d\xi}{d(\xi)}\right)dt\\
&\ge c^{-1}\sup_{x\in\mathbb R}\frac{\sqrt{d(x)}}{\theta(x)}\int_{x-d(x)}^x\mu(t)\sqrt{d(t)}\exp\left(-\sqrt2\int_{x-d(x)}^{x+d(x)}\frac{d\xi}{d(\xi)}\right)dt\\
&\ge c^{-1}\sup_{x\in\mathbb R}\frac{\sqrt{d(x)}}{\theta(x)}\int_{x-d(x)}^x\mu(t)\sqrt{d(t)}dt\ge c^{-1}\sup_{x\in\mathbb R}\frac{\mu(x)}{\theta(x)}d^2(x)=c^{-1}m(q,\mu,\theta).
\end{align*}
\renewcommand{\qedsymbol}{}

\begin{proof}[Proof of \thmref{thm4.7}] \textit{Sufficiency.}

It is enough to show that the operators $S_i: L_p\to L_p,$ $p\in[1,\infty),$ $i=1,2,$ are bounded. Indeed, then so is the operator $S:L_p\to L_p,$ $p\in[1,\infty)$ (see \eqref{5.25}), and then by \thmref{thm4.3} the pair $\{L_{p,\mu};\, L_{p,\theta}$ is admissible for \eqref{1.1}. Both operators $S_i,$ $i=1,2,$ are treated in the same way, and therefore below we only consider the operator $S_2$ (see \eqref{4.5}, \eqref{5.22}). Below, when estimating $\|S_2\|_{p\to p},$ $p\in(1,\infty)$, we use \thmref{thm2.7}, \eqref{5.22}, \eqref{5.34}, \eqref{5.38}, \eqref{5.39}, \eqref{4.10} for $\varepsilon=1/4\sqrt2,$ \eqref{2.12} and \eqref{4.11}:

\begin{align*}
&\|S_2\|_{p\to p} \le c(p)\sup_{x\in\mathbb R}\left[\int_{-\infty}^x(\mu(t)v(t))^pdt\right]^{1/p}\cdot\left[\int_x^\infty\left(\frac{u(t)}{\theta(t)}\right)^{p'}dt\right] ^{1/p'}\\
&=c(\varepsilon)\sup_{x\in\mathbb R}(\mu(x)v(x))^{1/p'}\left[\int_{-\infty}^x\left(\frac{\mu(t)v(t)}{\mu(x)v(x)}\right)^{p-1}(\mu(t)v(t))dt\right]^{1/p}\\
&\quad\cdot\left(\frac{u(x)}{\theta(x)}\right)^{1/p}\left[\int_x^\infty\left(\frac{u(t)}{\theta(t)}\cdot \frac{\theta(x)}{u(x)}\right)^{p'-1}\left(\frac{u(t)}{\theta(t)}\right)dt\right]^{1/p'}\\
&=c(\varepsilon)\sup_{x\in\mathbb R}\left[\frac{u(x)}{\theta(x)}\int_{-\infty}^x\varphi(x,t)^{p-1}(\mu(t)v(t))dt\right]^{1/p}\\
&\quad \cdot\left[\mu(x)v(x)\int_x^\infty\psi(x,t)^{p'-1}\left(\frac{u(t)}{\theta(t)}\right) dt\right]^{1/p'}\\
&\le c(\varepsilon)\sup_{x\in\mathbb R}\left[\int_{-\infty}^x\left(\frac{u(x)}{\theta(x)}\cdot\frac{\theta(t)}{u(t)}\right)\cdot\varphi(x,t)^{p-1}\frac{ \mu(t)}{\theta(t)}\rho(t)dt\right]^{1/p}\\
&\quad \cdot\left[\int_x^\infty\left(\frac{\mu(x)v(x)}{\mu(t)v(t)}\right)\cdot\psi(x,t)^{p'-1}\cdot\frac{\mu(t)}{\theta(t)}\rho(t)dt
\right]^{1/p'}\\
&=c(\varepsilon)\sup_{x\in\mathbb R}\left[\int_{-\infty}^x\psi(x,t)\cdot\varphi(x,t)^{p-1}\frac{\mu(t)}{\theta(t)}\rho(t)dt\right]^{1/p}\\
&\quad\cdot \left[\int_x^\infty\varphi(x,t)\psi(x,t)^{p'-1}\frac{\mu(t)}{\theta(t)}\rho(t)dt\right]^{1/p'}\\
&\le c(\varepsilon)\sup_{x\in\mathbb R}
\left[
\int_{-\infty}^x\left(\frac{\mu(t)}{\theta(t)}d^2(t)\right)\cdot\left(\frac{\rho(t)}{d(t)}
\right)^2
  \cdot\frac{1}{\rho(t)}
  \exp\left(\left(\sqrt 2\varepsilon-\frac{1}{2}\right)
  p\int_t^x\frac{d\xi}{\rho(\xi)}\right)dt
 \right]^{1/p}\\
&\quad\cdot\left[\int_x^\infty\left(\frac{\mu(t)}{\theta(t)}d^2(t)\right)\cdot\left(\frac{\rho(t)}{d(t)}\right)^2
\frac{1}{\rho(t)}
\exp\left(\left(\sqrt2\varepsilon-\frac{1}{2}\right){p'}\int_x^t\frac{d\xi}{\rho(\xi)}
\right)dt\right]^{1/p'}\\
&\le c(\varepsilon)m(q,\mu,\theta)\sup_{x\in\mathbb R}\left[\int_{-\infty}^x\frac{1}{\rho(t)}\exp\left(-\frac{p}{4}\int_t^x
\frac{d\xi}{\rho(\xi)}\right)dt\right]^{1/p}\\
&\quad\cdot\left[\int_x^\infty\frac{1}{\rho(t)}\exp\left(-\frac{p'}{4}\int_x^t\frac{d\xi}{\rho(\xi)}\right)dt\right]^{1/p'} \le cm(q,\mu,\theta)<\infty.
\end{align*}

Consider the case $p=1.$ Below, when estimating $\|S\|_{1\to1}, $ we use \eqref{2.21}, \eqref{4.3}, \eqref{2.11} and \eqref{4.10} for $\varepsilon=1/4\sqrt2$, and \eqref{2.12}:
\begin{align*}
 \|S\|_{1\to1}&=\sup_{x\in\mathbb R}\frac{1}{\theta(x)}\int_{-\infty}^\infty\mu(t)G(x,t)dt=\sup_{x\in\mathbb R}\frac{\sqrt {\rho(x)}}{\theta(x)}\int_{-\infty}^\infty\mu(t)\sqrt{\rho(t)}\exp\left(-\frac{1}{2}\left|\int_x^t\frac{d\xi}{\rho(\xi)}\right|
 \right)dt\\
 &=\sup_{x\in\mathbb R}\int_{-\infty}^\infty\left(\frac{\mu(t)}{\theta(t)}d^2(t)\right)\cdot\left(\frac{\rho(t)}
 {d(t)}\right)^2
 \frac{\theta(t)\sqrt{d(x)}}
{\theta(x)\sqrt{d(t)}}\cdot
\sqrt{\frac{ \rho(x)}{d(x) } \cdot\frac{d(t)}{\rho(t)}}\cdot\frac{1}{\rho(t)}
 \exp\left(-\frac{1}{2}\left|\int_x^t\frac{d\xi}{\rho(\xi)}\right| \right)dt\\
&\le cm(q,\mu,\theta)\int_{-\infty}^\infty\frac{1}{\rho(t)}\exp\left(\varepsilon
\left|\int_x^t\frac{d\xi}{d(\xi)}\right|-\frac{1}{2}\left|
\int_x^t\frac{d\xi}{\rho(\xi)}\right|\right)dt\\
&\le cm(q,\mu,\theta)\int_{-\infty}^\infty\frac{1}{\rho(t)}\exp\left(\left(\sqrt 2\varepsilon-\frac{1}{2}\right)\left|\int_x^t\frac{d\xi}{\rho(\xi)}\right|\right)dt\\
&=cm(q,\mu,\theta)\int_{-\infty}^\infty\frac{1}{\rho(t)} \exp\left(-\frac{1}{4}\left|\int_x^t\frac{d\xi}{\rho(\xi)}\right|\right)dt=
cm(q,r,\mu)<\infty.
\end{align*}
Thus the operator $S:L_p\to L_p,$ $p\in[1,\infty)$ is bounded, and it remains to refer to \thmref{thm4.3}.

 \end{proof}
 \end{proof}

\renewcommand{\qedsymbol}{\openbox}

\begin{proof}[Proof of \lemref{lem4.8}]

Fix $\varepsilon>0$ and choose $x_0=x_0(\varepsilon)>>1$ in order to have the inequalities
\begin{equation}\label{5.42}
-\frac{\varepsilon}{3}\le\frac{\mu'(\xi)}{\mu(\xi)}d(\xi);\quad d'(\xi)\le\frac{\varepsilon}{3}\quad\text{for all}\quad |\xi|\ge x_0.
\end{equation}
{}From \eqref{5.42}, one can easily deduce the estimates
\begin{equation}\label{5.43}
-\frac{2}{3}\le\frac{(\mu(\xi)d(\xi))'}{\mu(\xi)d(\xi)}\le \frac{2\varepsilon}{3}\cdot \frac{1}{d(\xi)}\quad\text{for all}\quad |\xi|\ge x_0.
\end{equation}
Let, say, $t\ge x\ge x_0.$ Then from \eqref{5.43}, we obtain
\begin{equation}\label{5.44}
\exp\left(-\frac{2\varepsilon}{3}\int_x^t\frac{d\xi}{d(\xi)}\right)\le\frac{\mu(t)d(t)}{\mu(x)d(x)}\le\exp\left(\frac{2\varepsilon}
{3}\int_x^t\frac{d\xi}{d(\xi)}\right),\quad t\ge x\ge x_0.
\end{equation}

Let us write \eqref{5.44} in a different way:
$$\sqrt{\frac{d(x)}{d(t)}}\exp\left(-\frac{2\varepsilon}{\varepsilon}\int_x^t\frac{d\xi}{d(\xi)}\right)\le\frac{\mu(t)}{\mu(x)}
\sqrt{\frac{d(t)}{d(x)}}\le\sqrt{\frac{d(x)}{d(t)}}
\exp\left(\frac{2\varepsilon}{3}\int_x^t\frac{d\xi}{d(\xi)}\right),\quad t\ge x\ge x_0.$$
We now combine the latter estimates with inequalities \eqref{4.8} written for $\frac{2\varepsilon}{3}$ instead of $\varepsilon:$
$$ c\left(\frac{2\varepsilon}{3}\right)^{-1/2}\exp\left(-\frac{2\varepsilon}{3}\int_x^t\frac{d\xi}{d(\xi)}\right)\le\sqrt{ \frac{d(x)}{d(t)}}
\le c\left(\frac{2\varepsilon}{3}\right)^{1/2}\exp\left(\frac{\varepsilon}{3}\int_x^t\frac{d\xi}{d(\xi)}\right).
$$
We easily obtain that for $t\ge x\ge x_0$ we have the inequalities
$$c\left(\frac{2}{3}\varepsilon\right)^{-1/2}\exp\left(-\varepsilon\int_x^t\frac{d\xi}{d(\xi)}\right)\le\frac{\mu(t)}{\mu(x)}
 \sqrt{\frac{d(t)}{d(x)}}\le c\left(\frac{2\varepsilon}{3}\right)^{1/2}\exp\left(\varepsilon\int_x^t\frac{d\xi}{d(\xi)}\right),$$
as required. The cases $x\ge t\ge x_0$ and the cases $t\le x\le-x_0,$ $x\le t\le-x_0$ are considered in a similar way.
We then continue the proof as in \lemref{lem4.5}, with obvious modifications, similar to those presented above.
\end{proof}

\section{Example}

In this final section, we consider equation \eqref{1.1} with
\begin{equation}\label{6.1}
q(x)=\frac{1}{\sqrt{1+x^2}}+\frac{\cos(e^{|x|})}{\sqrt{1+x^2}},\qquad x\in\mathbb R.
\end{equation}
Using the results obtained above, we show that the following assertions hold:

\begin{enumerate}
\item[A)] Equation \eqref{1.1} in the case of \eqref{6.1} is not correctly solvable in $L_p,$ for any $p\in[1,\infty);$
\item[B)] For equation \eqref{1.1} in the case of \eqref{6.1}, for any $p\in[1,\infty)$, the following pair of spaces $\{L_{p,\mu};L_{p,\theta}\}$ is admissible, where
  \begin{equation}\label{6.2}
\mu(x)=\frac{1}{\sqrt{1+x^2}\ln(2+x^2)},\qquad \theta(x)=\frac{1}{ \ln(2+x^2)},\qquad x\in\mathbb R.
\end{equation}
\end{enumerate}

\begin{remark*}
Below we present an algorithm for the study of \eqref{1.1} for a given pair of spaces (cases \eqref{6.1} and $\{L_p,L_p\}$ and $\{L_{p,\mu};L_{p,\theta}\}$ in the case of \eqref{6.2}). We do not consider the question of the description of all pairs of spaces admissible for \eqref{1.1} in the case of \eqref{6.1}.
\end{remark*}

For the reader's convenience, we enumerate the main steps of the proof of assertions A) and B). Note that since the functions in \eqref{6.1} and \eqref{6.2} are even, all proofs are only given for $x\in[0,\infty)$ or for  $x\in[x_0,\infty),$ $x_0\gg1$.

\noindent 1) \textit{Checking condition \eqref{2.1}}.

Let us check that in the case of \eqref{6.1} condition \eqref{2.1} holds. Assume to the contrary that there is $x_0\in\mathbb R $ such that
 \begin{equation}\label{6.3}
\int_{x_0}^\infty q(t)dt=0.
\end{equation}
The function $q$ in \eqref{6.1} is continuous and non-negative. Therefore, from \eqref{6.3} it follows that $q(t)\equiv0$ for $t\in[x_0,\infty)$ which is obviously false. This contradiction implies \eqref{2.1}.

\noindent 2) \textit{Existence of the function $d(x), $ $x\in\mathbb R$, and its estimates}.

From 1) and \lemref{lem2.1}, it follows that the function $d(x)$ is defined for all $x\in\mathbb R.$ To obtain its estimates, we use \thmref{thm3.7}. Denote (see \eqref{3.7} and \eqref{3.8})
\begin{gather}
q_1(x)=\frac{1}{\sqrt{1+x^2}};\qquad q_2(x)=\frac{\cos(e^{|x|})}{\sqrt{1+x^2}},\qquad x\in\mathbb R;\label{6.4}\\
A(x)=\left[0,2\sqrt[4]{1+x^2}\right];\qquad \omega(x)=\left[x-2\sqrt[4]{1+x^2},x+2\sqrt[4]{1+x^2}\right],\qquad x\in\mathbb R.\label{6.5}
\end{gather}

Let us check \eqref{3.11} for the function $\varkappa_1$ (see \eqref{3.9}):
\begin{align}
\varkappa_1(x)&=\frac{1}{q_1(x)^{3/2}}\sup_{t\in A(x)}\left|\int_{x-t}^{x+t}q_1''(\xi)d\xi\right|
=(1+x^2)^{3/4}\sup_{t\in A(x)}\left|\int_{x-t}^{x+t}\left(\frac{1}{\sqrt{1+\xi^2}}\right)''d\xi\right|\nonumber\\
&=(1+x^2)^{3/4}\sup_{t\in A(x)}\left|\int_{x-t}^{x+t}\frac{1-2\xi^2}{1+\xi^2}\cdot \frac{d\xi}{(1+\xi^2)^{3/2}}\right|.\label{6.6}
\end{align}
Note the obvious inequalities
 \begin{equation}\label{6.7}
\left|\frac{1-2\xi^2}{1+\xi^2}\right|\le\frac{1+2\xi^2}{1+\xi^2}\le 2,\qquad \xi\in\mathbb R.
\end{equation}

In addition, for $\xi\in A(x),$ $x\gg1,$ we have
\begin{align}
\frac{1+\xi^2}{1+x^2}&\le 1+\frac{|\xi-x|\, |\xi+x|}{1+x^2}\le 1+c\frac{x^{3/2}}{1+x^2}\le 2;\label{6.8}\\
\frac{1+\xi^2}{1+x^2}&\ge 1-\frac{|\xi-x|\, |\xi+x|}{1+x^2}\ge 1 -c\frac{x^{3/2}}{1+x^2}\ge \frac{1}{2}.\label{6.9}
\end{align}

{}From \eqref{6.6}, \eqref{6.7}, \eqref{6.8} and \eqref{6.9}, it now follows that
\begin{align*}
\varkappa_1(x)&\le (1+x^2)^{3/4}\sup_{t\in A(x)}\left[\int_{x-t}^{x+t}\left|\frac{1-2\xi^2}{1+\xi^2}\right|\frac{1}{(1+x^2)^{3/2}}\left(\frac{1+x^2}{1+\xi^2}\right)^{3/2}d\xi\right]\\
&\le c\frac{(1+x^2)^{3/4}}{(1+x^2)^{3/2}}\sup_{t\in A(x)}\left|\int_{x-t}^{x+t}1d\xi\right|=\frac{c}{\sqrt{1+x^2}}\to 0,\quad x\to\infty.
\end{align*}

Let us now check \eqref{3.11} for $\varkappa_2(x)$, $x\gg1.$ First show that for $x\gg1$ we have the inequality (see \eqref{6.5}):
 \begin{equation}\label{6.10}
 \sup_{[\alpha,\beta]\subseteq\omega(x)}\left|\int_\alpha^\beta\frac{\cos e^t}{\sqrt{1+t^2}}\right|\le c\frac{e^{-x/2}}{\sqrt{1+x^2}},\qquad x\gg 1.
 \end{equation}

We need the following simple assertions, given without proof:
\begin{enumerate}
\item [a)] $x-\sqrt[4]{1+x^2}\to\infty$ as $x\to\infty;$
\item[b)] the function $\varphi(\xi)$ where
$$\varphi(\xi)=\frac{e^{-\xi}}{\sqrt{1+\xi^2}},\qquad\xi\in\mathbb R$$
is monotone decreasing for all $\xi\in\mathbb R.$
\end{enumerate}

Let $t$ be any point in the interval $(\alpha,\beta).$ Below we use assertions a), b) and the second mean theorem   (see  \cite{15}):
\begin{align}
 \sup_{[\alpha,\beta]\subseteq \omega(x)}&\left|\int_\alpha^\beta\frac{\cos e^\xi}{\sqrt{1+\xi^2}}d\xi\right|
 =\sup_{[\alpha,\beta]\subseteq\omega(x)}\left|\int_\alpha^\beta\frac{e^{-\xi}}{\sqrt{1+\xi^2}}
 (e^\xi cos e^\xi)d\xi\right|\nonumber\\
 &=\sup_{[\alpha,\beta]\subseteq\omega(x)}\frac{e^{-\alpha}}{\sqrt{1+\alpha^2}}\left|\int_\alpha^t e^\xi\cos e^\xi d\xi\right|\le c\left.\frac{c^{-\xi}}{\sqrt{1+\xi^2}}\right|_{\xi=x-2\sqrt[4]{1+x^2}}\le c\frac{e^{-x/2}}{\sqrt{1+x^2}}.\label{6.12}
\end{align}

Now, from \eqref{6.12} for $x\gg1$ we obtain
\begin{align*}
\varkappa_2(x)&=\frac{1}{\sqrt{q_1(x)}}\sup_{t\in A(x)}\left|\int_{x-t}^{x+t}q_2(\xi)d\xi\right|=\sqrt[4]{1+x^2}\sup_{t\in A(x)}\left|\int_{x-t}^{x+t}\frac{\cos e^\xi}{\sqrt{1+\xi^2}}d\xi\right|\\
&\le \sqrt[4]{1+x^2}\sup_
{[\alpha,\beta]\subseteq \omega(x)}\left|\int_\alpha^\beta\frac{\cos e^\xi}{\sqrt{1+\xi^2}} d\xi\right|\le c
\frac{e^{-x/2}}{\sqrt{1+x^2}}\quad\Rightarrow \quad\eqref{3.11}.
\end{align*}

Since \eqref{3.11} is proven, by \thmref{thm3.7} we obtain
\begin{gather}
d(x)=\sqrt[4]{1+x^2}(1+\varepsilon(x)),\qquad |\varepsilon(x)|\le 2(\varkappa_1(x)+\varkappa_2(x)),\quad |x|\gg 1,\label{6.13}\\
c^{-1}\sqrt[4]{1+x^2}\le d(x)\le c \sqrt[4]{1+x^2},\qquad x\in\mathbb R.\label{6.14}
\end{gather}

\noindent 3) \textit{Proof of assertion A)}.

{}From \eqref{6.14}, it follows that $d_0=\infty$ (see \eqref{2.3} and \eqref{2.14}). It remains to refer to \thmref{thm2.6}. \hfil\quad $\square$

Let us now go to assertion B).

\noindent 4) \textit{Checking the inclusion $q\in H.$}

To prove \eqref{4.6}, we need estimates of $\tau_1(x)$ and $\tau_2(x)$ for $x\gg 1$ where (see \eqref{6.1} and \eqref{6.4})
\begin{align}
\tau_1(x)&=\left|\int_0^{\sqrt 2d(x)}(q_1(x+t)-q_1(x-t))dt\right|;\label{6.15}\\
\tau_2(x)&=\left|\int_0^{\sqrt 2d(x)}(q_2(x+t)-q_2(x-t))dt\right|.\label{6.16}
\end{align}

To estimate $\tau_1(x)$, we use below \eqref{6.7}, \eqref{6.8}, \eqref{6.9} and \eqref{6.13}:
\begin{align}
\tau_1(x)&=\left|\int_0^{\sqrt 2d(x)}\left(\int_{x-t}^{x+t}q_1'(\xi)d\xi\right)dt\right|\le\sqrt 2d(x)\sup_{|t|\le\sqrt 2d(x)} \left|\int_{x-\xi}^{x+\xi}q_1'(t)dt\right|\nonumber\\
&\le c\sqrt[4]{1+x^2}\sup_{|\xi|\le 2\sqrt[t]{1+t^2}}\left|\int_{x-\xi}^{x+\xi}\frac{t}{\sqrt{1+t^2}}\cdot\frac{1+x^2}{1+t^2}\cdot\frac{dt}{1+x^2}\right|\nonumber\\
&\le c\frac{\sqrt[4]{1+x^2}}{1+x^2}\sup_{|t|\le 2\sqrt[4]{1+x^2}}|t|\le \frac{c}{\sqrt{1+x^2}},\qquad x\gg1.\label{6.17}
\end{align}

The estimate for $\tau_2(x),$ $x\gg1,$ follows from \eqref{6.10} and \eqref{6.1}:
\begin{align}
|\tau_2(x)|&\le \left|\int_0^{\sqrt 2d(x)}q_2(x+t)dt\right|+\left|\int_0^{\sqrt 2d(x)}q_2(x-t)dt\right|\nonumber\\
&=\left|\int_{x-\sqrt 2d(x)}^xq_2(\xi)d\xi\right|+\left|\int_x^{x+\sqrt 2d(x)}q_2(\xi)d\xi\right|\nonumber\\
&\le 2\sup_{[\alpha,\beta]\subseteq\omega(x)}\left|\int_\alpha^\beta\frac{\cos e^\xi}{1+\xi^2}d\xi\right|\le c\frac{e^{-x}}{\sqrt{1+x^2}},\quad x\gg 1.\label{6.18}
\end{align}
From \eqref{6.17}, \eqref{6.18} and \eqref{6.14}, we obtain \eqref{4.6}, and therefore $q\in H.$

\noindent 5. \textit{Checking that the weights $\mu(x)$ and $\theta(x)$ agree with the function $q.$}

Equalities \eqref{4.12} for the functions $\mu(x)$ and $1/\theta(x)$ (see \eqref{6.2}) are easily proved with the help of estimates \eqref{6.14}.

\noindent 6. \textit{Proof of assertion B).}

Below we use \thmref{thm4.7}. Let us check that in case \eqref{6.2} requirements \eqref{4.2} are satisfied.
Let $x_0\gg1.$ Then
\begin{align*}
\int_0^\infty \mu(t)dt&=\int_0^\infty\frac{dt}{\sqrt{1+t^2}\ln(2+t^2)}\ge\int_{x_0}^\infty\frac{1}{t\sqrt{1+t^{-2}}}\cdot \frac{dt}{2\ln t+\ln(1+2t^{-2})}\\
&\ge c^{-1}\int_{x_0}^\infty\frac{dt}{t\ln t}=\infty\quad\Rightarrow\quad \eqref{4.2}.
\end{align*}
Since the weights $\mu$ and $\theta$ agree with the function $q,$ and one has the relations (see assertion \eqref{6.14}):
$$m(q,\mu,\theta)=\sup_{x\in\mathbb R}\left(\frac{\mu(x)}{\theta(x)}d^2(x)\right)\le c\sup_{x\in\mathbb R}\left(\frac{\mu(x)}{\theta(x)}\sqrt{1+x^2}\right)=c<\infty,$$
assertion B) follows from \thmref{thm4.7}.

\hfill\quad\qed

\end{document}